\definecolor{bred}{rgb}{0.8,0,0}
\newcommand{\tn}{\bar{\theta}^\lambda_n}
\newcommand{\E}{\mathbb{E}}
\newcommand{\hl}{h_\lambda}
\newcommand{\pt}{\hat{\pi}_t}
\newcommand{\pkl}{\hat{\pi}_{k\lambda}}
\newcommand{\fin}{\phi^{-1}(z)}
\newcommand{\ptk}{\hat{\pi}_{\theta_t|\theta_{k\lambda}}}
\newcommand{\acu}{{AC\cup U}}
\newcommand{\aref}[1]{$\mathbf{A}$\ref{#1}}
\newcommand{\bref}[1]{$\mathbf{C}$\ref{#1}}
\newcommand{\lref}[1]{$\mathbf{B}$\ref{#1}}
\date{August 2022}
\begin{document}

\title{Taming under isoperimetry}
\author[1,4]{Iosif Lytras}
\author[1,2,3]{Sotirios Sabanis}

\affil[1]{\footnotesize{School of Mathematics, The University of Edinburgh, Edinburgh, UK}}
\affil[2]{ \footnotesize{The Alan Turing Institute, London, UK.}}
\affil[3]{ \footnotesize{ National Technical University of Athens, Athens, Greece.}}
\affil[4]{ \footnotesize{Archimedes/Athena RC, Athens, Greece}}
\date{November 2023}
\maketitle
\begin{abstract}
    In this article we propose a novel taming Langevin-based scheme called $\mathbf{sTULA}$ to sample from distributions with superlinearly growing log-gradient which also satisfy a Log-Sobolev inequality. We derive non-asymptotic convergence bounds in $KL$ and consequently total variation and Wasserstein-$2$ distance from the target measure. Non-asymptotic convergence guarantees are provided for the performance of the new algorithm as an optimizer. Finally, some theoretical results on isoperimertic inequalities for distributions with superlinearly growing gradients are provided. Key findings are a Log-Sobolev inequality with constant independent of the dimension, in the presence of a higher order regularization and a Poincare inequality with constant independent of temperature and dimension under a novel non-convex theoretical framework.
\end{abstract}

\section{Introduction}
We consider a non-convex stochastic optimization problem
$$
\operatorname{minimize} u(\theta):=\mathbb{E}[f(\theta, X)]
$$
where $\theta \in \mathbb{R}^d$ and $X$ is a random element. We aim to build an estimate $\hat{\theta}$ such that the expected excess risk $\mathbb{E}[u(\hat{\theta})]-\inf _{\theta \in \mathbb{R}^d} u(\theta)$ is minimized.
It is well known that for large value of $\beta$, the measure $\pi_\beta(x) = \frac{e^{-\beta u(x) }}{\int_{\mathbb{R}^d}e^{-\beta u(x)}dx}$ is concentrated around the minimizers of $u$ therefore, if one decomposes the excess risk problem as 
\begin{equation}\label{eq-optimization problem}
    \mathbb{E}[u(\hat{\theta})]-\inf _{\theta \in \mathbb{R}^d} u(\theta)=\mathbb{E}[\underbrace{u(\hat{\theta})]-\E[u(\theta_\infty)]}_{T_1}+\underbrace{\E[u(\theta_\infty)]-\inf _{\theta \in \mathbb{R}^d} u(\theta)}_{T_2}
\end{equation}
where $\theta_\infty$ is distributed according to $\pi_\beta$.
It is well known that,  $T_2$ becomes small when the temperature parameter $\beta>0$ is large, and thus the main effort is in the direction of  minimizing $T_1$.
This can be achieved by building a chain based on a Langevin sampling algorithm to sample from the distribution $\pi_\beta.$ The non-asymptotic sampling behaviour of this algorithm is the main focus of this article.\\
Sampling from a high dimensional distribution using Langevin-based algorithms has been a topic of interest in many fields such as Bayesian statistics and machine learning.
The Langevin-based sampling relies on the notion that under mild conditions, the Langevin SDE
\begin{equation}
 X_0=\theta_0, \quad   dX_t=-\nabla u(X_t) +\sqrt{\frac{2}{\beta}}dB_t
\end{equation}
admits $\pi_\beta$ as an invariant measure.
One popular approach is to consider the Unadjusted Langevin Algorithm (ULA) which corresponds to the respective Euler-Maruyama discretization scheme given as
\[\theta^{ULA}_0=\theta_0, \quad \theta^{ULA,\lambda}_{n+1}=\theta^{ULA,\lambda}_{n}-\lambda h(\theta^{ULA,\lambda}_{n}) + \sqrt{\frac{2\lambda}{\beta}}\xi_{n+1}\]
where $\{\xi\}_n$ is a sequence of $d$-dimensional Gaussian random variables and $\lambda>0$ is the step-size of the algorithm and $h:=\nabla u$.
There have been a lot of work in providing non-asymptotic results for ULA under various assumptions such as Lipschitz continuity of $h$ and convexity of $u$. Under the assumption of convexity and gradient Lispchitz continuity important results are obtained in \citet{dalalyan2017theoretical, durmus2017nonasymptotic, durmus2019high, hola, convex}, while in the non-convex case, under convexity at infinity or dissipativity assumptions, one may consult \citet{berkeley} \citet{majka2020nonasymptotic}, \citet{erdogdu2022convergence} for ULA while for the Stochastic Gradient variant (SGLD) important works are \citet{raginsky},\citet{nonconvex} \citet{zhang2023nonasymptotic}.\\ More recently, starting with the work of \citet{vempala2019rapid} important estimates have been obtained under the assumption that the target measure $\pi_\beta$ satisfies an isoperimetric inequality and the gradient of $u$ satisfies a global Lipschitz continuity, \citet{mou2022improved}, \citet{ balasubramanian2022towards}.
The latter assumption has also been relaxed to a weakly smooth assumption (with the gradient still satisfying a linear growth property), see \citet{nguyen2021unadjusted} and \citet{erdogdu2021convergence}.\\
For gradients satisfying a superlinear growth condition different techniques need to be explored. The reason for this is when the drift coefficient has superlinear growth, the Euler Marauyama scheme (which is the basis for ULA) diverges in the strong sense. That corresponding result can be found in \citet{hutzenthaler2011} where it is proven that the difference of the exact solution of the corresponding stochastic differential equation (SDE) and of the numerical approximation at even a finite time point, diverges to infinity in the strong mean square sense. This inspired the use of taming technology for the approximation of such SDEs which was introduced in \citet{hutzenthaler2012} and subsequently by using the so-called Euler-Krylov approximations in  \citet{tamed-euler,SabanisAoAP} to address this issue. Naturally, this has led to the development of tamed Langevin-based sampling algorithms in \citet{tula}, \citet{hola}, \citet{johnston2023kinetic} under a  strong convexity assumption and in the non-convex setting in \citet{TUSLA}, \citet{lim2021polygonal} and \citet{neufeld2022non}.\\
In this article we propose a novel taming scheme called $\mathbf{sTULA}$ and provide non-asymptotic convergence bounds in $KL$ and in total variation and Wasserstein-$2$ distance from the target measure $\pi_\beta$, assuming polynomial local Lipschitz continuity for the gradient and a Log-Sobolev inequality for $\pi_\beta.$ Using this result, we produce non-asymptotic guarantees for the solution of the excess risk optimization problem. Finally, we provide some new results where Poincare or Log-Sobolev inequalities are derived under novel theoretical frameworks. More specifically, Theorem \ref{lemma-comparisonying} offers additional insight in cases where a certain convexity at infinity condition is met, where the gradient is allowed to grow polynomially at infinity. In particular, Corollary \ref{cor-reg} deals with the cases where high order regularization is added, which is quite important in many practical applications such as the fine tuning of Neural Networks (see \cite{TUSLA}, \cite{lim2021non}), deriving Log-Sobolev inequality with constant independent of the dimension.\\ Another direction explored is the derivation of a novel theoretical framework beyond any convexity assumption, producing a Poincare inequality with constant which doesn't depend explicitly on the temperature and dimension, see Theorem \ref{Poincare constant general} and under additional assumptions, a Log Sobolev inequality with constant depending polynomially on the temperature parameter and the dimension. These new results along with the novel techniques used could possibly enhance the undertanding of isoperimetric inequalities in different scenarios stemming from practical applications and pave the way for many interesting findings.\\ 
\textbf{Notation.} We conclude this Section by introducing some notation. The Euclidean norm of a vector $b \in \mathbb{R}^d$, the spectral norm and the Frobenius norm of a matrix $\sigma \in \mathbb{R}^{d \times m}$ are denoted by $|b|,||A||$ and $||A||_{\mathrm{F}}$ respectively. $A^{\top}$ is the transpose matrix of $A$. Let $f: \mathbb{R}^d \rightarrow \mathbb{R}$ be a twice continuously differentiable function. Denote by $\nabla f, \nabla^2 f$ and $\Delta f$ the gradient of $f$, the Hessian of $f$ and the Laplacian of $f$ respectively. We denote the $i-th$ order Jacobian of an $i$ times differentiable function $f: \mathbb{R}^d \rightarrow \mathbb{R}$ as $J^{(i)}(f).$ We also denote $\mathcal{H}^k$ the usual Sobolev space.
$\|\cdot\|_V$ is the total variation denoted by $\|\cdot\|_{T V}$. Let $\mu$ and $\nu$ be two probability measures on a state space $\Omega$ with a given $\sigma$-algebra. If $\mu \ll \nu$, we denote by $d \mu / d \nu$ the Radon-Nikodym derivative of $\mu$ w.r.t. $\nu$. Then, the Kullback-Leibler divergence of $\mu$ w.r.t. $\nu$ is given by
$$
{H}_\nu(\mu)=\int_{\Omega} \frac{d \mu}{d \nu} \log \left(\frac{d \mu}{d \nu}\right) d \nu .
$$
We say that $\zeta$ is a transference plan of $\mu$ and $\nu$ if it is a probability measure on\\ $\left(\mathbb{R}^d \times \mathbb{R}^d, \mathcal{B}\left(\mathbb{R}^d\right) \times \mathcal{B}\left(\mathbb{R}^d\right)\right)$ such that for any Borel set $A$ of $\mathbb{R}^d, \zeta\left(A \times \mathbb{R}^d\right)=\mu(A)$
and $\zeta\left(\mathbb{R}^d \times A\right)=\nu(A)$. We denote by $\Pi(\mu, \nu)$ the set of transference plans of $\mu$ and $\nu$. Furthermore, we say that a couple of $\mathbb{R}^d$-valued random variables $(X, Y)$ is a coupling of $\mu$ and $\nu$ if there exists $\zeta \in \Pi(\mu, \nu)$ such that $(X, Y)$ is distributed according to $\zeta$. For two probability measures $\mu$ and $\nu$, the Wasserstein distance of order $p \geq 1$ is defined as
$$
W_p(\mu, \nu)=\left(\inf _{\zeta \in \Pi(\mu, \nu)} \int_{\mathbb{R}^d \times \mathbb{R}^d}|x-y|^p d \zeta(x, y)\right)^{1 / p}.
$$

\section{Theoretical Framework}
In the following definitions we assume that $\pi(x):=\pi_\beta(x) = \frac{e^{-\beta u(x) }}{\int_{\mathbb{R}^d}e^{-\beta u(x)}dx}$ is a Gibbs measure associated with the Langevin SDE
\begin{equation}
    dX_t=-\nabla u(X_t)dt +\sqrt{\frac{2}{\beta}}dB_t.
\end{equation}
The generator of the SDE is given by $Lf=\frac{1}{\beta}\Delta f -\langle \nabla f,\nabla u\rangle  $, $f \in \mathcal{H}^2$ and the carr\'{e} du champ operator $\Gamma(f,g)=\frac{1}{\beta}\langle \nabla f,\nabla g\rangle $, $f,g\in \mathcal{H}^1$
\subsection{Definitions and key isoperimetric theorems}
\newtheorem{Def1}{Definition}[section]
\begin{Def1}[Poincaré Inequality] \label{Poincarédef}
 A Gibbs probability measure $\pi_\beta$  satisfies the Poincar\'{e} inequality with constant $\varrho>0$, if for all test functions $f \in H^1(\mu)$
$$
(\mathrm{PI}(\varrho)) \quad \operatorname{var}_{\pi_\beta}(f):=\int_{\mathbb{R}^d}\left(f-\int_{\mathbb{R}^d} f \mathrm{~d} \pi_\beta\right)^2 \mathrm{~d} \mu \leq \frac{1}{\varrho} \frac{1}{\beta} \int|\nabla f|^2 \mathrm{~d} \pi_\beta
$$
\end{Def1}
\newtheorem{remark}[Def1]{Remark}
\newtheorem{ass}{A}
\newtheorem{assB}{B}
\newtheorem{assC}{C}
\newtheorem{theorem}[Def1]{Theorem}
\newtheorem{lemma}[Def1]{Lemma}
\newtheorem{corollary}[Def1]{Corollary}
\newtheorem{Def}[Def1]{Definition}
\newtheorem{proposition}[Def1]{Proposition}

\begin{Def}[LSI] \label{LSIdef}
A Gibbs probability measure $\pi_\beta$ satisfies the logarithmic Sobolev inequality with constant $\alpha>0$, denoted $\operatorname{LSI}(\alpha)$, if for all probability measures $\nu$ such that $\nu \ll \pi_\beta$ $f:=\frac{d \nu}{d \pi_\beta} \in H^1(\mathbb{R}^d)$
$$
(\mathrm{LSI(\alpha)}) \quad
H_{\pi_\beta}(\nu):=\int_{\mathbb{R}^d} f\log f d \pi_\beta \leq \frac{1}{2 \alpha} \int_{\mathbb{R}^d} \frac{\Gamma(f,f)}{f} d \pi_\beta:= \frac{1}{2 \alpha} I_{\pi_\beta}(\nu),
$$
\end{Def}
\begin{Def}[Talagrand inequality]
    A probability measure $\mu$ is then said to satisfy the transportation-entropy inequality ( Talagrand) $W_2 H(C)$ where $C>0$ is some constant, if for all probability measure $\nu$
$$ (W_2 H(C))\quad
W_2(\nu, \mu) \leq \sqrt{2 C H_\mu(\nu)}
$$
\end{Def}
\begin{theorem}[Otto-Villani]\label{theo-Talagrand}
    Suppose that a probability measure $\mu$ satisfies LSI($a$). Then, it also satisfies a Talagrand inequality with the same constant i.e $W_2H(\frac{1}{a})$
\end{theorem}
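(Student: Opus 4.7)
I would follow the Hamilton-Jacobi semigroup argument of Bobkov-Gentil-Ledoux, which derives Otto-Villani directly from LSI via monotonicity of a single Laplace functional and avoids any use of the HWI inequality or the Wasserstein geometry of displacement interpolation. The argument rests on two variational representations. The first is Kantorovich-Rubinstein duality for the quadratic cost,
$$\tfrac{1}{2} W_2^2(\nu,\mu) = \sup_{f \in C_b(\mathbb{R}^d)}\left\{\int Q_1 f\, d\nu - \int f\, d\mu\right\},$$
where $Q_t f(x) := \inf_y\{f(y) + |x-y|^2/(2t)\}$ is the Hopf-Lax (infimal-convolution) semigroup. The second is the Donsker-Varadhan dual formula for relative entropy: for every $\nu \ll \mu$ and bounded measurable $g$, $\int g\, d\nu \leq H_\mu(\nu) + \log \int e^g\, d\mu$.

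\emph{Reduction to a hypercontractive estimate.} Applying the entropy inequality with $g = c Q_1 f$ for a constant $c > 0$ to be fixed, dividing by $c$, taking the supremum over $f$, and combining with Kantorovich duality, one obtains
$$\tfrac{1}{2} W_2^2(\nu,\mu) \leq \tfrac{1}{c} H_\mu(\nu) + \tfrac{1}{c}\sup_f\left[\log \int e^{c Q_1 f}\, d\mu - c \int f\, d\mu\right].$$
Hence Talagrand with constant $1/c$ reduces to the hypercontractive inequality
$$\log \int e^{c Q_1 f}\, d\mu \leq c \int f\, d\mu \qquad \text{for every bounded } f. \qquad (\star)$$

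\emph{Monotonicity via Hamilton-Jacobi.} To prove $(\star)$, define $\Phi(t) := \tfrac{1}{\lambda(t)}\log \int e^{\lambda(t) Q_t f}\, d\mu$ with $\lambda(t) := c t$ on $t \in (0,1]$, continuously extended to $\Phi(0) = \int f\, d\mu$. Using $\partial_t Q_t f = -\tfrac{1}{2}|\nabla Q_t f|^2$ together with the identity $\log \int e^{\lambda Q_t f}\, d\mu - \lambda \int Q_t f\, \rho_t\, d\mu = -H_\mu(\rho_t d\mu)$ — where $\rho_t := e^{\lambda(t) Q_t f}/\int e^{\lambda(t) Q_t f}\, d\mu$ is the tilted density — a direct differentiation gives
$$\Phi'(t) = \tfrac{\lambda'(t)}{\lambda(t)^2} H_\mu(\rho_t d\mu) - \tfrac{1}{2}\int |\nabla Q_t f|^2 \rho_t\, d\mu.$$
Applying LSI($a$) to $\rho_t$ and noting that $\nabla \rho_t = \lambda(t) \rho_t \nabla Q_t f$, so that $\Gamma(\rho_t,\rho_t)/\rho_t = \lambda(t)^2 \rho_t |\nabla Q_t f|^2/\beta$, yields $H_\mu(\rho_t d\mu) \leq \tfrac{\lambda(t)^2}{2a\beta}\int |\nabla Q_t f|^2 \rho_t\, d\mu$, whence
$$\Phi'(t) \leq \tfrac{1}{2}\left(\tfrac{\lambda'(t)}{a\beta} - 1\right)\int |\nabla Q_t f|^2 \rho_t\, d\mu.$$
The choice $\lambda'(t) = c$ with $c \leq a\beta$ (absorbing the carré du champ factor $\beta$ into the LSI constant to recover the claimed constant $1/a$) forces $\Phi'(t) \leq 0$, so $\Phi(1) \leq \Phi(0)$, which is exactly $(\star)$.

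\emph{Main obstacle.} The central technical difficulty is the low regularity of $Q_t f$: for merely bounded $f$ it is semiconcave (hence Lipschitz) but not $C^1$, so the Hamilton-Jacobi PDE holds only Lebesgue-a.e.\ and the tilted density $\rho_t$ is not a priori in the Sobolev class to which LSI applies verbatim. I would handle this by first establishing $(\star)$ for smooth Lipschitz $f$ (e.g., for mollifications $f * \eta_\varepsilon$), where every computation is classical, and then pass to the limit $\varepsilon \to 0$ via monotone convergence $Q_t(f * \eta_\varepsilon) \to Q_t f$ together with dominated convergence on both sides of $(\star)$.
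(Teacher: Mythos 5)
The paper states Otto--Villani as a known classical result and does not supply a proof of its own, so there is no in-paper argument to compare against. Your Bobkov--Gentil--Ledoux route via the Hopf--Lax semigroup is a correct and genuinely different argument from the original Otto--Villani proof (which goes through displacement interpolation and the HWI inequality). The reduction via Kantorovich duality plus the Donsker--Varadhan dual entropy formula, the computation of $\Phi'(t)$, and the application of LSI to the tilted density $\rho_t$ are all sound. You also correctly flag that the paper's carr\'e du champ carries a $1/\beta$ factor, which --- tracked strictly --- yields Talagrand constant $1/(a\beta)$ in the standard convention rather than $1/a$; but the paper itself uses the Talagrand constant inconsistently in the proof of Corollary \ref{W2 rate} (it writes $\sqrt{2 C_{LSI} H_{\pi_\beta}(\cdot)}$ where the theorem as stated would give $\sqrt{(2/C_{LSI}) H_{\pi_\beta}(\cdot)}$), so this mismatch is the paper's and not yours.

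The one substantive gap is in your regularity paragraph. Mollifying $f$ does not make $Q_t f$ smooth: even if $f\in C^\infty$ has bounded gradient, the Hopf--Lax operator $Q_t f(x)=\inf_y\{f(y)+|x-y|^2/(2t)\}$ develops genuine kinks once $t$ is large enough that $y\mapsto y+t\nabla f(y)$ fails to be a diffeomorphism, and in general this already happens for $t<1$ regardless of the mollification parameter --- indeed, mollification tends to \emph{increase} $\|\nabla^2 f\|_\infty$, making the critical $t$ smaller. So the claim that for $f*\eta_\varepsilon$ ``every computation is classical'' does not hold, and the approximation scheme as described does not reduce the problem to a smooth case. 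Also, the convergence $Q_t(f*\eta_\varepsilon)\to Q_t f$ is not monotone in $\varepsilon$ in general (it \emph{is} uniform, since $Q_t$ is non-expansive in the sup-norm, which is enough). The standard fix, following BGL, is to work directly with the semiconcave $Q_t f$: Rademacher's theorem gives $\nabla Q_t f$ a.e., the Hamilton--Jacobi inequality $\partial_t Q_t f+\tfrac12|\nabla Q_t f|^2\le 0$ holds a.e.\ in $(t,x)$, and $t\mapsto\Phi(t)$ is Lipschitz (hence absolutely continuous) by the uniform Lipschitz bound on $Q_t f$, so the pointwise differential inequality $\Phi'(t)\le 0$ a.e.\ integrates to $\Phi(1)\le\Phi(0)$. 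Alternatively one may replace $Q_t$ by the viscous (Hopf--Cole) regularization $Q_t^\varepsilon f=-\varepsilon\log(G_{2\varepsilon t}*e^{-f/\varepsilon})$, which \emph{is} smooth for all $t>0$, solves the viscous Hamilton--Jacobi equation exactly, and converges to $Q_t f$ as $\varepsilon\to0$; this is a different regularization from mollifying $f$ and it does close the gap.
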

\begin{theorem}[Bakry-Emery criterion, \citet{bakry2006diffusions}]\label{Bakry-Emery}
Let $U: D \rightarrow \mathbb{R}$ be a potential associated with Gibbs measure and normalizing constant $Z_\mu$,
$$
\mu_\beta(\mathrm{d} x)=Z_\mu^{-1} \exp \left(-\beta U(x)\right) \mathrm{d} x
$$
on a convex domain $D\subset \mathbb{R}^d$ and assume that there exists $m>0$ such that $\nabla^2 U(x) \geq m I_d$ for all $x \in \mathbb{R}^n$. Then $\mu$ satisfies $\mathrm{PI}(\varrho)$ and $\operatorname{LSI}(\alpha)$ with
$$
\varrho \geq m \quad \text { and } \quad \alpha \geq m.
$$
\end{theorem}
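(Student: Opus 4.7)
The plan is to prove this via the standard Bakry--Emery $\Gamma_2$-calculus applied to the diffusion semigroup $(P_t)_{t\geq 0}$ generated by $L f = \tfrac{1}{\beta}\Delta f - \langle \nabla f,\nabla U\rangle$ (with $u$ replaced by $U$), which is reversible with respect to $\mu_\beta$. First I would verify the curvature condition. Using the Bochner--Weitzenb\"ock identity, a direct computation gives
\begin{equation*}
\Gamma_2(f,f) := \tfrac{1}{2}\bigl(L\Gamma(f,f)-2\Gamma(f,Lf)\bigr) = \tfrac{1}{\beta^2}\|\nabla^2 f\|_{\mathrm{F}}^2 + \tfrac{1}{\beta}\langle \nabla f,\nabla^2 U\,\nabla f\rangle.
\end{equation*}
The hypothesis $\nabla^2 U\succeq m I_d$ then yields the $CD(m,\infty)$ inequality $\Gamma_2(f,f)\geq m\,\Gamma(f,f)$ pointwise for all sufficiently smooth $f$. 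Convexity of $D$ is what keeps this bound valid with no boundary contribution.

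Next I would establish the Poincar\'e inequality by dissipating the variance along $P_t$. Write $V(t):=\mathrm{Var}_{\pi_\beta}(P_t f)$. Differentiating and using reversibility and integration by parts gives $V'(t)=-2\int \Gamma(P_t f,P_tf)\,d\pi_\beta$. The $CD(m,\infty)$ estimate, after a standard commutation argument for $\Gamma(P_t f,P_t f)$ (the ``gradient bound'' $\Gamma(P_t f,P_t f)\leq e^{-2mt} P_t \Gamma(f,f)$, obtained by differentiating $s\mapsto P_s\Gamma(P_{t-s}f,P_{t-s}f)$ and invoking $\Gamma_2\geq m\Gamma$), then implies $V'(t)\leq -2m V(t)$, equivalently $V(t)\leq e^{-2mt} V(0)$. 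Integrating $-V'(t)=2\int \Gamma(P_t f,P_t f)\,d\pi_\beta$ from $0$ to $\infty$ and using $V(\infty)=0$ produces $\mathrm{Var}_{\pi_\beta}(f)\leq \tfrac{1}{m}\int \Gamma(f,f)\,d\pi_\beta$, i.e.\ $\mathrm{PI}(m)$ in the normalization of Definition~\ref{Poincarédef}.

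For the logarithmic Sobolev inequality I would run the same scheme at the level of entropy. With $f = d\nu/d\pi_\beta$ and $\Phi(t):=H_{\pi_\beta}(P_t f\cdot \pi_\beta)=\int P_t f \log P_t f\,d\pi_\beta$, integration by parts gives $\Phi'(t)=-I_{\pi_\beta}(P_t f\cdot \pi_\beta)$. The curvature bound upgrades to the stronger pointwise ``log-Sobolev gradient estimate'' $\Gamma(P_t f,P_t f)/P_t f \leq e^{-2mt} P_t(\Gamma(f,f)/f)$, derived again by differentiating the interpolation $s\mapsto P_s(\Gamma(P_{t-s}f,P_{t-s}f)/P_{t-s}f)$ and using $\Gamma_2\geq m \Gamma$ together with the elementary identity that makes the entropy version of this derivative nonnegative. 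Taking expectations and integrating yields $I_{\pi_\beta}(P_t f\cdot \pi_\beta)\leq e^{-2mt}I_{\pi_\beta}(\nu)$, so
\begin{equation*}
H_{\pi_\beta}(\nu) = -\int_0^\infty \Phi'(t)\,dt = \int_0^\infty I_{\pi_\beta}(P_tf\cdot\pi_\beta)\,dt \leq \tfrac{1}{2m}I_{\pi_\beta}(\nu),
\end{equation*}
which is $\mathrm{LSI}(m)$ as required; the factor of $\beta$ is absorbed into $\Gamma$, consistent with the paper's normalization where $U=u$ yields $\nabla^2(\beta U)\succeq \beta m I_d$.

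The main obstacle is technical rather than conceptual: justifying the interpolation/commutation computations requires working with smooth, rapidly decaying test functions (or truncations thereof), then extending by density to $H^1(\pi_\beta)$; on a proper convex domain $D\subsetneq \mathbb{R}^d$ one also needs the semigroup to be well defined with reflecting boundary conditions so that no boundary terms appear when integrating by parts against $\nabla^2 U\succeq mI_d$. Once regularization and a density argument are in place, the above three steps close the proof.
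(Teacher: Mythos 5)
The paper does not prove this statement; Theorem~\ref{Bakry-Emery} is stated as a background result and cited directly to \citet{bakry2006diffusions}, so there is no ``paper proof'' to compare against. Your reconstruction is the standard Bakry--\'Emery $\Gamma_2$-calculus argument and is correct: the computation of $\Gamma_2$ matches the paper's normalization (where $\Gamma(f,f)=\tfrac{1}{\beta}|\nabla f|^2$), the hypothesis $\nabla^2 U\succeq mI_d$ gives $CD(m,\infty)$, the semigroup-interpolation argument gives $\mathrm{Var}(f)\leq \tfrac{1}{m}\int\Gamma(f,f)\,d\pi_\beta$ and $H_{\pi_\beta}(\nu)\leq \tfrac{1}{2m}I_{\pi_\beta}(\nu)$, which are exactly $\mathrm{PI}(m)$ and $\mathrm{LSI}(m)$ in Definitions~\ref{Poincarédef} and~\ref{LSIdef}. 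You are also right to flag that the commutation/interpolation step relies on the diffusion (chain-rule) property of $L$, and that one needs regularization, a density argument in $H^1(\pi_\beta)$, and Neumann (reflecting) boundary conditions together with convexity of $D$ to rule out boundary terms; these are precisely the technical points one would have to spell out in a fully rigorous write-up, but the conceptual structure of the proof is sound.
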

\subsection{Assumptions}\label{Assumptions}
Let $u\in \mathcal{C}^4(\mathbb{R}^d) $ and $h:=\nabla u.$
The following set of Assumptions shapes the necessary framework to obtain our results.
\begin{ass}\label{ass-derivbound}
There exists $l>0$ such that \[\max\{|h(x)|,||J^{(i)}(h)(x)||\}\leq L(1+|x|^{2l}),\quad \forall x\in \mathbb{R}^d, \quad  i=1,2,3.\]
\end{ass}
\begin{ass}(Polynomial Lipschitz continuity)\label{ass-pol lip}
There exist $L', l' >0$  such that
\[|h(x)-h(y)|\leq L'(1+|x|+|y|)^{l'}|x-y| \quad \forall x,y \in \mathbb{R}^d.\]
\end{ass}
\begin{ass}(2-Dissipativity)\label{ass-2dissip}
There exist $a, b>0$ such that \[\langle h(x),x\rangle \geq a|x|^2-b \quad \forall x\in \mathbb{R}^d.\]
\end{ass}

\begin{ass} \label{ass-initial cond}
Let $\hat{\pi}_0$ be the initial distribution of the algorithm. Then,
$\hat{\pi}_0$ has exponential decay, $|\nabla \log \hat{\pi}_0|$ has polynomial growth and $||\nabla ^2 \log \hat{\pi}_0||$ has polynomial growth.
\end{ass}
\begin{assB}\label{ass-LSI}
The measure $\pi_\beta$ satisfies a Log-Sobolev inequality with a constant $C_{LSI}$.
\end{assB}
\newtheorem{remarkforb1}[Def1]{Remark}
\begin{remark}
Assumption \lref{ass-LSI} is a quite general assumptions and applies to many different scenarios. In a general non-convex setting this constant may have an exponential dependence on the dimension and $\beta.$
As shown in Theorem \ref{theo-LSI constant}, in order obtain an LSI with polynomial dependence on the temperature and dimension,  one can replace assumption \lref{ass-LSI} by the following assumptions:
\end{remark}

\begin{assC}\label{ass11}
There exists $K>0$ such that \[\nabla^2 u(x)\geq -KI_d \quad \forall x\in \mathbb{R}^d.\] where $I_d$ is the $d\times d$ identity matrix.
\end{assC}
\begin{assC} \label{ass-deltau}
    There exists $C'>0$ such that,
    \[||\nabla^2 u(x)||\leq C'(1+|h(x)|) \quad \forall x\in \mathbb{R}^d.\]
    This also implies
    \[\Delta u \leq 2 C'd(1+|h(x)|^2)\quad \forall x\in \mathbb{R}^d.\]
\end{assC}

\begin{assC}\label{ass-unique minimum}
$u$ has a unique local minimum.
\end{assC}
\begin{assC} \label{ass-Morse}
Let $\{\bar{\lambda}_i(\nabla^2 u)\}_{i=1}^d$ be the eigenvalues of the matrix $\nabla^2 u$.
There exists a constant $l^*>0$ such that
\[l^* \leq \inf \left\{\left|\bar{\lambda}_i\left(\nabla^2 u(y)\right)\right| \mid h(y)=0, i \in \{1,2,...,d\}\right\}.\]
Denoting by $S$ the set containing all saddle points and local maxima of $u$,
\[\sup_{x\in S} \bar{\lambda}_{min}(\nabla^2 u(x))\leq -l^*<0.\]
\end{assC}
\begin{remark}\label{remark about LSI}
    It is important to note that a Log-Sobolev inequality can  be deduced only from assumptions \bref{ass11},\aref{ass-2dissip} albeit with an exponential dependence on the temperature and the dimension.
\end{remark}

\section{Discussion about Log-Sobolev inequality}
LSI is widely used assumption for the target distribution of interest in the field of Langevin sampling since it implies concentration of measure and sub-Gaussian tails (\citet{Ledoux}). It was initially proved by Gross (\citet{gross1975logarithmic}) for the Gaussian measure and then extended by the Bakry-Emery theorem ( Theorem \ref{Bakry-Emery}) to the logconcave case. It was also extended to bounded petrubations via the Hooley-Strook petrubation theorem (\citet{holley1986logarithmic}). Moreover, it is preserved under contractions.
\\ Recently in the sampling temperature free (i.e setting $\beta=1$) literature, there have been numerous works
where isoperimetric inequalities are used to prove non-asymptotic convergence of a sampling algorithm to the target measure. In the works of \citet{vempala2019rapid} and \citet{mou2022improved} a differential inequality regarding $KL$ divergence between the sampling algorithm and the target measure was established under $LSI$. There have been a lot of important works towards this direction or under weaker functional inequalities such as modified Log Sobolev and Poincaré (\citet{chewi2021analysis}, \citet{nguyen2021unadjusted}, \citet{erdogdu2021convergence}).
When one uses sampling algorithms to solve the excess risk optimization problem, the presence of the large temperature $\beta$ adds another layer of difficulty, as it is increasingly complicated to derive a Log-Sobolev constant with non-exponential dependence on the temperature and the dimension. In the important work in \citet{menz2014poincare} a dimension-free with exponential dependence on $\beta$ Log Sobolev constant was obtained under general assumptions. In fact, the result was also shown to be optimal for a specific one-dimensional example (see Section 2.4 in \citet{menz2014poincare}).\\
Under more restrictive but realistic assumptions, a result establishing Log-Sobolev constants with polynomial dependence in the temperature and the dimension has only be derived in \citet{li2020riemannian} in the compact manifold setting.\\
The interested reader may wish to be informed by an excellent analysis of the use of isoperimetric inequalities in the context of sampling and optimization in \citet{raginsky}, where an exponential in both temperature and the dimension Log Sobolev constant is obtained under Assumptions similar to Assumptions \bref{ass11} and \aref{ass-2dissip}.
\section{Presentation of the algorithm}\label{presentation tamed}
We propose a new Polygonal Euler-Krylov (tamed) scheme which is inspired by the construction developed in the work of \citet{johnston2024strongly} and \citet{johnston2023kinetic}. The novelty of this scheme lies in the fact that it preserves the dissipativity condition (Assumption \aref{ass-2dissip}) of the initial gradient (see Lemma \ref{dissipativity-tamed}) . This enables the derivation of moment bounds without additional assumptions such as Assumption $\mathbf{H}2 ii)$ in \citet{tula}, as  the algorithm satisfies the important condition 3 as described in \citet{lim2021polygonal}.\\
Our algorithm is given by the following iterative scheme.
Let $\lambda>0$ the stepsize of the algorithm and let
\[f(x)=h(x)-ax.\] We define $f_\lambda (x)=\frac{f(x)}{1+\sqrt{\lambda}|x|^{2l}}.$
We propose the splitted tamed unadjusted Langevin algorithm ($\mathbf{sTULA}$)
\begin{equation}\label{eq-sTULA}
    \bar{\theta_0}^\lambda:=\theta_0, \quad \bar{\theta}_{n+1}^\lambda:=\bar{\theta}_n^\lambda-\lambda h_\lambda\left(\bar{\theta}_n^\lambda\right)+\sqrt{2 \lambda \beta^{-1}} \xi_{n+1}, \quad n \in \mathbb{N}_0,
\end{equation}
where $\lambda>0$ is the step size, $\left(\xi_n\right)_{n \in \mathbb{N}}$ is a sequence of independent standard $d$-dimensional random variables independent of the $\mathbb{R}^d$-valued random variable $\theta_0$, and where for all $\theta \in \mathbb{R}^d$,
the tamed coefficient of our iterative scheme shall be given as
\begin{equation}
h_\lambda(x)= ax + f_\lambda (x).
\end{equation}
The restrictions on the stepsize will be given by the respective ones the moment bounds (i.e Lemma \ref{2moments} and \ref{highmom}) and some important lemmas in the Appendix (i.e Lemma \ref{pt-decay}).
We set $\lambda_{max}=\min\{1,\frac{1}{4(2a +4L)^2}\}.$
\section{Main results}
First we present a result which shows that under additional assumptions, for $\beta$ large enough one may obtain a Log Sobolev constant with at most polynomial dependence on the dimension and temperature.
\newtheorem{Poincare constant}[Def1]{Theorem}
\begin{Poincare constant}\label{Poincare constant general}
Let Assumptions \aref{ass-derivbound}, \aref{ass-pol lip} and \bref{ass-deltau}-\bref{ass-Morse} hold. In addition, we assume that \begin{equation}\label{eq-limitcond}
\exists R>0, \quad c_H>0: \quad |h(x)|\geq c_H \quad \forall |x|\geq R. 
\end{equation} 

Then, for $\beta \geq \mathcal{O}(d^6)$,
$\pi_\beta$ satisfies a Poincare inequality with constant $(C_P)^{-1}$ which doesn't depend explicitly on $d$ and $\beta$.
\end{Poincare constant}
\newtheorem{LSI constant}[Def1]{Corollary}
\begin{LSI constant}\label{theo-LSI constant}
Let Assumptions \aref{ass-derivbound}-\aref{ass-initial cond} and \bref{ass11}-\bref{ass-Morse}  hold.
Then, for $\beta \geq \mathcal{O}(d^6)$,
$\pi_\beta$ satisfies \lref{ass-LSI} with constant $C_{LSI}$ such that \[(C_{LSI})^{-1}\leq \mathcal{O}(\beta^2).\]
\end{LSI constant}
We also present a result where Log Sobelev inequality is obtained under a " strongly convex at infinity" condition.
\begin{theorem}\label{lemma-comparisonying}
Suppose $u:\mathbb{R}^d\rightarrow \mathbb{R},$ $ u\in \mathcal{C}^2$ satisfying \aref{ass-pol lip} and \begin{equation}\label{eq-convinf}
    \langle \nabla u(x)-\nabla u(y),x-y\rangle \geq \left(c_1(|x|^{2r}+|y|^{2r})-c_2(|x|^l+|y|^l)-c_3\right)|x-y|^2 \quad \forall x,y \in \mathbb{R}^d,
\end{equation} for some $c_1,c_2,c_3>0$ and $2r>l>0$. \\
    Then, $\pi_\beta$ satisfies an LSI with constant independent of the dimension and exponential in $\beta$.
\end{theorem}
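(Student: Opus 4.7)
The plan is to reduce Theorem~\ref{lemma-comparisonying} to the Bakry--Emery criterion via a bounded perturbation and to conclude by the Holley--Stroock principle cited in the introduction.

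First, I would let $y\to x$ in \eqref{eq-convinf}; dividing by $|x-y|^2$ and passing to the limit along $y=x+tv$, $t\downarrow 0$, gives the pointwise Hessian bound
$$\nabla^2 u(x) \succeq \bigl(2c_1|x|^{2r} - 2c_2|x|^l - c_3\bigr) I_d, \qquad x\in \mathbb{R}^d.$$
Because $2r>l$, there exist $R_0>0$ and $m_0>0$ depending only on the fixed parameters $c_1,c_2,c_3,r,l$ such that $\nabla^2 u(x)\succeq m_0 I_d$ for every $|x|\geq R_0$. Thus $u$ is uniformly strongly convex outside a fixed compact ball, although it may fail to be convex on the ball.

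Second, I would construct $U\in\mathcal{C}^2(\mathbb{R}^d)$ that (i) agrees with $u$ on $\{|x|\geq R_1\}$ for a chosen $R_1>R_0$, and (ii) is globally $m_0$-strongly convex. A natural construction is $U(x):=u(x)+K\varphi(x)$, where $\varphi$ is a smooth nonnegative bump compactly supported in $B(0,R_1)$ whose Hessian dominates $\gamma I_d$ on $\overline{B(0,R_0)}$ and transitions smoothly to $0$ near $|x|=R_1$, and $K$ is taken so large that $K\gamma$ exceeds $m_0 + \sup_{|x|\leq R_1}\|\nabla^2 u(x)\|$. The supremum is finite by $\mathcal{C}^2$-regularity; the polynomial Lipschitz Assumption \aref{ass-pol lip} controls it by a quantity depending only on $L',l',R_1$ and not on $d$. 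The radial cut-off ensures that $U$ and its first two derivatives match those of $u$ across the interface $|x|=R_1$.

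Third, applying the Bakry--Emery criterion (Theorem~\ref{Bakry-Emery}) to $U$ yields $\mathrm{LSI}(\alpha_U)$ for $\pi^U_\beta(dx)\propto e^{-\beta U(x)}dx$ with $\alpha_U\geq \beta m_0$. The perturbation $V:=u-U=-K\varphi$ is compactly supported in $B(0,R_1)$ and satisfies $\mathrm{osc}(V)\leq K\sup\varphi =:M_V$, a quantity that is dimension-free by the above choices. Since $\pi_\beta\propto e^{-\beta V}\pi^U_\beta$, the Holley--Stroock perturbation lemma gives $\pi_\beta\in\mathrm{LSI}(\alpha)$ with
$$\alpha \geq \beta m_0\, e^{-2\beta M_V},$$
which is exponential in $\beta$ but independent of $d$, as claimed.

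\textbf{Main obstacle.} The most delicate ingredient is the construction of the interpolating potential $U$: one must simultaneously secure $\mathcal{C}^2$-regularity across $|x|=R_1$, global $m_0$-strong convexity, and control of $M_V=\mathrm{osc}(V)$ purely in terms of the fixed constants $c_1,c_2,c_3,r,l,L',l'$ and the radii $R_0,R_1$. Once the bump $\varphi$ and constant $K$ are calibrated so that these three requirements hold in a dimension-free way, the Bakry--Emery plus Holley--Stroock combination delivers the stated LSI automatically.
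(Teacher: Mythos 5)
Your high-level strategy matches the paper's: derive strong convexity outside a ball from \eqref{eq-convinf}, build a globally strongly convex modification $U$ that agrees with $u$ at infinity, apply Bakry--Emery to $U$, and transfer the LSI to $\pi_\beta$ via Holley--Stroock with a dimension-free oscillation bound. The pointwise Hessian bound you derive by sending $y\to x$ is also correct.

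However, the key construction step is wrong, and not in a way that can be repaired by ``calibrating'' the bump. You propose $U = u + K\varphi$ with $\varphi\geq 0$, $\mathcal{C}^2$, compactly supported in $B(0,R_1)$, and $\nabla^2\varphi\succeq \gamma I_d$ on $\overline{B(0,R_0)}$. But a nonnegative $\mathcal{C}^2$ function with compact support cannot have nonnegative Hessian throughout its support: along any ray exiting the support, $\varphi$ must decrease from a positive value back to $0$ with matching $\mathcal{C}^1$ data at the boundary, which forces the radial second derivative to be negative somewhere in the transition annulus $R_0<|x|<R_1$. Equivalently, the only convex compactly supported function is $0$. Consequently $\nabla^2\varphi$ has eigenvalues bounded above by $-\gamma'<0$ on a set of positive measure in the annulus, and $K\nabla^2\varphi$ is then of order $-K\gamma'$ there. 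Since on that annulus $\nabla^2 u$ is only bounded below by $m_0 I_d$, the sum $\nabla^2 u + K\nabla^2\varphi$ becomes strongly negative precisely when $K$ is taken large enough to cure the non-convexity inside $B(0,R_0)$. The construction therefore self-defeats: the larger $K$, the worse the Hessian in the blending zone.

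This is exactly the difficulty the paper's proof (via Lemma \ref{ma-ext} and Lemma \ref{flammlemma4adap}, adapting \citet{ma2019sampling}) is designed to circumvent. Rather than perturbing $u$ additively by a bump, the paper replaces $\tilde u := u - \tfrac{m}{4}|\cdot|^2$ on the ball by its \emph{convex extension} $V$ (defined as an infimum over convex combinations with nodes outside the ball), which is automatically convex on all of $\mathbb{R}^d$ and agrees with $\tilde u$ outside the ball; it then mollifies $V$ to regain smoothness and blends $\tilde u$ and $\tilde V$ across an annulus, controlling the extra Hessian terms produced by the blending weights through \eqref{eq-b1}--\eqref{eq-b2} with a sufficiently small mollification radius $\delta$. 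That control is possible only because the two functions being blended are already $\mathcal{C}^0$- and $\mathcal{C}^1$-close (of size $O(\delta)$) on the annulus, which is precisely the structural feature your additive bump construction does not have. So the gap in your proposal is not a missing calibration; a genuinely different device (convex-hull extension plus mollification, or something equivalent) is required to build $U$.
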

Using the previous result an LSI can be deduces for functions with locally Lipschitz gradients with added high-order regularization.
\begin{corollary}\label{cor-reg}
     Let $g:\mathbb{R}^d\rightarrow\mathbb{R}$, $g\in\mathcal{C}^2$  such that \[|\nabla g(x)-\nabla g(y)|\leq L (1+|x|^l+|y|^l)|x-y|\quad \forall x,y\in \mathbb{R}^d.\] Let $r>\frac{l}{2}>0$ and $\eta>0$. Let $u:=\eta |\cdot|^{2r+2}+g. $ \\Then, $\pi_\beta:=\frac{e^{-\beta u(x)}}{\int_{\mathbb{R}^d}e^{-\beta u(x)}}$ satisfies an LSI with constant independent of the dimension and exponential in $\beta$.
\end{corollary}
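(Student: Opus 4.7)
The plan is to deduce Corollary \ref{cor-reg} directly from Theorem \ref{lemma-comparisonying} by showing that $u=\eta|\cdot|^{2r+2}+g$ satisfies the hypotheses of that theorem, namely $u\in\mathcal{C}^2$, the polynomial Lipschitz continuity \aref{ass-pol lip}, and the convexity-at-infinity estimate \eqref{eq-convinf}. Smoothness is immediate because both summands are $\mathcal{C}^2$, and \aref{ass-pol lip} follows from the triangle inequality together with the fact that the map $z\mapsto |z|^{2r}z$ has Jacobian of norm at most $(2r+1)|z|^{2r}$, which yields $|\nabla u(x)-\nabla u(y)|\le C(1+|x|+|y|)^{2r}|x-y|$ since $2r>l$.

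The substantive step is establishing \eqref{eq-convinf}. I would split
\[
\langle\nabla u(x)-\nabla u(y),x-y\rangle = \eta(2r+2)\langle |x|^{2r}x-|y|^{2r}y,x-y\rangle + \langle\nabla g(x)-\nabla g(y),x-y\rangle,
\]
and treat the two pieces separately. For the regularization term I would prove the auxiliary inequality
\[
\langle |x|^{2r}x-|y|^{2r}y,x-y\rangle \;\ge\; c_r\bigl(|x|^{2r}+|y|^{2r}\bigr)|x-y|^2
\]
for some constant $c_r>0$ depending only on $r$. This follows by writing the left-hand side as $\int_0^1\langle\nabla^2\phi(tx+(1-t)y)(x-y),x-y\rangle\,dt$ with $\phi(z)=|z|^{2r+2}/(2r+2)$, using the Hessian bound $\nabla^2\phi(z)\succeq |z|^{2r}I_d$, and then applying the elementary estimate $\int_0^1|tx+(1-t)y|^{2r}dt\ge c_r(|x|^{2r}+|y|^{2r})$, which one gets by restricting the integral to $t\in[0,1/4]\cup[3/4,1]$ and applying the reverse triangle inequality in each interval. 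For the $g$-piece I would just use Cauchy--Schwarz and the assumed local Lipschitz condition on $\nabla g$ to obtain
\[
\langle\nabla g(x)-\nabla g(y),x-y\rangle \ge -L(1+|x|^l+|y|^l)|x-y|^2.
\]
Adding the two bounds yields \eqref{eq-convinf} with $c_1=\eta(2r+2)c_r$, $c_2=L$, $c_3=L$; since $2r>l$ by hypothesis, all the hypotheses of Theorem \ref{lemma-comparisonying} are in force.

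The main obstacle, and the only non-trivial piece of bookkeeping, is the comparison lemma $\langle|x|^{2r}x-|y|^{2r}y,x-y\rangle\ge c_r(|x|^{2r}+|y|^{2r})|x-y|^2$; once this is in hand the conclusion of Corollary \ref{cor-reg} is an immediate invocation of Theorem \ref{lemma-comparisonying}, which delivers an LSI constant independent of $d$ and exponential in $\beta$. Dimension independence is preserved because all constants $c_1,c_2,c_3,c_r,L,\eta$ above are dimension-free.
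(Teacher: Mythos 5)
Your proposal is correct and takes the same high-level route as the paper: verify that $u=\eta|\cdot|^{2r+2}+g$ satisfies \aref{ass-pol lip} and the convexity-at-infinity estimate \eqref{eq-convinf} and then invoke Theorem~\ref{lemma-comparisonying}. Where you diverge from the paper is in the proof of the key monotonicity estimate for the regularizer. The paper (Lemma~\ref{lemma-reg}) writes $f(x)=|x|^{2r+2}$ and expands $\langle\nabla f(x)-\nabla f(y),x-y\rangle$ by a polarization-type identity, isolating the nonnegative term $(r+1)(|x|^2-|y|^2)(|x|^{2r}-|y|^{2r})\ge 0$; this yields the sharp and $r$-independent bound $\langle|x|^{2r}x-|y|^{2r}y,x-y\rangle\ge\tfrac12(|x|^{2r}+|y|^{2r})|x-y|^2$, i.e.\ $c_r=\tfrac12$. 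You instead represent the left-hand side as $\int_0^1\langle\nabla^2\phi(tx+(1-t)y)(x-y),x-y\rangle\,dt$ with $\phi(z)=|z|^{2r+2}/(2r+2)$, use $\nabla^2\phi(z)\succeq|z|^{2r}I_d$, and then lower-bound $\int_0^1|tx+(1-t)y|^{2r}\,dt$ by restricting to $t\in[3/4,1]$ (resp.\ $[0,1/4]$) according to whether $|x|\ge|y|$ or not; this gives a valid but worse, $r$-dependent constant $c_r\approx 2^{-2r-3}$. Both are purely elementary and dimension-free, so the conclusion (LSI constant independent of $d$, exponential in $\beta$) is unaffected; your route is a bit more geometric and requires the extra Hessian lower bound, while the paper's is a one-line algebraic identity with the cleaner constant. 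You also spell out the verification of \aref{ass-pol lip} for $u$, which the paper implicitly elides in its proof of Corollary~\ref{cor-reg}; this is a useful addition.
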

Using the tamed scheme presented in Section \ref{presentation tamed} we reach the following non-asymptotic results.
\newtheorem{rate}[Def1]{Theorem}
\begin{rate}\label{H-rate}
Let Assumption \aref{ass-derivbound}-\aref{ass-initial cond} hold. In addition, we assume \lref{ass-LSI}.
Let $\rho_n$ be the distribution of $n-th$ iterate of the algorithm \eqref{eq-sTULA}.
Then,  for $\lambda\leq \lambda_{max}$,
\[H_{\pi_\beta}(\rho_n)\leq e^{-\frac{3}{2} C_{LSI}\lambda (n-1) } H_{\pi_\beta}(\rho_0) + \frac{\beta \hat{C}}{\frac{3}{2} C_{LSI}}\lambda \]

where $\hat{C}$ depends polynomially on the dimension.

As a result, to achieve an accuracy $H_{\pi_\beta}(\rho_n)\leq \epsilon$, for $\lambda\leq \frac{3 \epsilon C_{LSI}}{2 \beta \hat{C}}$, one needs
\[\left\{
	\begin{array}{ll}
		n\geq \frac{2 \beta \hat{C}}{\epsilon}C_{LSI}^{-1}\log(\frac{2}{\epsilon}H_{\pi_\beta}(\rho_0)) \quad \text{iterations}  & \mbox{under \lref{ass-LSI} }  \\
		n\geq \mathcal{O}\left(\frac{2}{\epsilon}\log(\frac{2}{\epsilon}){poly}(d)\right) \quad \text{iterations} & \mbox{if} \quad \text{ \lref{ass-LSI} is replaced by \bref{ass11}-\bref{ass-Morse}} \quad \\& \text{and}  \quad \beta \geq \mathcal{O}(d^6)
  \\n\geq \mathcal{O}\left(\frac{2}{\epsilon}\log(\frac{2}{\epsilon}){poly}(d)\right) \quad \text{iterations}& \mbox{if} \quad \text{\lref{ass-LSI} is replaced by \eqref{eq-convinf}}
	\end{array}
\right.\]
\end{rate}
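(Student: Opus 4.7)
I would follow the continuous-interpolation strategy introduced by \citet{vempala2019rapid}, adapted to the tamed scheme \eqref{eq-sTULA}. The skeleton is: (i) build a continuous-time process whose marginals interpolate the iterates, (ii) differentiate the KL divergence along this flow, (iii) use \lref{ass-LSI} to convert the Fisher-information term into a contraction in $H_{\pi_\beta}$, and (iv) bound the remaining discretization-bias term by $\hat C \lambda$ with $\hat C$ polynomial in $d$ via moment estimates on $\bar\theta_n^\lambda$.

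Concretely, I introduce
\[
d\theta_t^\lambda = -h_\lambda\bigl(\bar\theta_{\lfloor t/\lambda\rfloor}^\lambda\bigr)\,dt + \sqrt{2\beta^{-1}}\,dB_t,
\]
so that $\theta_{n\lambda}^\lambda$ has the same law as $\bar\theta_n^\lambda$. Writing $\rho_t$ for its density, a direct Fokker--Planck computation and integration by parts give
\[
\frac{d}{dt}H_{\pi_\beta}(\rho_t) = -\frac{1}{\beta}\int \bigl|\nabla\log(\rho_t/\pi_\beta)\bigr|^2\rho_t\,dx + \E\bigl\langle h(\theta_t^\lambda) - h_\lambda(\bar\theta_{\lfloor t/\lambda\rfloor}^\lambda),\,\nabla\log(\rho_t/\pi_\beta)(\theta_t^\lambda)\bigr\rangle.
\]
Young's inequality with parameter $c=2\beta$ applied to the cross term, followed by \lref{ass-LSI} in the form $\frac{1}{\beta}\int|\nabla\log(\rho_t/\pi_\beta)|^2\rho_t \geq 2 C_{LSI}\, H_{\pi_\beta}(\rho_t)$, produces the master differential inequality
\[
\frac{d}{dt}H_{\pi_\beta}(\rho_t) \leq -\tfrac{3}{2}C_{LSI}\, H_{\pi_\beta}(\rho_t) + \beta\,\E\bigl|h(\theta_t^\lambda) - h_\lambda(\bar\theta_{\lfloor t/\lambda\rfloor}^\lambda)\bigr|^2.
\]

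The main obstacle is controlling the discretization term. I split
\[
h(\theta_t^\lambda) - h_\lambda(\bar\theta_n^\lambda) = \bigl(h(\theta_t^\lambda) - h(\bar\theta_n^\lambda)\bigr) + \bigl(h(\bar\theta_n^\lambda) - h_\lambda(\bar\theta_n^\lambda)\bigr),
\]
estimate the first summand using \aref{ass-pol lip} together with the Brownian and drift increments of $\theta_t^\lambda - \bar\theta_n^\lambda$ over an interval of length at most $\lambda$, and handle the taming correction through the explicit identity $h(x)-h_\lambda(x) = \sqrt\lambda\,|x|^{2l} f_\lambda(x)$, which yields $O(\sqrt\lambda)$ against polynomial moments and hence $O(\lambda)$ after squaring. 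Both contributions are ultimately bounded by polynomial expectations of $|\bar\theta_n^\lambda|$, uniformly in $n$, supplied by Lemmas \ref{2moments} and \ref{highmom}; these in turn rely on the preservation of dissipativity established in Lemma \ref{dissipativity-tamed}. Tracking the polynomial degree in $d$ carefully through all these bounds is the delicate step; the output is $\E|h(\theta_t^\lambda)-h_\lambda(\bar\theta_n^\lambda)|^2 \leq \hat C\lambda$ with $\hat C$ polynomial in $d$.

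Substituting back, the master inequality becomes the linear ODE $H' \leq -\tfrac{3}{2}C_{LSI}H + \beta\hat C\lambda$; integrating between successive grid points and iterating over $n$ steps yields the claimed bound
\[
H_{\pi_\beta}(\rho_n)\leq e^{-\frac{3}{2} C_{LSI}\lambda (n-1)} H_{\pi_\beta}(\rho_0) + \frac{\beta \hat{C}}{\frac{3}{2} C_{LSI}}\lambda.
\]
The three iteration-complexity statements follow by equating the two summands to $\epsilon/2$, inverting, and inserting the relevant LSI constant in each regime: the hypothesized $C_{LSI}$ in the first bullet; the polynomial bound $C_{LSI}^{-1}=\mathcal{O}(\beta^2)$ from Corollary \ref{theo-LSI constant} in the second; and the dimension-free constant from Theorem \ref{lemma-comparisonying} in the third.
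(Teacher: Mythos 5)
Your proposal reproduces the paper's argument essentially step for step: you construct the same continuous-time interpolation, derive the same Fokker--Planck/integration-by-parts identity and Young's-inequality master differential inequality (this is Theorem \ref{Interpolation ineq}), apply \lref{ass-LSI} to get the $-\tfrac{3}{2}C_{LSI}H$ contraction, split the bias into the one-step error and the taming error (Lemmas \ref{onesteperror} and \ref{tamingerror}, with your identity $h-h_\lambda=\sqrt\lambda|x|^{2l}f_\lambda$ being exactly the algebraic observation used there), invoke the uniform moment bounds of Lemmas \ref{2moments}--\ref{highmom} based on the preserved dissipativity of Lemma \ref{dissipativity-tamed}, and then integrate and iterate. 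The appeals to Corollary \ref{theo-LSI constant} and Theorem \ref{lemma-comparisonying} for the last two complexity regimes are also as in the paper, so this is correct and takes the same route.
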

Using Pinsker's inequality one obtains non-asymptotic bounds in total variation distance.
\newtheorem{TV rate}[Def1]{Corollary}
\begin{TV rate}
Let Assumption \aref{ass-derivbound}-\aref{ass-initial cond} hold. In addition, we assume  \lref{ass-LSI}.
Let $\rho_n$ be the distribution of $n-th$ iterate of the algorithm.
Then, for $\lambda\leq \lambda_{max}$
\[||\mathcal{L}(\Bar{\theta}^\lambda_n)-\pi_\beta||_{TV}\leq \frac{\sqrt{2}}{2}\sqrt{e^{-\frac{3}{2}(C_{LSI}) \lambda (n-1) } H_{\pi_\beta}(\rho_0) + \frac{\beta \hat{C}}{\frac{3}{2}(C_{LSI})}\lambda}.\]
As a result, to achieve an accuracy $||\mathcal{L}(\Bar{\theta}^\lambda_n)-\pi_\beta||_{TV}\leq \epsilon$, for $\lambda\leq \frac{3 \epsilon^2 C_{LSI}}{2 \beta \hat{C}}$ one needs $n\geq \left(\frac{2 \beta \hat{C}}{\epsilon^2}C_{LSI}^{-1}\right)\log(\frac{2}{\epsilon}H_{\pi_\beta}(\rho_0))$ iterations.
If \lref{ass-LSI} is replaced by \bref{ass11}-\bref{ass-Morse} and $\beta \geq \mathcal{O}(d^6)$, or \lref{ass-LSI} is replaced by \eqref{eq-convinf}, one achieves the accuracy in 
$\mathcal{O}\left(\frac{2}{\epsilon^2}\log(\frac{2}{\epsilon}){poly}(d)\right) $ iterations.
\end{TV rate}
Using Talagrand's inequality one deduces the following result regarding the convergence in Wasserstein distance. 
\newtheorem{W2 rate}[Def1]{Corollary}
\begin{W2 rate}\label{W2 rate}
Let Assumption \aref{ass-derivbound}-\aref{ass-initial cond} hold. In addition, we assume \lref{ass-LSI}.
Then, there holds,
\[W_2(\mathcal{L}(\bar{\theta}^\lambda_n),\pi_\beta)\leq \frac{\sqrt{2}}{\sqrt{C_{LSI}}}\left( e^{- \frac{3}{4}(C_{LSI}\lambda(n-1)} H_{\pi_\beta}(\rho_0) + \sqrt{\frac{\beta \hat{C}}{\frac{3}{2}(C_{LSI})}\lambda}\right).\]
As a result, to achieve an accuracy $W_2(\mathcal{L}(\bar{\theta}^\lambda_n),\pi_\beta)\leq \epsilon$, for $\lambda\leq \frac{3 \epsilon^2 C_{LSI}^2}{2 \beta \hat{C}}$ one needs $n\geq \left(\frac{2 \beta \hat{C}}{\epsilon^2}C_{LSI}^{-2}\right)\log(\frac{2}{\epsilon}H_{\pi_\beta}(\rho_0))$ iterations.
If \lref{ass-LSI} is replaced by \bref{ass11}-\bref{ass-Morse} and $\beta \geq \mathcal{O}(d^6)$, or \lref{ass-LSI} is replaced by \eqref{eq-convinf}, one achieves the accuracy in 
$\mathcal{O}\left(\frac{2}{\epsilon^2}\log(\frac{2}{\epsilon}){poly}(d)\right) $ iterations.
\end{W2 rate}
One can use the previous result to provide non-asymptotic bounds for the excess risk optimization problem.
\newtheorem{excess risk}[Def1]{Corollary}
\begin{excess risk}[\citet{TUSLA}, Theorem 2, Adapted ]\label{cor-excess risk}
Let Assumptions \aref{ass-pol lip}, \aref{ass-2dissip} hold.
Let  $u_*=min_{x\in \mathbb{R}^d} u(x)$.
Then,
\[\mathbb{E}\left[u\left(\bar{\theta}_n^\lambda\right)\right]-u_{\star} \leq C W_2\left(\mathcal{L}\left(\bar{\theta}_n^\lambda\right), \pi_\beta\right) + \mathcal{O}\left(\frac{\log \beta}{\beta}\right).\]
\end{excess risk}
\subsection{Main contributions and comparison with relevant literature}
The main contribution of this article is to provide non-asymptotic bounds in $KL$-divergence, $W_2$ and $TV$ using isoperimetric techniques
for sampling from distributions with non-convex objective functions  with superlinearly growing gradients, under the framework shaped specifically by Assumption \ref{ass-pol lip}. These non-asymptotic results are used to provide bounds for the excess risk optimization problem.\\
One technical novelty of this article lies in the use of this new taming scheme which inherits the dissipativity condition of the original gradient. The non-asymptotic convergence results for our scheme are achieved by establishing under our optimization framework (where $\beta$ is large )  a differential inequality of relative entropy analogous to the one proved in \citet{vempala2019rapid} without a global gradient Lipschitz assumption. 
\\ Another important contribution which is also of technical interest is the extension of the techniques of \citet{li2020riemannian} by proving a Log Sobolev inequality under our unconstrained optimization framework, showing that the Log Sobolev constant has at most polynomial dependence on the dimension and temperature.\\
Since the prevailing approach in the literature, under the assumption of local (instead of global) gradient Lipschitz continuity use taming variants of Langevin algorithms, it is natural to compare our results with
\citet{neufeld2022non}, \citet{tula}.\\ An additional important contribution is the proof of an Log-Sobolev inequality with independent of the dimension (but exponential in temperature) Log-Sobolev constant, under convexity infinity assumption though allowing for superlinearly growing gradient. This result is an extension of the work of Proposition 2 in \citet{ma2019sampling}, beyond the global Lipschitz gradient case.
This article can be seen as an extension of the work of \citet{tula} in the non-convex case. In the current work non-asymptotic total variation bounds are obtained with the same $\mathcal{O}(\lambda)$ rate as the ones in \citet{tula} . In addition, contrary to the increased disspativity assumption in H2 ii) of \citet{tula} we are able to obtain the results using only $2$- dissipativity, owning to a more sophisticated scheme which inherits the required dissipativity properties (see Lemma \ref{dissipativity-tamed}).\\
Compared with the work in \citet{neufeld2022non}, since the results are proved without the use of a contraction semi-metric result of \citet{Harris}, under \aref{ass-derivbound}-\aref{ass-initial cond} and \bref{ass11}-\bref{ass-Morse} we are able to derive the same bounds with respect to the step-size but with a better control on the dimension and $\beta$ (polynomial vs exponential in \citet{neufeld2022non}). It should be noted that, as shown in Theorem \ref{lemma-comparisonying}, Assumption \aref{ass-pol lip} and Assumption 3i) of \citet{neufeld2022non} implies  \lref{ass-LSI} with constant independent of the dimension. Thus, using only Assumptions \aref{ass-derivbound}-\aref{ass-initial cond} and Assumption 3i) of \citet{neufeld2022non}, the constants in Corollary \ref{W2 rate} are polynomial in the dimension with the added benefit that  non-asymptotic bounds can also be derived for both $KL$ and total variation distance.\\
\section{Moment bounds}
In this section key properties regarding the growth and dissipativity of the drift coefficient of our tamed scheme are presented, enabling the derivation of uniform in the number of iterations moment bounds. It is important to point out, as it will be used in the following bounds, that due to \ref{ass-initial cond}, there holds
\[\E|\theta_0|^p<\infty \quad \forall p>0.\]
\begin{lemma}\label{growth-tamed}
  Let Assumption \aref{ass-derivbound}  hold. Then,  for $\lambda<1$, there holds 
  \[|h_\lambda(x)|\leq a|x|+\frac{1}{\sqrt{\lambda}}C_h.\]
  where $C_h:=L +a$ .
  \begin{proof}
      Writing \[|h_\lambda(x)|\leq a|x|+ \frac{a|x|+|h(x)|}{1+\sqrt{\lambda}|x|^{2l}}\leq a|x|+\frac{a}{\sqrt{\lambda}} \frac{|x|}{1+|x|^{2l}} +\frac{1}{\sqrt{\lambda}}\frac{|h(x)|}{1+|x|^{2l}} .\]
      Using the bound in \aref{ass-derivbound} the result follows easily.
  \end{proof}
  \end{lemma}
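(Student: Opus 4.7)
The plan is to exploit the decomposition $h_\lambda(x)=ax+f_\lambda(x)$ built into the definition of $\mathbf{sTULA}$, so that the linear part $ax$ is handled exactly and only the residual $f_\lambda(x)=(h(x)-ax)/(1+\sqrt{\lambda}|x|^{2l})$ must be controlled. By the triangle inequality this immediately gives
\[|h_\lambda(x)|\le a|x|+|f_\lambda(x)|\le a|x|+\frac{|h(x)|+a|x|}{1+\sqrt{\lambda}|x|^{2l}},\]
so the task reduces to showing that the second term on the right is bounded by $(L+a)/\sqrt{\lambda}$ uniformly in $x$.

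The key elementary observation I would use is that for $\lambda\le 1$ one has $\sqrt{\lambda}\le 1$, hence
\[1+\sqrt{\lambda}|x|^{2l}\ \ge\ \sqrt{\lambda}+\sqrt{\lambda}|x|^{2l}\ =\ \sqrt{\lambda}(1+|x|^{2l}).\]
Inverting this inequality extracts the taming factor $1/\sqrt{\lambda}$ and leaves the cleaner denominator $1+|x|^{2l}$. Plugging into the estimate above gives
\[\frac{|h(x)|+a|x|}{1+\sqrt{\lambda}|x|^{2l}}\ \le\ \frac{1}{\sqrt{\lambda}}\,\frac{|h(x)|}{1+|x|^{2l}}+\frac{a}{\sqrt{\lambda}}\,\frac{|x|}{1+|x|^{2l}}.\]

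It then suffices to bound each of the two ratios by a constant independent of $x$ and $\lambda$. For the first, Assumption \aref{ass-derivbound} gives directly $|h(x)|/(1+|x|^{2l})\le L$. For the second, the elementary inequality $|x|\le 1+|x|^{2l}$ (valid since $l$ makes the growth at least linear in the regime this scheme is designed for) yields $|x|/(1+|x|^{2l})\le 1$. Summing the two contributions gives the claimed bound with $C_h=L+a$.

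The calculation is essentially a one-line chain of inequalities, so there is no substantive obstacle; the only thing worth flagging is that the crucial step is the swap of $\sqrt{\lambda}|x|^{2l}$ for $1+|x|^{2l}$ in the denominator via $\sqrt{\lambda}\le 1$, which is precisely what converts the taming into the factor $1/\sqrt{\lambda}$ appearing in the statement. This bound is of the form expected for a Euler–Krylov tamed coefficient and will be used, together with the dissipativity property established in Lemma~\ref{dissipativity-tamed}, to control the moments of the iterates in Lemmas~\ref{2moments} and \ref{highmom}.
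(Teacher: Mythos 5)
Your proof is correct and follows the same route as the paper's: decompose $h_\lambda = ax + f_\lambda$, use $\sqrt{\lambda}\le 1$ to rewrite $1+\sqrt{\lambda}|x|^{2l}\ge\sqrt{\lambda}(1+|x|^{2l})$, and then bound each ratio. The paper's proof is essentially the same chain of inequalities stated tersely; your version just spells out the details, including the (necessary, and tacitly assumed throughout the taming construction and enforced by Assumptions \aref{ass-derivbound} and \aref{ass-2dissip}) fact that $2l\ge 1$ so that $|x|\le 1+|x|^{2l}$.
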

\begin{lemma}\label{dissipativity-tamed}
   Let Assumption \aref{ass-2dissip} hold. Then, $\langle h_\lambda(x),x \rangle \geq \frac{a}{2}|x|^2-b$.
\end{lemma}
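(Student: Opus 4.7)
The plan is to expand the definition of $h_\lambda$ and reduce the claim to the original dissipativity assumption, with the taming denominator simply weakening (or preserving) the bound.

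First, I would write $h_\lambda(x) = ax + f_\lambda(x) = ax + \frac{h(x)-ax}{1+\sqrt{\lambda}|x|^{2l}}$ and take the inner product with $x$ to obtain
\[
\langle h_\lambda(x), x\rangle \;=\; a|x|^2 \;+\; \frac{\langle h(x),x\rangle - a|x|^2}{1+\sqrt{\lambda}|x|^{2l}}.
\]
By Assumption \aref{ass-2dissip}, the numerator in the fraction is $\langle h(x),x\rangle - a|x|^2 \geq -b$. Hence the quantity in the numerator is bounded below by $-b$, which is the key input from 2-dissipativity.

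Second, I would handle the taming denominator $D := 1+\sqrt{\lambda}|x|^{2l} \geq 1$. Whether $\langle h(x),x\rangle - a|x|^2$ is nonnegative or in $[-b,0)$, dividing by $D\geq 1$ can only shrink its absolute value, so the quotient remains $\geq -b$. Plugging this back yields $\langle h_\lambda(x), x\rangle \geq a|x|^2 - b$, which in particular is at least $\tfrac{a}{2}|x|^2 - b$ (since $a>0$), giving the stated inequality.

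There is no real obstacle here: this is a direct one-line algebraic manipulation. The only point that requires mild care is the sign argument used when bounding $\frac{\langle h(x),x\rangle - a|x|^2}{D}$ from below by $-b$, since the dividend can change sign with $x$; this is why the assertion is phrased as ``preserving'' the dissipativity of $h$, and why the scheme's construction—splitting off the linear part $ax$ before taming—is what makes the argument work. This is the property advertised in Section \ref{presentation tamed} as enabling the moment bounds to be derived under 2-dissipativity alone.
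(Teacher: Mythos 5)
Your proof is correct and takes essentially the same route as the paper: you split off $a|x|^2$, bound the remaining inner product via the sign of $\langle f(x),x\rangle$ (equivalently, of the numerator), and note that dividing by $D\ge 1$ can only move a negative quantity toward zero. Both your argument and the paper's in fact establish the sharper bound $\langle h_\lambda(x),x\rangle\ge a|x|^2-b$; the stated $\tfrac{a}{2}|x|^2-b$ is just a weakening retained for consistency with the form of Assumption \aref{ass-2dissip}.
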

\begin{proof}
    Recall that $h(x)=ax +f(x) $
    Then, \[\langle h_\lambda(x),x\rangle=a|x|^2 +\langle f_\lambda(x),x\rangle \]
    Then, if $\langle f(x),x\rangle\geq 0$ it easily follows that $\langle f_\lambda(x),x\rangle\geq 0$ so
    \begin{equation}\label{eq-dissipt1}
        \langle h_\lambda(x),x \rangle \geq \frac{a}{2}|x|^2-b.
    \end{equation}
    On the other hand if $\langle f(x),x\rangle<0 $ one obtains that
    \[\langle f_\lambda(x),x\rangle \geq \langle f(x),x\rangle \] so
    \begin{equation}\label{eq-dissipt2}
        \langle h_\lambda(x),x\rangle \geq \langle h(x),x\rangle \geq \frac{a}{2}|x|^2-b.
    \end{equation}
\end{proof}
\begin{lemma}\label{2moments}
Let Assumptions \aref{ass-derivbound}-\aref{ass-2dissip} hold and $\lambda<\min\{1,\frac{1}{4a}\}.$ Then,
\[\sup_n \E |\tn|^2<\bar{C}_2\] where \begin{equation}
    \bar{C}_2=\E |\theta_0|^2 +2\left(2C_h^2+\frac{2}{\beta}d+2b\right)/a.
\end{equation}
\end{lemma}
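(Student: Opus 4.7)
The plan is to establish a one-step recursion of the form $\E|\tn[n+1]|^2 \leq (1-\kappa\lambda)\E|\tn|^2 + c\lambda$ and iterate. Squaring both sides of \eqref{eq-sTULA} gives
\[
|\tn[n+1]|^2 = |\tn - \lambda h_\lambda(\tn)|^2 + 2\sqrt{2\lambda/\beta}\,\langle \tn - \lambda h_\lambda(\tn),\xi_{n+1}\rangle + (2\lambda/\beta)|\xi_{n+1}|^2.
\]
Taking conditional expectations with respect to $\mathcal{F}_n := \sigma(\theta_0,\xi_1,\ldots,\xi_n)$ kills the cross term (since $\xi_{n+1}$ is centred and independent of $\mathcal{F}_n$) and replaces $|\xi_{n+1}|^2$ by $d$.

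Next, I would expand the first square as $|\tn|^2 - 2\lambda\langle h_\lambda(\tn),\tn\rangle + \lambda^2|h_\lambda(\tn)|^2$ and control the two $h_\lambda$-terms using the lemmas already proved: Lemma \ref{dissipativity-tamed} yields $-2\lambda\langle h_\lambda(\tn),\tn\rangle \leq -a\lambda|\tn|^2 + 2\lambda b$, while Lemma \ref{growth-tamed} combined with $(u+v)^2\leq 2u^2+2v^2$ gives $\lambda^2 |h_\lambda(\tn)|^2 \leq 2a^2\lambda^2|\tn|^2 + 2\lambda C_h^2$. Assembling these,
\[
\E\bigl[|\tn[n+1]|^2 \mid \mathcal{F}_n\bigr] \leq \bigl(1 - a\lambda + 2a^2\lambda^2\bigr)|\tn|^2 + 2\lambda\bigl(b + C_h^2 + d/\beta\bigr).
\]
The step-size condition $\lambda \leq 1/(4a)$ forces $2a^2\lambda^2 \leq a\lambda/2$, so the prefactor is bounded by $1 - a\lambda/2 \in (0,1)$.

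Taking unconditional expectations and iterating the resulting scalar recursion (a discrete Gronwall / geometric sum argument) gives
\[
\E|\tn|^2 \leq \bigl(1 - a\lambda/2\bigr)^n \E|\theta_0|^2 + \frac{2\lambda(b + C_h^2 + d/\beta)}{a\lambda/2},
\]
which is dominated by $\E|\theta_0|^2 + 2(2C_h^2 + 2d/\beta + 2b)/a = \bar C_2$ uniformly in $n$. There is no real obstacle here; the only technical point is that the recursion must first be shown to preserve $L^2$-integrability at each step so the conditional expectation argument is justified, and this follows by a simple induction using $\E|\theta_0|^2 < \infty$ (implied by \aref{ass-initial cond}), the linear growth of $h_\lambda$, and the Gaussianity of the innovations.
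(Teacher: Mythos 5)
Your proposal is correct and follows essentially the same route as the paper: conditioning on $\mathcal{F}_n$ to kill the cross term, expanding the drift square, bounding the inner-product term via Lemma \ref{dissipativity-tamed} and the quadratic term via Lemma \ref{growth-tamed}, absorbing $2a^2\lambda^2|\tn|^2$ into $a\lambda|\tn|^2$ using $\lambda\leq 1/(4a)$, and then iterating the resulting contraction. The only additional remark you make — verifying inductively that $L^2$-integrability propagates so the conditional-expectation manipulations are justified — is a sound, if minor, point that the paper leaves implicit.
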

\begin{proof}
\[\begin{aligned}
\E [|\theta^\lambda_{n+1}|^2|\tn]&=\E \left[\left|\tn-\lambda h_\lambda(\tn)+\sqrt{\frac{2\lambda}{\beta}}\xi_{n+1}\right|^2\big |\tn\right]\\&=|\tn-\lambda h_\lambda(\tn)|^2 +\frac{2\lambda}{\beta}\E |\xi_{n+1}|^2
\\&=|\tn|^2-2\lambda\langle \tn,h_\lambda(\tn)\rangle +\lambda^2 |h_\lambda(\tn)|^2+\frac{2\lambda}{\beta}d
\end{aligned}\]
where the second step was derived from the independence of $\tn$ and $\xi_{n+1}.$
Using the dissipativity and the growth properties of $h_\lambda$ on obtains
\[\E [|\bar{\theta}_{n+1}^\lambda|^2\big|\tn]\leq |\tn|^2 -\lambda a|\tn|^2 +2\lambda B +2\lambda^2 a^2 |\tn|^2+ 2\lambda C_h^2 +\lambda \frac{2}{\beta}d.\]
Since $2\lambda <\frac{1}{2a}$ there holds
\[\E [|\bar{\theta}_{n+1}^\lambda|^2\big|\tn]\leq (1-\lambda \frac{a}{2})|\tn|^2 +\lambda \left(2C_h^2+\frac{2}{\beta}d+2b\right).\]
Taking expectations yields
\[\E |\bar{\theta}_{n+1}^\lambda|^2\leq (1-\lambda \frac{a}{2})\E |\tn|^2 +\lambda \left(2C_h^2+\frac{2}{\beta}d+2b\right).\]
By induction,
\[\E |\bar{\theta}_{n+1}^\lambda|^2\leq (1-\lambda \frac{a}{2})^n \E |\theta_0|^2 +2\left(2C_h^2+\frac{2}{\beta}d+2b\right)/a.\]
\end{proof}

\begin{lemma}\label{highmom}
Let Assumptions \aref{ass-derivbound}-\aref{ass-initial cond} hold.
For $\lambda<\min\{1,\frac{1}{4a}\}.$
\[\sup_n \E |\tn|^{2p}\leq \bar{C}_p.\]
where $\bar{C}_p:=\E |\theta_0|^{2p} +\frac{4}{A}(p(2p-1)2^{2p-2}\beta^{-1}d N_{p,\beta,d}^{2p-1}+C_{p,d,\beta})$ where the rest of the constants are given explicitly in the proof.
\end{lemma}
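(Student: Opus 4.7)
The plan is to establish a one-step geometric recursion of the form
$\E[|\bar\theta_{n+1}^\lambda|^{2p}\mid\bar\theta_n^\lambda] \leq (1-A\lambda/4)|\bar\theta_n^\lambda|^{2p} + \lambda D_p$
and then iterate it as in the proof of Lemma \ref{2moments}, which accounts for the $4/A$ prefactor in the stated constant. Setting $Y_n := \bar\theta_n^\lambda - \lambda h_\lambda(\bar\theta_n^\lambda)$ and $Z_{n+1} := \sqrt{2\lambda/\beta}\,\xi_{n+1}$, I would expand $|Y_n + Z_{n+1}|^{2p} = (|Y_n|^2 + 2\langle Y_n, Z_{n+1}\rangle + |Z_{n+1}|^2)^p$ via the multinomial formula and then take conditional expectation. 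Independence of $\xi_{n+1}$ from $\bar\theta_n^\lambda$ combined with the vanishing of odd Gaussian moments kills every term containing an odd power of $\langle Y_n, \xi_{n+1}\rangle$; each surviving even term is controlled by $\E|\xi_{n+1}|^{2k} \leq c_k d^k$ and by $\E\langle Y_n,\xi_{n+1}\rangle^{2m} = (2m-1)!!\,|Y_n|^{2m}$ (using that $\langle Y_n,\xi_{n+1}\rangle$ is, conditionally on $\bar\theta_n^\lambda$, a mean-zero Gaussian of variance $|Y_n|^2$).

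For the leading contribution $|Y_n|^{2p}$, I would expand $|Y_n|^2 = |\bar\theta_n^\lambda|^2 - 2\lambda\langle\bar\theta_n^\lambda,h_\lambda(\bar\theta_n^\lambda)\rangle + \lambda^2|h_\lambda(\bar\theta_n^\lambda)|^2$ and invoke Lemma \ref{dissipativity-tamed} together with Lemma \ref{growth-tamed} to obtain $|Y_n|^2 \leq (1 - a\lambda/2)|\bar\theta_n^\lambda|^2 + \lambda M$ with $M = M(a,b,C_h)$, exactly as in the proof of Lemma \ref{2moments}. Raising to the $p$-th power via the elementary inequality $(1-x)^p \leq 1 - px + \binom{p}{2}x^2$ valid on $[0,1]$, and decoupling the $\lambda M$ contribution by Young's inequality, yields $|Y_n|^{2p} \leq (1 - pa\lambda/4)|\bar\theta_n^\lambda|^{2p} + \lambda K_p(1 + |\bar\theta_n^\lambda|^{2p-2})$. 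The remaining multinomial pieces each carry at least one factor $\sigma^2 = 2\lambda/\beta$; after conditional expectation, each is $\lambda$ times a polynomial in $|\bar\theta_n^\lambda|$ of degree strictly less than $2p$. The dominant such term $p\sigma^2 d\,|Y_n|^{2p-2}$ (coming from the $i=p-1,k=1$ piece together with its $m=1$ companion) is precisely the source of the $p(2p-1)2^{2p-2}\beta^{-1}d$ prefactor in the statement.

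To close the argument, I would apply Young's inequality $|\bar\theta_n^\lambda|^{2j} \leq \varepsilon|\bar\theta_n^\lambda|^{2p} + C_{\varepsilon,p,j}$ for each $j < p$, choosing $\varepsilon$ small enough that the destabilizing coefficient remains dominated by $pa/8$, and thereby arrive at the target recursion. Iterating it from $n = 0$ and using $\E|\theta_0|^{2p} < \infty$ from Assumption \aref{ass-initial cond} produces the claimed uniform bound, with $N_{p,\beta,d}$ naturally identified as an aggregate of the previously established lower-order moment bounds $\bar C_1,\dots,\bar C_{p-1}$; the proof is therefore organized as an induction on $p$ with Lemma \ref{2moments} as the base case. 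The main obstacle is the combinatorial bookkeeping: because $h_\lambda$ carries a $1/\sqrt\lambda$ piece, each occurrence of $\lambda h_\lambda$ in $|Y_n|^{2p}$ contributes $\sqrt\lambda$, and one must carefully verify that none of the many terms generated by expanding $(|Y_n|^2 + 2\langle Y_n, Z_{n+1}\rangle + |Z_{n+1}|^2)^p$ spoils the $(1 - A\lambda/4)$ geometric decay as $\lambda \to 0$.
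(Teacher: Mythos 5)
Your proposal is correct in its core strategy and closely parallels the paper's proof: you use the same decomposition into drift piece $Y_n$ and noise piece $Z_{n+1}$, the same binomial/multinomial expansion killing odd Gaussian moments, the same bound $|Y_n|^2\le(1-a\lambda/2)|\bar\theta^\lambda_n|^2+\lambda M$ via Lemmas \ref{dissipativity-tamed} and \ref{growth-tamed}, and the same iteration of a one-step geometric recursion. The genuine difference is in how the destabilizing lower-degree terms in $|\bar\theta^\lambda_n|$ are absorbed: you propose pointwise Young's inequality $|\bar\theta^\lambda_n|^{2j}\le\varepsilon|\bar\theta^\lambda_n|^{2p}+C_{\varepsilon,p,j}$ with $\varepsilon$ tuned so the coefficient stays below $pa/8$, whereas the paper instead performs explicit indicator splits on the events $\{|\bar\theta^\lambda_n|\ge M_p\}$ and $\{|\bar\theta^\lambda_n|^2\ge N_{p,\beta,d}\}$ — on the first event the destabilizing term is already negative, on the complement it is trivially bounded. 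Both devices are valid and yield constants of the same order; the paper's indicator route gives more explicit constants while your Young route is cleaner to write but hides the constants inside $\varepsilon$.

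Two small corrections to your reading of the constants. First, $N_{p,\beta,d}$ in the paper is not an aggregate of the lower moment bounds $\bar C_1,\dots,\bar C_{p-1}$; it is simply the threshold $N_{p,\beta,d}=\tfrac{8}{a}p(2p-1)2^{2p-2}\beta^{-1}d$ used in the second indicator split. Second, as a consequence, the paper's proof is \emph{not} an induction on $p$: because the indicator arguments absorb the lower-degree terms pointwise, each $p$ is handled directly (indeed your own use of pointwise Young's inequality also makes an induction unnecessary, so the final sentence of your plan — "organized as an induction on $p$" — is a detour you don't actually need). Your concern at the end about the $1/\sqrt\lambda$ hidden in $h_\lambda$ is well placed but is already discharged by Lemma \ref{growth-tamed}, which packages $h_\lambda$ as $a|x|+C_h/\sqrt\lambda$ so that $\lambda^2|h_\lambda|^2$ is of order $\lambda$; this is exactly what makes $|Y_n|^2\le(1-a\lambda/2)|\bar\theta^\lambda_n|^2+\lambda b'$ hold with a $\lambda$-independent $b'$.
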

\begin{proof}
Let $\Delta_n=\tn-\lambda h_\lambda(\tn)$ and $\Xi^\lambda_n=\sqrt{\frac{2\lambda}{\beta}}\xi_{n+1}$.
Using the inequality \[|x+y|^{2 p} \leq|x|^{2 p}+2 p|x|^{2 p-2}\langle x, y\rangle+\sum_{k=2}^{2 p}\left(\begin{array}{c}
2 p \\
k
\end{array}\right)|x|^{2 p-k}|y|^k,\] one deduces
\begin{equation}
    \begin{aligned}\label{eq-mombase}
\begin{aligned}
\hspace{-28pt}\E \left[|\bar{\theta}_{n+1}|^{2p}\big| \tn\right]
&\leq\left|\Delta^\lambda_n\right|^{2 p}+2 p\left|\Delta^\lambda_n\right|^{2 p-2} \mathbb{E}\left[\left\langle\Delta^\lambda_n, \Xi^\lambda_n\right\rangle \mid \bar{\theta}_n^\lambda\right]\\&+\sum_{k=2}^{2 p}\left(\begin{array}{c}
2 p \\
k
\end{array}\right) \mathbb{E}\left[\left|\Delta^\lambda_n\right|^{2 p-k}\left|\Xi^\lambda_n\right|^k \mid \bar{\theta}_n^\lambda\right]. \\
&=\left|\Delta^\lambda_n\right|^{2 p}+\sum_{k=0}^{2 p-2}\left(\begin{array}{c}
2 p \\
k+2
\end{array}\right) \mathbb{E}\left[\left|\Delta^\lambda_n\right|^{2 p-2-k}\left|\Xi^\lambda_n\right|^k\left|\Xi^\lambda_n\right|^2 \mid \bar{\theta}_n^\lambda\right] \\
&=\left|\Delta^\lambda_n\right|^{2 p}+\sum_{k=0}^{2 p-2} \frac{2 p(2 p-1)}{(k+2)(k+1)}\left(\begin{array}{c}
2 p-2 \\
k
\end{array}\right) \mathbb{E}\left[\left|\Delta^\lambda_n\right|^{2 p-2-k}\left|\Xi^\lambda_n\right|^k\left|\Xi^\lambda_n\right|^2 \mid \bar{\theta}_n^\lambda\right] \\
&\leq\left|\Delta^\lambda_n\right|^{2 p}+p(2 p-1) \sum_{k=0}^{2 p-2}\left(\begin{array}{c}
2 p-2 \\
k
\end{array}\right) \mathbb{E}\left[\left|\Delta^\lambda_n\right|^{2 p-2-k}\left|\Xi^\lambda_n\right|^k\left|\Xi^\lambda_n\right|^2 \mid \bar{\theta}_n^\lambda\right] \\
&=\left|\Delta^\lambda_n\right|^{2 p}+p(2 p-1) \mathbb{E}\left[\left(\left|\Delta^\lambda_n\right|+\left|\Xi^\lambda_n\right|\right)^{2 p-2}\left|\Xi^\lambda_n\right|^2 \mid \bar{\theta}_n^\lambda\right] \\
&=\left|\Delta^\lambda_n\right|^{2 p}+p(2 p-1) 2^{2 p-3} \mathbb{E}\left[\left(\left|\Delta^\lambda_n\right|^{2 p-2}+\left|\Xi^\lambda_n\right|^{2 p-2}\right)\left|\Xi^\lambda_n\right|^2 \mid \bar{\theta}_n^\lambda\right] \\
&=\left|\Delta^\lambda_n\right|^{2 p}+p(2 p-1) 2^{2 p-3}\left|\Delta^\lambda_n\right|^{2 p-2} \mathbb{E}\left[\left|\Xi^\lambda_n\right|^2\right]+p(2 p-1) 2^{2 p-3} \mathbb{E}\left[\left|\Xi^\lambda_n\right|^{2 p}\right] \\
&=\left|\Delta^\lambda_n\right|^{2 p}+p(2 p-1) 2^{2 p-2} \lambda \beta^{-1} d\left|\Delta^\lambda_n\right|^{2 p-2}\\&+ p(2 p-1) 2^{4 p-3}\left(\lambda \beta^{-1}\right)^p p !\left(\begin{array}{c}
\frac{d}{2}+p-1 \\
p
\end{array}\right)
\end{aligned}
\end{aligned}
\end{equation}
Using the arguments of the proof of Lemma \ref{2moments} one deduces that \begin{equation}
    |\Delta^\lambda_n|^2\leq (1-\lambda \frac{a}{2})|\tn|^2 + \lambda b'
\end{equation}
where $b'=2(C_h^2+B).$
Writing \[\begin{aligned}
 |\Delta_n|^{2p}&= \left[(1-\lambda \frac{a}{2})|\tn|^2 +\lambda b' \right]^{p}
 \\&\leq \sum_{k=0}^p (1-\lambda \frac{a}{2})^k |\tn|^{2k}  \lambda^{p-k
}(b')^{p-k}
\\&\leq (1-\lambda \frac{a}{2})|\tn|^{2p} + \sum_{k=0}^{p-1} (1-\lambda \frac{a}{2})^k |\tn|^{2k}  \lambda^{p-k
}(b')^{p-k}
\\&\leq  (1-\lambda \frac{a}{4})|\tn|^{2p} - \frac{\lambda a}{4}|\tn|^{2p} + \lambda \sum_{k=0}^{p-1}|\tn|^{2k} b'^{p-k}
\\&\leq(1-\lambda \frac{a}{4})|\tn|^{2p}+ \lambda  \sum_{k=0}^{p-1} \left[|\tn|^{2k} b'^{p-k}-\frac{a}{4p}|\tn|^{2p}\right]
\end{aligned}\]
Let $M_{p}:=\max{1,\max_{0\leq k\leq p-1} \left(\frac{2p}{A}b'^{p-k} \right)^{\frac{1}{2(p-k)}}}$
If $|\tn|\geq M_{p}$ the second term is negative
so
\[|\Delta_n^\lambda|^{2p}\mathds{1}_{|\tn|\geq M_{p}}\leq (1-\frac{\lambda a}{4})|\tn|^{2p}\]
and \[|\Delta_n^\lambda|^{2p}\mathds{1}_{|\tn|<M_{p}}\leq (1-\frac{\lambda a}{4})|\tn|^{2p}+\lambda p M_{p}^{2p-1}(1+b')^p\] which leads to
\begin{equation}\label{eq-dn}
    |\Delta_n^\lambda|^{2p}\leq (1-\frac{\lambda a}{4})|\tn|^{2p} +\lambda c(p)
\end{equation}
where $c(p)= p M_{p}^{2p-1}(1+b')^p$.
Applying \eqref{eq-dn} for $p-1$ one obtains
\begin{equation}\label{eq-dn1}
    |\Delta_n^\lambda|^{2p-2}\leq |\tn|^{2p-2} +\lambda c(p-1).
\end{equation}
Inserting \eqref{eq-dn}, \eqref{eq-dn1} into \eqref{eq-mombase} yields
\[\begin{aligned}
 \E\left [|\bar{\theta}^\lambda_{n+1}|^{2p}\big| \tn \right]&\leq (1-\frac{\lambda a}{4})|\tn|^{2p}  +\lambda p(2p-1)2^{2p-2} \beta^{-1}d  |\tn|^{2p-2} \\&+\lambda \left(c(p)+ p(2p-1)2^{2p-2} \beta^{-1}d c(p-1)\right)
 \\&+  p(2 p-1) 2^{4 p-3}\left(\lambda \beta^{-1}\right)^p p !\left(\begin{array}{c}
\frac{d}{2}+p-1 \\
p
\end{array}\right)
\\&=(1-\frac{\lambda a}{8})|\tn|^{2p} +\left(-\frac{\lambda a}{8}|\tn|^{2p} + \lambda p(2p-1)2^{2p-2} \beta^{-1}d  |\tn|^{2p-1}\right)\\&+ \lambda C_{p,d,\beta}.
\end{aligned}\]
Let $N_{p,\beta,d}:= \frac{8}{a}p(2p-1)2^{2p-2} \beta^{-1}d .$
If $|\tn|^2\geq N_{p,\beta,d}$ then the middle term is negative otherwise, \\it is bounded by above by $\lambda p(2p-1)2^{2p-2}\beta^{-1}d N_{p,\beta,d}^{p-1}$ so
 \begin{equation}\label{eq-momsemifinal}
   \E\left [|\theta^\lambda_{n+1}|^{2p}\big| \tn \right]\leq (1-\lambda \frac{a}{8})|\tn|^{2p}   + \lambda (p(2p-1)2^{2p-2}\beta^{-1}d N_{p,\beta,d}^{p-1}+C_{p,d,\beta}).
 \end{equation}
 Taking expectations and iterating in $n$ in \eqref{eq-momsemifinal} yields
 \[\E|\theta_n^\lambda|^{2p}\leq (1-\lambda \frac{a}{8})\E |\theta_0|^{2p} +  \frac{8}{a}(p(2p-1)2^{2p-2}\beta^{-1}d N_{p,\beta,d}^{p-1}+C_{p,d,\beta}).\]
\end{proof}
\section{Establishing a key differential inequality regarding KL- divergence}
The goal of this Section is to establish a differential inequality that will be the basis for our analysis.
We define the continuous-time interpolation of our algorithm given as 
\begin{equation}
    \theta_t=\theta_{k\lambda}-(t-k\lambda) h_\lambda(\theta_{k\lambda}) + \frac{\sqrt{2}}{\beta}(B_{t}-B_{k\lambda}), \quad \forall t\in [k\lambda,(k+1)\lambda]
\end{equation}
and $\theta_0=\bar{\theta_0}.$

That way \[\mathcal{L}(\theta_{k\lambda})=\mathcal{L}(\bar{\theta}^\lambda_{k}) \quad \forall k \in \mathbb{N}.\]
We define the marginal distribution of $\theta_t$ as $\pt$.
     One notices that, since conditioned on $\theta_{k\lambda}$, $\theta_t$ is a Gaussian its conditional distribution  is given by \[\ptk(x|y)= C e^{-\frac{\sqrt{t-k\lambda}}{2}|x-\mu(t,y)|^2}\]
     where $\mu(t,y)= y-(t-k\lambda)h_\lambda(y)$ and $C$ some normalizing constant. One further notes that, as $\ptk(x|y)$ can be viewed as a distribution of a process satisfying a Langevin SDE with constant drift $-h_\lambda(y)$ and initial condition $y$, i.e
     \[\begin{aligned}
         d\hat{\mu}_t&=-h_\lambda(y)dt + \sqrt{\frac{2}{\beta}}dB_t, \quad \forall t \in (k\lambda,(k+1)\lambda]
         \\ \hat{\mu}_{k\lambda}&=y
     \end{aligned}\] it satisfies the following Fokker-Planck PDE:
    \begin{equation}\label{eq-FP}
       \frac{\partial \ptk(x|y)}{\partial t}=div\left(\ptk(x|y) h_\lambda(y)\right) +\Delta_x \ptk(x|y).
    \end{equation}
    \newtheorem{1exch}[Def1]{Lemma}
    \begin{1exch}\label{1exch}
    Let Assumptions \aref{ass-derivbound}-\aref{ass-initial cond} hold.
Then,   \[\E \left(\frac{\partial \ptk(x|\theta_{k\lambda})}{\partial t}\right)=\frac{\partial\pt}{\partial t}(x).\]
    \end{1exch}
\begin{proof}
Analysing the left hand side of the equation one deduces the following:

In a neighbourhood of $t$, for fixed $x$,  $\frac{\partial \ptk(x|y)}{\partial t}$ decays exponentially with $y$ and since $\pkl(y)\leq Ce^{-r|y|^2}$ due to Lemma \ref{pt-decay} one can exchange the derivative with the integral in the following expression
\[\frac{\partial}{\partial t} \int_{\mathbb{R}^d} \pkl(y) \ptk(x|y) dy=\int_{\mathbb{R}^d} \pkl(y) \frac{\partial \ptk(x|y)}{\partial t}dy.\]
Noticing that \[\frac{ \partial \pt}{\partial t} (x)=\frac{\partial}{\partial t} \int_{\mathbb{R}^d} \pkl(y) \ptk(x|y) dy\] and
\[\int_{\mathbb{R}^d} \pkl(y)\frac{\partial \ptk(x|y)}{\partial t}dy=\E \left(\frac{\partial \ptk(x|\theta_{k\lambda})}{\partial t}\right)\] yields the result.
\end{proof}
\newtheorem{2exch}[Def1]{Lemma}
\begin{2exch}\label{2exch}
Let Assumptions \aref{ass-derivbound}-\aref{ass-initial cond} hold. Then,\[\E \left( div_x \left (\ptk(x|\theta_{k\lambda}) h_\lambda(\theta_{k\lambda})\right)\right)=div_x \left(\pt(x) \E \left (h_\lambda(\theta_{k\lambda})\big| \theta_t=x\right)\right). \]
\end{2exch}
\begin{proof}
Since $\pt$ decays exponentially with $y$ and for fixed $t$, in a neighbourhood of $x$ $\nabla\ptk(x|y)$ is at most linear in $y$ and $h_\lambda$ has at most linear growth, this enables the interchange of integral and derivative with respect to $x$ in the following expression
\[\int_{\mathbb{R}^d} \pkl(y) div _x\left(\ptk(x|y)h_\lambda(y)\right)dy=div_x \int_{\mathbb{R}^d}\pkl(y) \ptk(x|y) h_\lambda(y) dy\]
Since \[\E \left( div \left (\ptk(x|\theta_{k\lambda}) h_\lambda(\theta_{k\lambda})\right)\right)=\int_{\mathbb{R}^d} \pkl(y) div_x \left(\ptk(x|y)h_\lambda(y)\right)dy\] and due to Bayes theorem
\[\begin{aligned}
div_x \int_{\mathbb{R}^d} \pkl(y) \ptk(x|y)h_\lambda (y) dy&=div _x\int_{\mathbb{R}^d} \pt(x)
\hat{\pi}_{\theta_{k\lambda}|\theta_t}(y|x) h_\lambda(y)dy\\&=div _x\left(\pt(x) \E \left (h_\lambda(\theta_{k\lambda})\big| \theta_t=x\right)\right)
\end{aligned}\]
and the result immediately follows.
\end{proof}
\newtheorem{3exch}[Def1]{Lemma}
\begin{3exch}\label{3exch}
Let Assumptions \aref{ass-derivbound}-\aref{ass-initial cond} hold.
Then, \[\E \left(\Delta_x \ptk (x|\theta_{k\lambda} )\right)=\Delta \pt(x).\]
\end{3exch}
\begin{proof}
Noting that by definition \[\E \left(\Delta_x \ptk (x|\theta_{k\lambda} )\right)=\int_{\mathbb{R}^d} \Delta_x(\ptk(x|y))\pkl(y)dy\]
and
\[\Delta_x \pt(x)=\Delta_x \int_{\mathbb{R}^d} \ptk(x|y)\pkl(y)dy\]
it suffices to prove that \[ \int_{\mathbb{R}^d} \Delta_x(\ptk(x|y))\pkl(y)dy=\Delta_x \int_{\mathbb{R}^d} \ptk(x|y)\pkl(y)dy.\]
By simple computations for the Gaussian distribution one deduces that $|\nabla_x \log \ptk(x|y)|$, $\Delta_x \log \ptk(x|y)$ have at most linear growth with respect to $y$ in a neighbourhood of $x$ .
Writing \[\Delta_x \ptk(x|y)=\left(\Delta_x \log \ptk(x|y)+|\nabla_x \log \ptk(x|y)|^2\right)\ptk(x|y)\] one deduces that in a neighbourhood of $x$,
the integrand in the first term is dominated by a function of the form $C(1+|y|^2)e^{-c|y|^2}.$ Applying the dominated convergence theorem enables the exchange of the integral and the Laplacian which completes the proof.
\end{proof}
\newtheorem{exchres}[Def1]{Corollary}
\begin{exchres}\label{exchres}
Let Assumptions \aref{ass-derivbound}-\aref{ass-initial cond} hold.
Then, \[\frac{\partial \pt}{\partial t}(x)=div_x\left(\pt(x) \E \left (h_\lambda(\theta_{k\lambda})\big| \theta_t=x\right)\right)+ \frac{1}{\beta} \Delta\pt(x) \quad \forall t \in [k\lambda,(k+1)\lambda] \]
\end{exchres}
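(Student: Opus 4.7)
The plan is immediate: I would take the Fokker-Planck equation \eqref{eq-FP} satisfied by the conditional density $\ptk(x|y)$, multiply both sides by the marginal density $\pkl(y)$, and integrate over $y\in\mathbb{R}^d$. The three preceding exchange lemmas then convert each of the three resulting $y$-integrals into the corresponding $t$-, divergence-, and Laplacian-derivative of the marginal $\pt$, producing exactly the claimed PDE.

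Concretely, after integrating \eqref{eq-FP} against $\pkl(y)$ (and restoring the $1/\beta$ factor coming from the diffusion coefficient $\sqrt{2/\beta}$ in the defining SDE for $\hat{\mu}_t$) one obtains
\[
\int_{\mathbb{R}^d}\pkl(y)\,\frac{\partial \ptk(x|y)}{\partial t}\,dy
= \int_{\mathbb{R}^d}\pkl(y)\,\operatorname{div}_x\!\left(\ptk(x|y)\, h_\lambda(y)\right)dy
+ \frac{1}{\beta}\int_{\mathbb{R}^d}\pkl(y)\,\Delta_x \ptk(x|y)\,dy.
\]
Each integral is, by definition, an expectation with respect to $\theta_{k\lambda}\sim \pkl$. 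I would then invoke the three lemmas in order: Lemma \ref{1exch} identifies the left-hand side as $\partial_t\pt(x)$; Lemma \ref{2exch} rewrites the first right-hand integral as $\operatorname{div}_x\!\bigl(\pt(x)\,\E(h_\lambda(\theta_{k\lambda})\mid \theta_t=x)\bigr)$, which is where Bayes' rule enters to turn the joint density $\pkl(y)\ptk(x|y)$ into $\pt(x)\hat{\pi}_{\theta_{k\lambda}|\theta_t}(y|x)$; and Lemma \ref{3exch} identifies the second right-hand integral, divided by $\beta$, as $\frac{1}{\beta}\Delta\pt(x)$. Summing the three identities gives the corollary.

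There is essentially no obstacle left at this stage: all of the real analytic work, namely justifying the interchange of differentiation and integration using the Gaussian exponential decay of $\ptk(x|y)$ in $y$, the polynomial growth of $\nabla_x\log\ptk$ and $\Delta_x\log\ptk$ in $y$, and the decay estimate $\pkl(y)\leq Ce^{-r|y|^2}$ from Lemma \ref{pt-decay}, has already been carried out inside Lemmas \ref{1exch}-\ref{3exch}. Therefore, I would expect the proof to consist only of the two-line integration step above, followed by the citation of the three lemmas and a final assembly.
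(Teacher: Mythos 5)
Your proposal is correct and is exactly the argument the paper gives: integrate the Fokker--Planck equation \eqref{eq-FP} against $\pkl(y)$ and apply Lemmas \ref{1exch}, \ref{2exch}, and \ref{3exch} term by term. You also rightly note that \eqref{eq-FP} as printed is missing the $1/\beta$ in front of the Laplacian (a typo in the paper), and that the stated corollary already has the correct prefactor.
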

\begin{proof}
Taking expectations in \eqref{eq-FP} and combining Lemmas \ref{1exch},\ref{2exch} and \ref{3exch} yields the result.
\end{proof}
\newtheorem{timechange1}[Def1]{Lemma}
\begin{timechange1}\label{timechange1}
Let Assumptions \aref{ass-derivbound}-\aref{ass-initial cond} hold. Then, there exist $C,k,r'>0$ indepent of $x$, uniform in a small neighbourghood of $t$ such that 
\[div_x \left(\pt(x) \E \left (h_\lambda(\theta_{k\lambda})\big| \theta_t=x\right)\right)+\frac{1}{\beta}\Delta\pt\leq C(1+|x|^k)e^{-r'|x|^2}\]
\end{timechange1}
\begin{proof}
Writing, due to Bayes' theorem, 
\[\begin{aligned}
&div_x \left(\pt(x) \E \left (h_\lambda(\theta_{k\lambda})\big| \theta_t=x\right)\right)=\int_{\mathbb{R}^d} \pkl(y) div _x\left(\ptk(x|y)h_\lambda(y)\right)dy\\&\leq Ce^{-c|x|^2+|x|}\int_{\mathbb{R}^d}e^{-r|y|^2}|y| dy
\end{aligned} \] for some $C,c,r>0$ where the last step is a result of the Gaussian expression of the conditional density, the linear growth of $h_\lambda$ and the exponential decay of $\pkl$ given in Lemma \ref{pt-decay}.

For the second term, writing \[\Delta\pt = \pt \left( |\nabla \log \pt|^2+\Delta \log \pt\right)\]  the result follows due to  Lemmas \ref{pt-decay}, \ref{gradlog1 growth}, \ref{gradlog2 growth}.
\end{proof}
\newtheorem{cor-timechange}[Def1]{Corollary}
\begin{cor-timechange}\label{cor-timechange}
Let Assumptions \aref{ass-derivbound}-\aref{ass-initial cond} hold. Then,
\[\frac{d}{dt}H_{\pi_\beta}(\pt)=\int_{\mathbb{R}^d} \frac{\partial \pt(x)}{\partial t}(1+\log\pt(x)-\log \pi_\beta)dx\]
\end{cor-timechange}
\begin{proof}
Noting that $\log\pt,\log\pi$ have polynomial growth, due to Lemma \ref{timechange1},\\ $\frac{\partial \pt(x)}{\partial t}(1+\log\pt(x)-\log \pi_\beta)$ can be dominated by an $L^1$ integrable function over small neighbourhood of $t$, thus using the dominated convergence theorem one deduces the exchange of derivative and integration i.e
\[\begin{aligned}
 \int_{\mathbb{R}^d} \frac{\partial \pt(x)}{\partial t}(1+\log\pt(x)-\log \pi_\beta(x))dx&=\int_{\mathbb{R}^d}\frac{\partial}{\partial t}\left (\pt(x)\log \frac{\pt(x)}{\pi_\beta(x)}\right)dx\\&=\frac{d}{dt}\int_{\mathbb{R}^d} \pt(x)\log \frac{\pt(x)}{\pi_\beta(x)}dx\\&=\frac{d}{dt}H_{\pi_\beta}(\pt).
\end{aligned}\]
\end{proof}
\newtheorem{divergence}[Def1]{Corollary}
\begin{divergence}\label{divergence}
Let Assumptions \aref{ass-derivbound}-\aref{ass-initial cond} hold.
Let $k\in \mathbb{N}.$
Then, for every $t \in [k\lambda,(k+1)\lambda],$
\[\begin{aligned}
 \frac{d}{dt}  H_{\pi_\beta}(\pt)&=-\int_{\mathbb{R}^d}  \langle \pt(x)E \left (h_\lambda(\theta_{k\lambda})\big| \theta_t=x\right)+\frac{1}{\beta}\nabla \pt(x),\nabla \log\pt(x)-\nabla \log \pi_\beta \rangle dx .
\end{aligned}\]
\end{divergence}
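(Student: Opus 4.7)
The plan is to combine Corollary \ref{exchres} with Corollary \ref{cor-timechange} and then integrate by parts, using the exponential decay of $\pt$ to eliminate boundary terms.

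First, I would start from the identity
\[
\frac{d}{dt} H_{\pi_\beta}(\pt) = \int_{\mathbb{R}^d} \frac{\partial \pt(x)}{\partial t}\bigl(1 + \log \pt(x) - \log \pi_\beta(x)\bigr)\, dx
\]
provided by Corollary \ref{cor-timechange}, and substitute the Fokker--Planck type identity from Corollary \ref{exchres}:
\[
\frac{\partial \pt}{\partial t}(x) = \mathrm{div}_x\!\left(\pt(x)\, \E\bigl(h_\lambda(\theta_{k\lambda})\mid \theta_t = x\bigr)\right) + \frac{1}{\beta}\Delta \pt(x).
\]
This splits the time-derivative of the relative entropy into two integrals.

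Next, I would integrate by parts in each piece separately. In the first piece, the divergence pairs with $1 + \log \pt - \log \pi_\beta$; the constant $1$ has zero gradient, and $\nabla(\log \pt - \log \pi_\beta)$ is precisely the factor that appears in the desired expression. The Laplacian term is handled identically, with $\frac{1}{\beta}\nabla \pt$ left on the other side of the inner product. Combining the two yields exactly the right-hand side of the claim.

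The only real obstacle is justifying the two integrations by parts, i.e.\ showing the boundary terms at infinity vanish. This requires controlling the products $\pt(x)\, \E(h_\lambda(\theta_{k\lambda})\mid \theta_t = x)(1 + \log \pt - \log \pi_\beta)$ and $\nabla \pt(x)(1 + \log \pt - \log \pi_\beta)$ at infinity. Here one invokes the exponential decay of $\pt$ from Lemma \ref{pt-decay}, the at-most-linear growth of $h_\lambda$ from Lemma \ref{growth-tamed} (which, combined with Bayes' theorem, transfers to a controlled growth of the conditional expectation exactly as in the proof of Lemma \ref{timechange1}), and the polynomial growth of $\log \pt$, $\log \pi_\beta$ and $\nabla \log \pt$ obtained via Lemmas \ref{gradlog1 growth} and \ref{gradlog2 growth}. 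Since polynomial factors are absorbed by the Gaussian-type decay of $\pt$ and $\nabla \pt$, the surface integrals on large balls vanish in the limit, justifying both integrations by parts and completing the proof.
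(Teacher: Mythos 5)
Your proposal follows the paper's proof essentially line by line: start from Corollary \ref{cor-timechange}, substitute the Fokker--Planck identity from Corollary \ref{exchres}, integrate by parts so that the constant $1$ drops out, and kill the boundary terms via the decay of $\pt$ (Lemma \ref{pt-decay}) combined with the polynomial growth of $\log\pt$, $\log\pi_\beta$, $\nabla\log\pt$ (Lemmas \ref{gradlog1 growth}, \ref{gradlog2 growth}) and the linear growth of $h_\lambda$. The only cosmetic difference is that the paper bundles the divergence and Laplacian pieces into a single vector field $F_t$ and applies the divergence theorem once, whereas you integrate by parts on each piece separately; the justification of the vanishing surface integrals is the same.
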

\begin{proof}
Recall that from Lemma \ref{cor-timechange}, there holds
\begin{equation}
     \frac{d}{dt}  H_{\pi_\beta}(\pt)=\int_{\mathbb{R}^d}\frac{\partial \pt(x)}{\partial t}(1+\log\pt(x)-\log \pi_\beta)dx.
\end{equation}
Let \[F_t(x)=\pt(x)E \left (h_\lambda(\theta_{k\lambda})\big| \theta_t=x\right)+\frac{1}{\beta}\nabla \pt(x)\] and \[g_t(x)=1+\log\pt-\log\pi.\]
Recall from Corollary \ref{exchres} that \[\frac{\partial \pt}{dt}(x) =div_x (F_t) (x)\]
Since $\nabla \pt=\pt \nabla \log \pt$ using the   Lemmas \ref{pt-decay}, \ref{gradlog1 growth}, \ref{gradlog2 growth} ,\ref{timechange1} one deduces that there exists constants $C$, $q$ ,$r$>0 independent of $x$, uniform in a small neighbourhood of $t$, such that
\begin{equation}\label{eq-divbound}
    \max\{|F_t(x)g_t(x)|,|div(F_t)(x)g_t(x)|,|\langle F_t(x)\nabla g_t(x)\rangle|\}\leq C(1+|x|^q)e^{-r|x|^2}.
\end{equation}
We drop the dependence of the constants on $t$ since we want to integrate with respect to $x.$
Let $R>0$ and $v(x)$ the normal unit vector on $\partial B(0,R)$. Due to \eqref{eq-divbound}
\begin{equation}\label{eq-limit}
    \int_{\partial B(0,R)} \langle g_t(x)F_t(x),v(x)\rangle dx \leq R^d C(1+|R|^q)e^{-r|R|^2}.
\end{equation}
Since $div(F_t) g_t$, $\langle F_t,\nabla_x g_t \rangle$ are integrable (in view of \eqref{eq-divbound}) applying the divergence theorem on $B(0,R)$ there holds
\begin{equation}
    \int_{B(0,R)} div_x(F_t)(x) g_t(x) dx= \int_{\partial B(0,R)} \langle g_t(x)F_t(x),v(x)\rangle dx- \int_{B(0,R)}\langle F_t(x)\nabla_x g_t(x)\rangle dx.
\end{equation}
As a result,
\[\begin{aligned}
\hspace{-20pt} \int_{\mathbb{R}^d} div(F_t)(x) g_t(x)dx&=\lim_{R\rightarrow \infty} \int_{B(0,R)} div_x(F_t)(x) g_t(x) dx\\&=\lim_{R\rightarrow \infty}\left( \int_{\partial B(0,R)} \langle g_t(x)F_t(x),v(x)\rangle dx- \int_{B(0,R)}\langle F_t(x)\nabla_x g_t(x)\rangle\right)dx\\&=0-\lim_{R\rightarrow \infty}\int_{B(0,R)}\langle F_t(x)\nabla_x g_t(x)\rangle dx=-\int_{\mathbb{R}^d}\langle F_t(x)\nabla_x g_t(x)\rangle dx.
\end{aligned}\]
\end{proof}
\newtheorem{Interpolation ineq}[Def1]{Theorem}
\begin{Interpolation ineq}\label{Interpolation ineq}
Let Assumptions \aref{ass-derivbound}-\aref{ass-initial cond} and \lref{ass-LSI}  hold.
Then, for $\lambda<\lambda_{\max}$ and for every $t\in[k\lambda,(k+1)\lambda]$ ,$k\in \mathbb{N},$ there holds
\[\frac{d}{dt}H_{\pi_\beta}(\pt)\leq-\frac{3}{4} I_{\pi_\beta} (\pt) + \beta  \E | h(\theta_t)-h_{\lambda}(\theta_{k\lambda})|^2\]
where $I_{\pi_\beta}$ is given in Definition \ref{LSIdef}.
\end{Interpolation ineq}
\begin{proof}
Using Corollary \ref{divergence}, for all $t \in [k\lambda,(k+1)\lambda],$
    \[\begin{aligned}
\hspace{-20pt} \frac{d}{dt}H_{\pi_\beta}(\pt)&=-\int_{\mathbb{R}^d}  \langle \pt(x)E \left (h_\lambda(\theta_{k\lambda})\big| \theta_t =x \right)+\frac{1}{\beta}\nabla \pt(x),\nabla \log\pt(x)-\nabla \log \pi_\beta(x) \rangle dx
 \\&=-\int_{\mathbb{R}^d} \pt(x) \langle E \left (h_\lambda(\theta_{k\lambda})\big| \theta_t=x\right)+\frac{1}{\beta}\nabla \log \pt(x),\nabla \log\pt(x)-\nabla \log \pi_\beta(x) \rangle dx
 \\&=-\int_{\mathbb{R}^d} \pt(x) \langle  E \left (h_\lambda(\theta_{k\lambda})\big| \theta_t=x\right)+\frac{1}{\beta}\nabla \log \pi_\beta,\nabla \log\pt(x)-\nabla \log \pi_\beta(x) \rangle dx
 \\&-\frac{1}{\beta}\int_{\mathbb{R}^d} \pt |\nabla \log\pt(x)-\nabla \log \pi_\beta(x) |^2 dx
 \\&=-I_{\pi_\beta}(\pt)-\int_{\mathbb{R}^d} \pt(x) \langle  E \left (h_\lambda(\theta_{k\lambda})-h(x)\big| \theta_t=x\right),\nabla \log\pt(x)-\nabla \log \pi_\beta(x) \rangle dx
 \\&=-I_{\pi_\beta}(\pt)-\int_{\mathbb{R}^d} \pt(x) \langle  E \left (h_\lambda(\theta_{k\lambda})-h(\theta_t))\big| \theta_t=x\right),\nabla \log\pt(x)-\nabla \log \pi_\beta(x) \rangle dx
 \\&\leq - I_{\pi_\beta}(\pt)+\beta\int_{\mathbb{R}^d} \pt(x) \left | E \left (h_\lambda(\theta_{k\lambda})-h(\theta_t))\big| \theta_t=x\right)\right|^2dx +\frac{1}{4}I_{\pi_\beta}(\pt)
 \\&=-\frac{3}{4}I_{\pi_\beta}(\pt) + \beta \int_{\mathbb{R}^d} \pt(x)\left |\int_{\mathbb{R}^d} 
\hat{\pi}_{\theta_{k\lambda}|\theta_t}(y|x) (h_\lambda(y)-h(x))dy \right |^2 dx\\&\leq 
-\frac{3}{4}I_{\pi_\beta}(\pt)+ \beta \int_{\mathbb{R}^d} \pt(x) \int_{\mathbb{R}^d} 
\hat{\pi}_{\theta_{k\lambda}|\theta_t}(y|x) \left|h_\lambda(y)-h(x)\right|^2dy dx
 \\&=-\frac{3}{4}I_{\pi_\beta}(\pt)+\beta\E | h_\lambda(\theta_{k\lambda})-h(\theta_t)|^2
\end{aligned}\]
where the first inequality was obtained using Young inequality and the second using Jensen's.
\end{proof}
\section{Proof of non-asymptotic bounds for sampling: Theorem \ref{H-rate}}
\newtheorem{tamed to h}[Def1]{Lemma}
\begin{lemma}\label{tamingerror}
Let Assumptions \aref{ass-derivbound}-\aref{ass-initial cond} hold.
    Then,
    \[\E |h_\lambda(\theta_{k\lambda})-h(\theta_{k\lambda})|^2\leq C_{tam} \lambda \quad \forall k\in \mathbb{N}, \]
    where $C_{tam}:= 16\left( L^2 (\bar{C}_{4l}+1)+ \bar{C}_{2l+1}\right)$  and the constants are given in Lemma \ref{highmom}.
\end{lemma}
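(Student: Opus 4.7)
The plan is to reduce everything to an explicit algebraic identity for $h_\lambda - h$ and then invoke the polynomial growth bound of Assumption \aref{ass-derivbound} together with the moment estimates of Lemma \ref{highmom}. Since $h(x) = ax + f(x)$ by the definition of $f$, and $h_\lambda(x) = ax + f_\lambda(x)$ by construction, the linear parts cancel:
\[
h_\lambda(x) - h(x) \;=\; f_\lambda(x) - f(x) \;=\; \frac{f(x)}{1+\sqrt{\lambda}|x|^{2l}} - f(x) \;=\; -\,\frac{f(x)\sqrt{\lambda}\,|x|^{2l}}{1+\sqrt{\lambda}\,|x|^{2l}}.
\]
Bounding the denominator below by $1$ and squaring therefore yields the key pointwise inequality
\[
|h_\lambda(x) - h(x)|^2 \;\leq\; \lambda\,|f(x)|^2\,|x|^{4l}.
\]

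Next I would control $|f(x)|$ using Assumption \aref{ass-derivbound}. Since $|f(x)| \leq |h(x)| + a|x| \leq L(1+|x|^{2l}) + a|x|$, squaring gives $|f(x)|^2 \leq C\bigl(1 + |x|^{4l} + |x|^2\bigr)$ for a constant $C$ depending only on $L$ and $a$. Multiplying by $|x|^{4l}$ produces terms of the form $|x|^{4l}$, $|x|^{8l}$ and $|x|^{4l+2}$, all of which are controlled by moments of order $2p$ with $p \in \{2l, 4l, 2l+1\}$.

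Finally I would take expectations at $x = \theta_{k\lambda}$ and invoke Lemma \ref{highmom}: by definition $\bar{C}_p$ bounds $\sup_n \E|\bar\theta_n^\lambda|^{2p}$, and $\mathcal{L}(\theta_{k\lambda}) = \mathcal{L}(\bar\theta_k^\lambda)$, so in particular $\E|\theta_{k\lambda}|^{8l} \leq \bar{C}_{4l}$, $\E|\theta_{k\lambda}|^{4l} \leq 1 + \bar{C}_{4l}$ and $\E|\theta_{k\lambda}|^{4l+2} \leq \bar{C}_{2l+1}$. Collecting the resulting terms gives the claimed bound $\E|h_\lambda(\theta_{k\lambda}) - h(\theta_{k\lambda})|^2 \leq C_{tam}\lambda$ with the stated $C_{tam} = 16\bigl(L^2(\bar{C}_{4l}+1) + \bar{C}_{2l+1}\bigr)$ after absorbing numerical and $a$-dependent factors into the constants supplied by Lemma \ref{highmom}.

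There is no serious obstacle here: the identity for $h_\lambda - h$ immediately extracts the factor $\sqrt{\lambda}$ needed to obtain a linear rate in $\lambda$, and all remaining work is pointwise polynomial estimation followed by appeal to uniform-in-$n$ moment control. The only care required is in the book-keeping to match the explicit form of $C_{tam}$, which is routine.
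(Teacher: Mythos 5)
Your proposal is correct and follows essentially the same route as the paper: the same algebraic identity $h_\lambda - h = f_\lambda - f = -f\cdot\sqrt{\lambda}|x|^{2l}/(1+\sqrt{\lambda}|x|^{2l})$ to extract the $\lambda$-factor, the same growth bound on $|f|$ via Assumption \aref{ass-derivbound}, and the same appeal to the uniform moment bounds of Lemma \ref{highmom} with $p\in\{2l,4l,2l+1\}$.
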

\begin{proof}
    For every $x\in \mathbb{R}^d$,
    \[|h_\lambda(x)-h(x)|=\left|(h(x)-ax)(1-\frac{1}{1+\sqrt{\lambda}|x|^{2l}})\right|^2\leq \lambda \left|(|h(x)|+|x|)|x|^{2l}\right|^2\]
    so
    \[\E \left|h(\theta_{k\lambda})-h_\lambda(\theta_{k\lambda})\right|^2\leq \lambda \E \left|\left(|h(\Bar{\theta}_k)|+|\Bar{\theta}_k|\right)|\Bar{\theta}_k|\right|^2\leq  16\left( L^2 (\bar{C}_{4l}+1)+ \bar{C}_{2l+1}\right) \lambda.\]
    where the constants are given in Lemma \ref{highmom}.
\end{proof}
\newtheorem{onesteperror}[Def1]{Lemma}
\begin{lemma}\label{onesteperror}
Let Assumptions \aref{ass-derivbound}-\aref{ass-initial cond} hold.  Let $k\in \mathbb{N}.$ Then,
\[\E |h(\theta_{k\lambda})-h(\theta_t)|^2\leq C_{onestep}\lambda, \quad \forall t\in[k\lambda,(k+1)\lambda], \]
where $C_{onestep}$ is given in the proof is independent of $\lambda$ and depends polynomially on the dimension.
\end{lemma}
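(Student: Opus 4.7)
The plan is to use Assumption \aref{ass-pol lip} to reduce the estimate to a short-time $L^4$ bound on the increment of the continuous-time interpolation, then to control that increment directly from the interpolation formula. Precisely, \aref{ass-pol lip} gives
\[|h(\theta_t)-h(\theta_{k\lambda})|^2\le (L')^2\bigl(1+|\theta_t|+|\theta_{k\lambda}|\bigr)^{2l'}|\theta_t-\theta_{k\lambda}|^2,\]
and Cauchy-Schwarz then decouples the two factors:
\[\E|h(\theta_t)-h(\theta_{k\lambda})|^2\le (L')^2\bigl(\E(1+|\theta_t|+|\theta_{k\lambda}|)^{4l'}\bigr)^{1/2}\bigl(\E|\theta_t-\theta_{k\lambda}|^4\bigr)^{1/2}.\]

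For the moment factor, Lemma \ref{highmom} supplies arbitrary polynomial-in-$d$ moment bounds on $\theta_{k\lambda}$. To extend these to the intermediate time $t$, I would apply Minkowski's inequality to the interpolation identity
\[\theta_t=\theta_{k\lambda}-(t-k\lambda)\,h_\lambda(\theta_{k\lambda})+\sqrt{2/\beta}\,(B_t-B_{k\lambda}),\]
bounding the moments of $h_\lambda(\theta_{k\lambda})$ via Lemma \ref{growth-tamed} (combined with Lemma \ref{highmom}) and those of $B_t-B_{k\lambda}$ by standard Gaussian moment formulas. All resulting constants are polynomial in $d$ and uniform in $\lambda<\lambda_{\max}$, so this factor is bounded by a $d$-polynomial independent of $\lambda$.

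For the increment itself, the same decomposition yields
\[\E|\theta_t-\theta_{k\lambda}|^4\le 8(t-k\lambda)^4\,\E|h_\lambda(\theta_{k\lambda})|^4+8(2/\beta)^2\,\E|B_t-B_{k\lambda}|^4.\]
Lemma \ref{growth-tamed} gives $|h_\lambda(x)|^4\le C(|x|^4+\lambda^{-2})$, so the drift term is at most $C\lambda^4\bar{C}_2+C\lambda^2$; the Brownian term contributes $C\lambda^2 d^2/\beta^2$ by the standard $\E|B_t-B_{k\lambda}|^4 = d(d+2)(t-k\lambda)^2$ formula. Hence $\E|\theta_t-\theta_{k\lambda}|^4 = O(\lambda^2)\cdot\mathrm{poly}(d)$, and taking square roots and recombining with the bounded moment factor produces the claimed inequality with $C_{\mathrm{onestep}}$ polynomial in $d$.

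The main subtlety --- not really an obstacle --- is checking that the $1/\sqrt\lambda$ appearing in Lemma \ref{growth-tamed} does not spoil the rate: the prefactor $(t-k\lambda)^4\le\lambda^4$ exactly absorbs the $\lambda^{-2}$ loss from $|h_\lambda|^4$, leaving the desired $O(\lambda^2)$. Beyond this routine book-keeping and the straightforward moment extension from $\theta_{k\lambda}$ to $\theta_t$, the proof is a direct application of H\"older's inequality together with the already established uniform moment bounds.
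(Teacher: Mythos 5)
Your proof takes essentially the same route as the paper: apply the polynomial Lipschitz bound from \aref{ass-pol lip}, split via Cauchy--Schwarz, and estimate the moments of the one-step increment directly from the interpolation identity combined with the uniform moment bounds of Lemma~\ref{highmom}. The only cosmetic difference is the growth bound used for $h_\lambda$ --- you invoke the $a|x|+C_h/\sqrt{\lambda}$ estimate of Lemma~\ref{growth-tamed} and let the prefactor $(t-k\lambda)^4\le\lambda^4$ absorb the $\lambda^{-2}$, while the paper instead bounds $|h_\lambda(x)|\le 2a|x|+|h(x)|$ and uses the polynomial growth of $h$; both yield the same $O(\lambda^2)$ bound on $\E|\theta_t-\theta_{k\lambda}|^4$ and hence the same conclusion.
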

\begin{proof}
Let $t \in[ k\lambda,(k+1)\lambda].$
    First of all, one needs to bound the one step error $\E |\theta_t-\theta_{k\lambda}|^{2p}$ for different values of $p\in \mathbb{N}$.
    \[\begin{aligned}
        \E |\theta_t-\theta_{k\lambda}|^{2p}&\leq 2^{2p} \lambda^{2p}\E |h_\lambda(\theta_{k\lambda})|^{2p} +2^{p} \lambda^p \E |Z|^{2p}
        \\&\leq 2^{p} \lambda^{p}\E \left(2a|\theta_{k\lambda}|+|h(\theta_{k\lambda}|\right)|^{2p}+ 2^p\lambda^{p}\E |Z|^{2p}
        \\&\leq \lambda^{p} 2^p \lambda^p (1+2a)^{2p} \left(\E \left(1+|\theta_{k\lambda}|^{2l+1}\right)^{2p} +\E |Z|^{2p})\right)
        \\&\leq \lambda^p C_{1,p}
    \end{aligned}\]
    where $C_{1,p}=\mathcal{O}\left(d^{p(2l+1)}\right),$ which is derived by the moment bounds of the Gaussian, the fact that $\mathcal{L}(\theta_{k\lambda})=\mathcal{L}(\bar {\theta}^\lambda_{k})$ and the result in Lemma \ref{highmom}.
    In addition,
    \[\E (1+|\theta_t|+|\theta_{k\lambda}|)^{2p}\leq 3^{2p}\left(1+2^{2p}\E|\theta_{k\lambda}|^{2p}+\E |\theta_t-\theta_{k\lambda}|^{2p}\right)\leq C_{2,p},\]
    where $C_{2,p}\leq \mathcal{O}\left(d^{p(2l+1)}\right)$.
    Using the local Lipschitz property of $h$ and Cauchy -Swartz inequality one obtains
    \[\begin{aligned}
        \E |h(\theta_{k\lambda})-h(\theta_t)|^2&\leq
        L'^2 \E (1+|\theta_{k\lambda}|+|\theta_t|)^{2l'} |\theta_t-\theta_{k\lambda}|^2
        \\&\leq 2^{2l'}L'^2 \sqrt{\E (1+|\theta_{k\lambda}|+|\theta_t-\theta_{k\lambda}|)^{4l'}}\sqrt{\E|\theta_t-\theta_{k\lambda}|^4 }
        \\&\leq 2^{2l'} L'^2 \sqrt{C_{2,2l'}}\sqrt{C_{1,2}}\lambda.
    \end{aligned}\]
\end{proof}
\begin{proof}[Proof of Theorem \ref{H-rate}]
Setting $\dot{c}=\frac{3}{2}C_{LSI}$ and using the differential inequality obtained in Theorem \ref{Interpolation ineq} one obtains
\[\begin{aligned}
 \frac{d}{dt}H_{\pi_\beta}(\pt) &\leq -\frac{3}{4} I_{\pi_\beta}(\pt) +   \beta \E |\hl(\theta_{k\lambda}-h(\theta_t)|^2
\\&\leq -\Dot{c} H_{\pi_\beta} (\pt) + 2\beta \E |\hl(\theta_{k\lambda})-h(\theta_{k\lambda})|^2 + 2 \beta \E | h(\theta_{k\lambda})-h(\theta_t)|^2
\\&\leq -\Dot{c} H_{\pi_\beta} (\pt) + \beta \hat{C} \lambda
\end{aligned}\]
where $\hat{C}=2C_{onestep}+2C_{tam}$
where the first term has been bounded using the Log-Sobolev inequality and the rest of the terms using the one-step error in Lemma \ref{onesteperror} and the taming error in Lemma \ref{tamingerror}.
Splitting the terms  one obtains
\[\begin{aligned}
 \left( \frac{d}{dt}H_{\pi_\beta}(\pt)+\Dot{c} H_{\pi_\beta} (\pt)\right) e^{\Dot{c}t} \leq e^{\Dot{c}t} \beta \hat{C} \lambda
\end{aligned}\]
Integrating over $[k\lambda,t]$ yields
\[\begin{aligned}
 e^{\Dot{c}t}H_{\pi_\beta}(\pt)- e^{\Dot{c}k\lambda} H_{\pi_\beta}(\hat{\pi}_{k\lambda})\leq \frac{\beta \hat{C}}{\Dot{c}}\lambda (e^{\Dot{c} t}-e^{\Dot{c}k\lambda})
\end{aligned}\]
which implies
\begin{equation}
    H_{\pi_\beta}(\pt) \leq e^{\Dot{c}(k\lambda-t)} H_{\pi_\beta}(\hat{\pi}_{k\lambda}) +\frac{\beta \hat{C}}{\Dot{c}}\lambda (1-e^{\Dot{c}(k\lambda-t)}).
\end{equation}
Setting $t=n\lambda$ and $k=(n-1)$ leads to
\[H_{\pi_\beta}(\hat{\pi}_{n\lambda})\leq e^{-\Dot{c}\lambda } H_{\pi_\beta}(\hat{\pi}_{(n-1)\lambda}) +  \frac{\beta \hat{C}}{\Dot{c}}\lambda (1-e^{-\Dot{c}\lambda})\]
so by iterating over $n$,
\[H_{\pi_\beta} (\hat{\pi}_{n\lambda})\leq e^{-\dot{c} \lambda (n-1) } H_{\pi_\beta}(\pi_0) + \frac{\beta \hat{C}}{\Dot{c}}\lambda\]
which completes the proof.
\end{proof}
\begin{proof}[Proof of Corollary \ref{W2 rate}]
Since $\pi$ satisfies an LSI with constant $C_{LSI}$ using Talagrand inequality there holds
\[W_2(\mathcal{L}(\theta^\lambda_n),\pi_\beta)\leq \sqrt{2C_{LSI} H_{\pi_\beta}(\rho_n)}.\]
Using Corollary \ref{H-rate} yields the result.
\end{proof}
\section{Proof of result for excess risk problem}
In this Section we present two Lemmas that bound the terms $T_1$ and $T_2$ in \eqref{eq-optimization problem} and lead to Corollary  \ref{cor-excess risk}. The following have been rigorously proved in \citet{TUSLA},\citet{lim2021non}. We provide only the details that are specific for our work.
\begin{lemma}
    Let Let Assumption \aref{ass-pol lip}, \aref{ass-2dissip} hold. Then,
    \[\E [u(\bar{\theta}^\lambda_n)]-\E_{\pi_\beta}[u(x)]\leq C_1 W_2\left(\mathcal{L}\left(\Bar{\theta}^\lambda_n\right),\pi_\beta\right)\]
    where $C_1$ is given explicitly in the proof.
\end{lemma}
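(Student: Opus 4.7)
The plan is to reduce the excess-expectation gap to a Wasserstein-$2$ bound by exploiting the polynomial local Lipschitz continuity of $h=\nabla u$ together with uniform moment control on both the algorithm iterates and the target measure $\pi_\beta$.

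First I would establish a polynomial Lipschitz bound for $u$ itself. From \aref{ass-pol lip} applied with $y=0$, $|h(x)| \leq |h(0)| + L'(1+|x|)^{l'+1}$, so $h$ has polynomial growth of degree $l'+1$. Integrating along the segment $y+s(x-y)$ for $s\in[0,1]$ then yields a constant $\tilde L$ such that
\begin{equation*}
|u(x)-u(y)| \leq \tilde L\bigl(1+|x|^{l'+1}+|y|^{l'+1}\bigr)|x-y|, \quad \forall x,y\in\mathbb{R}^d.
\end{equation*}
Next, for any transference plan $\zeta\in\Pi(\mathcal{L}(\bar\theta_n^\lambda),\pi_\beta)$, with $(X,Y)\sim\zeta$, Cauchy--Schwarz gives
\begin{equation*}
\E[u(\bar\theta_n^\lambda)]-\E_{\pi_\beta}[u] \leq \tilde L\,\sqrt{\E\bigl(1+|X|^{l'+1}+|Y|^{l'+1}\bigr)^2}\,\sqrt{\E|X-Y|^2}.
\end{equation*}
Taking the infimum over $\zeta$ in the second factor produces exactly $W_2(\mathcal{L}(\bar\theta_n^\lambda),\pi_\beta)$.

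The remaining task is to bound the first factor uniformly in $n$ by a finite constant. For the algorithm iterates this is immediate: Lemma \ref{highmom} yields $\sup_n \E|\bar\theta_n^\lambda|^{2p} \leq \bar C_p$ for every $p\geq 1$, in particular for $p=l'+1$. For the target $\pi_\beta$, standard arguments based on \aref{ass-2dissip} give finite polynomial moments: one can either integrate the stationary Fokker--Planck identity $\int \langle h,x\rangle\,d\pi_\beta = d/\beta$ iteratively against $|x|^{2(p-1)}$ to get $\E_{\pi_\beta}|x|^{2p}<\infty$, or note that $\pi_\beta$ is stationary for the Langevin SDE whose moments are uniformly bounded by a Lyapunov argument using $V(x)=|x|^{2p}$ and the dissipativity bound $\langle h(x),x\rangle \geq a|x|^2-b$. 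Combining these two moment estimates defines the constant $C_1$ explicitly.

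The only delicate step is the moment bound for $\pi_\beta$ under only \aref{ass-2dissip}: one must be careful not to invoke any regularity of $h$ beyond what \aref{ass-pol lip} provides, but since both assumptions are available this is routine. I expect the main technical burden (and the place where constants become cluttered) to be in expanding $(1+|x|^{l'+1}+|y|^{l'+1})^2$ and tracking the explicit dependence of $C_1$ on $\tilde L$, $\bar C_{l'+1}$ and $\E_{\pi_\beta}|x|^{2(l'+1)}$; conceptually, however, the whole argument is an application of Kantorovich--Rubinstein-type duality combined with uniform-in-$n$ moment estimates.
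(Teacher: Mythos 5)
Your overall strategy coincides with the paper's: the paper simply defers to Lemma 8 of the TUSLA paper, which is precisely the coupling-plus-Cauchy--Schwarz argument you spell out, with the polynomial Lipschitz continuity of $u$ deduced from that of $h=\nabla u$ and the $W_2$ factor appearing from the infimum over transference plans. Where you diverge is in bounding the polynomial moments of $\pi_\beta$. The paper bounds $\E_{\pi_\beta}|x|^2$ via dissipativity through the Langevin SDE (citing Lemma 3 of Raginsky et al.\ and $W_2$-convergence of the SDE to $\pi_\beta$), and then lifts this to $\sigma_{2l'}$ using Theorem 3.4 of Aida--Masuda--Shigekawa, which is a Log-Sobolev-based concentration estimate and hence implicitly invokes $C_{LSI}$ (i.e.\ Assumption $\mathbf{B}$1) even though the lemma only lists \aref{ass-pol lip} and \aref{ass-2dissip} as hypotheses. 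Your alternative — iterating the stationary Fokker--Planck identity or using a Lyapunov argument with $V(x)=|x|^{2p}$ directly under 2-dissipativity — produces the same finiteness while staying strictly within the assumptions stated in the lemma, which is arguably cleaner. The trade-off is that the LSI route gives a sharper, dimension-explicit constant (essentially Gaussian moment scaling in $C_{LSI}$), whereas the dissipativity route yields cruder constants; in the paper's context, where LSI is always assumed elsewhere, the authors presumably preferred the sharper bound. One small bookkeeping difference: your integration argument naturally produces $\E_{\pi_\beta}|x|^{2(l'+1)}$ where the paper's cited constant uses $\sigma_{2l'}$; this is immaterial since both are finite, but it reflects a slightly different grouping inside the Cauchy--Schwarz step.
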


\begin{proof}
    Following the proof in \citet{TUSLA}, Lemma 8, one obtains
    \[\E [u(\bar{\theta}^\lambda_n)]-\E_{\pi_\beta}[u(x)]\leq C_1 W_2\left(\mathcal{L}\left(\Bar{\theta}^\lambda_n\right),\pi_\beta\right).\]
    where \[C_1=\left(\frac{a_1}{l'+1} \sqrt{\mathbb{E}\left|\theta_0\right|^{2 l}+C_{l}^{\prime}}+\frac{a_1}{l+1} \sqrt{\sigma_{2 l'}}+r_2\right) \]
    where $a_1,r_2$ depend on the coefficients in assumption \aref{ass-pol lip}.
and $\sigma_{2l'}$ is the ${2l'}$ moment of $\pi_\beta.$
    Using Theorem 3.4 in \citet{aida1994moment} one obtains \[\sigma_{2l'}\leq \left( \sqrt{\E_{\pi_\beta}[|x|^2]}+C'C_{LSI} (2l'2-2)\right)^p \]
The second moment can be bounded using the dissipativity condition, as by Lemma 3 in \cite{raginsky} the second moment of the Langevin SDE is bounded by  $\frac{b+\frac{d}{\beta}}{a}.$
Using the fact that the Langevin SDE converges to $\pi_\beta$ as $t\rightarrow \infty$ in $W_2$ distance, which implies converge of the respective second moments, one deduces that
\[\E_{\pi_\beta}[|x|^2]=\lim_{t\rightarrow \infty} \E |X_t|^2\leq \frac{b+\frac{d}{\beta}}{a}.\]
\end{proof}
\begin{lemma}
    Let $u^*$ the global minimum of $u$. Then, under Assumptions \aref{ass-pol lip}, \aref{ass-2dissip} there exists $C_2>0$ such that
    \[\E_{\pi_\beta} [u(x)]-u^* \leq \frac{d}{2\beta} \log (C_2 \beta) +\frac{\log 2}{\beta}. \]
\end{lemma}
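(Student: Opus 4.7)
The argument mirrors Lemma 8 of \cite{TUSLA} (also Proposition 11 of \cite{raginsky}); the plan is to highlight only the adaptations needed under the polynomial Lipschitz assumption, since the broad structure is classical. Let $\bar u(x):=u(x)-u^*\ge 0$ so that $\E_{\pi_\beta}[u]-u^*=\E_{\pi_\beta}[\bar u]$, and let $\tilde Z_\beta:=\int_{\mathbb{R}^d}e^{-\beta\bar u(x)}dx$. The strategy is to obtain a Gaussian-type lower bound on $\tilde Z_\beta$ centered at a minimizer, then convert it into an exponential tail estimate for $\bar u$ under $\pi_\beta$ whose integration produces a $(d/2\beta)\log(C_2\beta)$ scaling.

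First I would show that any minimizer $x^*$ of $u$ has bounded norm uniformly in $d$ and $\beta$: since $h(x^*)=0$, Assumption~\aref{ass-2dissip} gives $0=\langle h(x^*),x^*\rangle\ge a|x^*|^2-b$, hence $|x^*|^2\le b/a$. Next, writing $\bar u(x)=\int_0^1\langle h(x^*+s(x-x^*)),x-x^*\rangle\,ds$ and invoking \aref{ass-pol lip} together with $h(x^*)=0$, one derives a local quadratic upper bound $\bar u(x)\le K|x-x^*|^2$ for $|x-x^*|\le 1$, with $K=K(L',l',a,b)$ independent of $d$ and $\beta$. Restricting the integral defining $\tilde Z_\beta$ to a ball of radius $(K\beta)^{-1/2}$ and comparing with the full Gaussian integral then yields $\tilde Z_\beta \ge \tfrac{1}{2}(\pi/(K\beta))^{d/2}$ for $\beta$ large enough; the factor $1/2$ arising from the truncation is the source of the additive $\log 2$ term in the final bound.

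To conclude, I would control the expected suboptimality via $\E_{\pi_\beta}[\bar u]=\int_0^\infty \pi_\beta(\bar u>t)\,dt$. The tail probability is bounded by splitting $e^{-\beta \bar u}=e^{-\beta\bar u/2}\cdot e^{-\beta\bar u/2}$, giving $\pi_\beta(\bar u>t)\le 2 e^{-\beta t/2}\tilde Z_{\beta/2}/\tilde Z_\beta$. Combining the partition-function lower bound for $\tilde Z_\beta$ with a matching upper bound on $\tilde Z_{\beta/2}$ (obtained by integrating the tail of $\bar u$ against the dissipativity-controlled moment bound $\E_{\pi_\beta}|X|^2\le (b+d/\beta)/a$ derived in the preceding lemma) and integrating in $t$ produces the claimed inequality with an explicit $C_2$ depending only on $a,b,L',l'$.

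The main obstacle is that under only the \emph{polynomial} Lipschitz hypothesis \aref{ass-pol lip}, the quadratic comparison $\bar u(x)\le K|x-x^*|^2$ is only valid locally; keeping $K$ dimension-free requires the uniform bound $|x^*|\le\sqrt{b/a}$ supplied by dissipativity, and the truncation error at the boundary of the ball must be absorbed into the $\log 2$ term rather than contaminating the $d/(2\beta)\log(C_2\beta)$ leading order. This is exactly the adaptation of the argument of \cite{raginsky} needed beyond the globally smooth setting, and is the only step that goes beyond quoting the proofs of \cite{TUSLA,lim2021non}.
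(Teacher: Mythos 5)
The paper itself gives no proof here, only a citation to Lemma~4.9 of \citet{lim2021non}, so your proposal must be judged on its own merits. There is a genuine gap, and it is located precisely in the step you regard as routine: converting the tail bound into the claimed inequality.

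Your tail-integral step gives, after integrating $\pi_\beta(\bar u>t)\le e^{-\beta t/2}\,\tilde Z_{\beta/2}/\tilde Z_\beta$ over $t$, a bound of the form
\[
\E_{\pi_\beta}[\bar u]\;\le\;\frac{C}{\beta}\,\frac{\tilde Z_{\beta/2}}{\tilde Z_\beta},
\]
i.e.\ a constant times the \emph{ratio} of partition functions. But the best upper bound on $\tilde Z_{\beta/2}$ that dissipativity can deliver and the Gaussian lower bound on $\tilde Z_\beta$ both scale like $(c/\beta)^{d/2}$ with different constants $c$, so the ratio is of order $c_0^{d/2}$ for some $c_0>1$ (at best; if one instead uses a $\beta$-independent volume bound for $\tilde Z_{\beta/2}$, the ratio even grows like $(\text{const}\cdot\beta)^{d/2}$). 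In either case the resulting estimate is exponential in $d$ and cannot reduce to the target $\tfrac{d}{2\beta}\log(C_2\beta)+\tfrac{\log 2}{\beta}$, which is linear in $d$. The missing ingredient is the \emph{logarithm} of the partition-function ratio, which is what converts the exponent $d/2$ into a prefactor $d/2$. This log does not appear in the tail-integral route; it comes from exploiting the convexity of $\beta\mapsto -\log\tilde Z_\beta$ (equivalently $\E_{\pi_\beta}[\bar u]=-\tfrac{d}{d\beta}\log\tilde Z_\beta$ together with a secant bound), which yields $\E_{\pi_\beta}[\bar u]\le\tfrac{2}{\beta}\log\bigl(\tilde Z_{\beta/2}/\tilde Z_\beta\bigr)$, the form that actually produces $\tfrac{d}{2\beta}\log(C_2\beta)$ once the Gaussian comparison and dissipativity upper bound are inserted. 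As it stands, your argument would fail to give a bound that even tends to zero as $\beta\to\infty$ for $d\ge 3$.

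A second, smaller issue: you truncate the Gaussian lower bound to a ball of radius $(K\beta)^{-1/2}$ and claim this captures a fixed fraction of the Gaussian mass. It does not: the typical norm of a $d$-dimensional Gaussian with variance $(K\beta)^{-1}$ is of order $\sqrt{d/(K\beta)}$, so the ball you chose has vanishing mass as $d$ grows. One needs to truncate at a $d$-free radius (e.g.\ $1$, inside which the polynomial Lipschitz bound gives a $d$-independent quadratic envelope) together with a constraint of the form $\beta\gtrsim dK$ so the Gaussian is concentrated inside it, or else truncate at radius $\sqrt{d/(2\beta K)}$ and pay an $e^{-d/2}$ factor that is then absorbed into the $\log$. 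The preliminary observations you make — dissipativity forcing $|x^*|^2\le b/a$, and the local quadratic envelope from Assumption~\aref{ass-pol lip} being dimension-free — are correct and are indeed the right adaptations to the polynomial-Lipschitz setting, but they need to be plugged into the convexity/log-partition argument, not the raw tail integral.
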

\begin{proof}
See \citet{lim2021non}, Lemma 4.9.
\end{proof}
\section{Proof of Theorem \ref{Poincare constant general}}
Throughout this section we will assume that \aref{ass-derivbound}, \aref{ass-pol lip} and \bref{ass-deltau}-\bref{ass-Morse} and \eqref{eq-limitcond} are satisfied, unless otherwise specified. 
\begin{lemma}
  There exists $N\in \mathbb{N}$ such that the function $u$ has $N$ critical points.
\end{lemma}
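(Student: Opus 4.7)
The plan is to combine the far-field non-vanishing condition \eqref{eq-limitcond} with the Morse-type condition \bref{ass-Morse} to deduce that the zero set of $h=\nabla u$ is both compact and discrete, hence finite.

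First I would observe that \eqref{eq-limitcond} immediately localises the critical set: if $h(y)=0$ then $|y|<R$, so the set of critical points is contained in the closed ball $\overline{B}(0,R)$, which is compact. The main content of the argument is therefore to show that critical points are isolated inside this ball.

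Next I would use \bref{ass-Morse} to establish non-degeneracy. At any $y$ with $h(y)=0$, every eigenvalue $\bar\lambda_i(\nabla^2 u(y))$ satisfies $|\bar\lambda_i(\nabla^2 u(y))|\geq l^*>0$, so the Hessian $\nabla^2 u(y)$ is invertible. Since $h=\nabla u\in\mathcal{C}^1$ (indeed $u\in\mathcal{C}^4$ by the standing hypothesis of Section~\ref{Assumptions}), the Jacobian $J h(y)=\nabla^2 u(y)$ is invertible, and the inverse function theorem supplies an open neighbourhood $U_y$ of $y$ on which $h$ is a $\mathcal{C}^1$ diffeomorphism onto an open set containing $0=h(y)$. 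In particular $h$ vanishes at no point of $U_y$ other than $y$, so each critical point is isolated.

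Finally I would conclude by a compactness argument: the zero set $\{h=0\}$ is closed (as $h$ is continuous), contained in the compact ball $\overline{B}(0,R)$, and locally finite by the previous step. Any such discrete closed subset of a compact set is finite, so there exists $N\in\mathbb{N}$ with exactly $N$ critical points. The only place where real care is needed is the invocation of \bref{ass-Morse}: the assumption is phrased as a uniform lower bound on $|\bar\lambda_i|$ over all critical points simultaneously, but for the present lemma only the pointwise invertibility of $\nabla^2 u$ at each critical point is used, so no quantitative issue arises. There is no serious obstacle here; the statement is essentially a bookkeeping consequence of the hypotheses that will be used repeatedly in the remainder of Section~\ref{Poincare constant general}.
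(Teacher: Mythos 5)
Your proposal is correct and follows essentially the same route as the paper: bound the critical set inside $\overline{B}(0,R)$ via \eqref{eq-limitcond}, note it is closed hence compact, invoke the inverse function theorem via \bref{ass-Morse} to isolate each critical point, and conclude finiteness by a compactness argument (the paper phrases the last step as an explicit open-cover/finite-subcover argument, but this is the same observation that a discrete closed subset of a compact set is finite).
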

\begin{proof}
    Let $C$ the set of critical points of $u$. By condition \eqref{eq-limitcond}, one deduces that $C$ is bounded and since it is a zero set of a continuous function, it is also closed, thus compact.
    Assume that the set of critical points are infinite and can be described as $C=\cup_{i\in I} \{x_i\}$.
    Then, by the inverse function theorem (which holds locally due to \bref{ass-Morse}), there exists $\delta_i$ such that $B(x_i,\delta_i) $ doesn't contain another critical point.\\
    Since $C\subset  \cup_{i\in I}B(x_i,\delta_i) $ by compactness there exists a finite $J \subset I$ such that
    \[C\subset \cup_{i\in J}B(x_i,\delta_i) \] which essentially leads to \[ |C| \leq |J|<\infty.\]
\end{proof}
By \eqref{eq-limitcond} there holds $C\subset B(0,R).$
Let $K_{2,R}$ the Lipschitz constant of the gradient of $u$ on $B(0,2R)$ and $K_{3,R}:=\max\{\frac{2R}{l^*},K'_{3,R}\}$ and $K'_{3,R}$ is the Lipschitz constant of the Hessian of $u$ on $B(0,2R).$
Recall that $S$ is the set of critical points in $\mathbb{R}^d$ that are not local minimizers  and $\mathcal{C}=\{x^*\}\cup S$ where $x^*$ is the local minimizer. We define $AC_1=\{x\in B_R: d(x,C)\geq \frac{l^*}{K_{3,R}}\}$ and
$AC_{1,1}=\{x\in B_R: d(x,C)>\frac{l^*}{K_{3,R}}\}$.
Since the distance function is continuous and $\Bar{B}_R$ compact this is a compact set. As a result, the function $\frac{|h(x)|}{d(x,C)}$ obtains an
infimum on $AC_1$. Since away from critical points $h$ is non-zero
then,
\[C_{1,1}:=\inf_{AC{1,1}} \frac{|h(x)|}{d(x,C)}\geq \inf_{AC_1}\frac{|h(x)|}{d(x,C)}>0.\]
and $C_c=\min\{C_{1,1},\frac{l^*}{2}\}.$
For $A\geq \sqrt{32} \max\{2\sqrt{K_{2,R}d}/C_c,\frac{l^*}{8},\frac{8}{l^*}\}$ we define:
\begin{equation}
    \begin{aligned}
    U&=\{x\in \mathbb{R}^d : |x-x^*|^2< \frac{A^2}{\beta}\}\\
    E_r&=\{x\in \mathbb{R}^d: d(x,S)^2<\frac{A^2}{\beta}\}\\
    E_{4r}&=\{x\in \mathbb{R}^d: d(x,S)^2< 16\frac{A^2}{\beta}\}\\
    AC&=\{x\in \mathbb{R}^d: d(x,C)^2\geq \frac{A^2}{\beta}\}.
    \end{aligned}
\end{equation}
 Since $C\subset B(0,R)$, for $\beta\geq \frac{A^2}{R^2}$,
$U\subset B(0,2R)$ and $E_{4r}\subset B(0,2R)$.\\
Immediately  by assumption \aref{ass-derivbound} one has that $u$ is $K_{1,R}$-Lipschitz, $h$ $K_{2,R}$-Lipschitz and
$\nabla^2u$ $K_{3,R}$- Lipschitz on $U$ and $E_{4r}$ with constants $\mathcal{O}(R^{2l+1}).$\\
Since the  set of critical points is finite, we assume that there exists $\epsilon_0$ such that \[|x-y|\geq \epsilon_0 \quad \forall x,y \in C.\]
Then, if $\beta \geq 4A^2/\epsilon_0^2$, then
\[(AC \cup U)^c=E_r\]
The restriction for $\beta$ throughout the proof is \begin{equation}\label{eq-betamin}
    \beta\geq \beta_{\min}:=\max\{16A^2/\epsilon_0^2,2 K_{3,R} A^2/l^*,\frac{9C'd}{C_H^2}+4C'd,\frac{A^2}{R^2},(8 K_{3,R} A^3)^2\}
\end{equation}
The proof strategy of this section is sketched as follows. First we deduce a local Poincare inequality around the unique local minimum. Using this result  we are able to use a suitable Lyapunov function to infer a useful inequality on the set $AC\cup U$, i.e around the local minimum or away from  critical points.
Using some escape-time arguments and some results from PDE theory we manage to build Lyapunov functions for the sets around the saddle points and local maximizers. Finally, by building a suitable function to handle the boundary of the sets of interest we are able to connect all these results to infer a Poincare inequality in the whole space which is independent of the dimension and temperature.
\subsection{Poincaré on U}
Let $\pi_U$ the restricted measure on $U$.
If $\beta\geq 2 K_{3,R} A^2/l^*$ we can prove a Poincaré inequality on $U$.
\begin{lemma}
    Let $\pi_U$ the restricted probability measure $\pi_\beta$ on $U$. Then, if $\beta\geq 2 K_{3,R} A^2/l^*$ $\pi_U$ satisfies a Poincare inequality with cosntant $(\kappa_U)^{-1}:=\frac{2}{l^*}$
\end{lemma}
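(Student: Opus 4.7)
The plan is to apply the Bakry--\'Emery criterion (Theorem \ref{Bakry-Emery}) to $\pi_U$ on the convex domain $U$. Since Bakry--\'Emery only requires a uniform lower bound on the Hessian of the potential over a convex set, everything reduces to showing that $\nabla^2 u(x) \succeq \tfrac{l^*}{2} I_d$ for every $x \in U$.

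First, I would exploit the Morse-type assumption $\mathbf{C}$\ref{ass-Morse} at the unique local minimum $x^*$. Since $x^*$ is a local minimum (Assumption $\mathbf{C}$\ref{ass-unique minimum}) and $h(x^*) = 0$, the eigenvalues of $\nabla^2 u(x^*)$ are nonnegative. Combined with the lower bound $|\bar\lambda_i(\nabla^2 u(x^*))| \geq l^*$ from $\mathbf{C}$\ref{ass-Morse}, this forces $\nabla^2 u(x^*) \succeq l^* I_d$.

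Next I would use the Lipschitz continuity of the Hessian on $B(0,2R)$, which applies on $U$ since $\beta \geq A^2/R^2$ guarantees $U \subset B(0,2R)$. For $x \in U$, $|x - x^*| < A/\sqrt{\beta}$, so
\[
\|\nabla^2 u(x) - \nabla^2 u(x^*)\| \,\leq\, K_{3,R}\,|x-x^*| \,<\, \frac{K_{3,R}\,A}{\sqrt{\beta}}.
\]
Under the hypothesis $\beta \geq 2 K_{3,R} A^2/l^*$, the right-hand side is controlled by $l^*/2$ (using the built-in scaling of $K_{3,R}$), so Weyl's inequality yields $\nabla^2 u(x) \succeq (l^*/2) I_d$ throughout $U$.

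Finally, since $U$ is an open ball (hence convex), I would invoke the Bakry--\'Emery criterion applied to the restricted potential $\beta u$ on the convex domain $U$: the bound $\nabla^2(\beta u) \succeq \tfrac{\beta l^*}{2} I_d$ implies that $\pi_U$ satisfies a Poincar\'e inequality with constant $\varrho \geq l^*/2$, which in the paper's normalization (Definition \ref{Poincarédef}) is precisely $\kappa_U^{-1} = 2/l^*$. The only delicate point is the application of Bakry--\'Emery to a measure restricted to a convex set rather than the whole space; this is the standard Neumann version of the criterion, and the convexity of $U$ is exactly what is needed to ensure that the boundary contributions in the associated integration-by-parts vanish with the correct sign, so no extra assumption is required.
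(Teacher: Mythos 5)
Your proposal is essentially identical to the paper's proof: both establish $\nabla^2 u(x^*)\succeq l^* I_d$ from the Morse condition at the unique local minimum, perturb via the Lipschitz continuity of the Hessian on $B(0,2R)$ together with Weyl's eigenvalue inequality to get $\nabla^2 u \succeq (l^*/2)I_d$ on $U$, and then invoke the Bakry--\'Emery criterion (stated in the paper directly for a convex domain) on the ball $U$. The only added value in your write-up is the explicit justification that $\nabla^2 u(x^*)\succeq l^* I_d$ follows from combining nonnegativity at a local minimum with the Morse lower bound on absolute eigenvalues, and the remark about the Neumann version of Bakry--\'Emery — both of which the paper leaves implicit.
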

\begin{proof}
Using the inequality \[|\bar{\lambda}_i (A+B)-\bar{\lambda}_i(A)|\leq ||B||_2\]
for $A=\nabla u^2(x^*)$ $B=\nabla^2 u(x)-\nabla^2u(x^*)$ one obtains
\[\lambda_{min} u^2(x)=\lambda_{min}(A+B)\geq \lambda_{\min}(A)-|B|=
\lambda_{min} \nabla^2 u(x^*)-||\nabla^2 u(x)-\nabla^2u(x^*)||\]
Using the fact that $\nabla^2 u(x^*)\geq l^*$ and the $K_{3,R}$-Lipschitz continuity of $\nabla^2 u$ one deduces
\begin{equation}
\lambda_{min}(\nabla^2 u(x))\geq l^*-K_{3,R} |x-x^*|^2\geq l^*-K_{3,R}\frac{A^2}{\beta}\geq \frac{l^*}{2} \quad \forall x\in U.
\end{equation}
which leads to Poincaré inequality on $U$ with constant $\kappa_U=\frac{l^*}{2}$ via the Bakry-Emery Theorem, i.e Theorem \ref{Bakry-Emery}.
\end{proof}
\subsection[s]{Functional inequality on { $AC\cup U$}}
For the rest of the Section we will use an important Lemma.
\begin{lemma}\label{lemma-gradHess}
Assume that there exists $E\subset \mathbb{R}^d$ and $K_E>0$ such that
\[||\nabla^2u(x)-\nabla^2u(y)||\leq K_E |x-y| \quad \forall x,y \in E.\]
There holds,
\[ \left|h(x)-h(y)-\nabla^2 u(y)\left(x-y\right)\right|\leq \frac{K_E}{2} |x-y|^2 \quad \forall x,y \in E.\]
\end{lemma}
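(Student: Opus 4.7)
The plan is to use the integral form of the fundamental theorem of calculus applied to $h = \nabla u$ along the line segment from $y$ to $x$, and then invoke the Lipschitz estimate on $\nabla^2 u$. This is the standard quadratic Taylor remainder bound under a Hessian-Lipschitz hypothesis. Implicitly, one needs the segment $\{y + t(x-y) : t \in [0,1]\}$ to lie in $E$ so that the hypothesis can be applied; in the applications of this lemma in the paper the relevant sets ($U$, $E_{4r}$, etc.) are balls or sublevel-type neighborhoods for which this is automatic, so I will assume this (or that $E$ is convex) without further comment.

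First, I would write
\begin{equation*}
h(x) - h(y) = \int_0^1 \frac{d}{dt} h\bigl(y + t(x-y)\bigr)\, dt = \int_0^1 \nabla^2 u\bigl(y + t(x-y)\bigr)(x-y)\, dt,
\end{equation*}
using that $h = \nabla u$ is continuously differentiable. Subtracting the linear term $\nabla^2 u(y)(x-y) = \int_0^1 \nabla^2 u(y)(x-y)\, dt$ and combining under a single integral yields
\begin{equation*}
h(x) - h(y) - \nabla^2 u(y)(x-y) = \int_0^1 \bigl[\nabla^2 u\bigl(y + t(x-y)\bigr) - \nabla^2 u(y)\bigr](x-y)\, dt.
\end{equation*}

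Next, I would estimate the norm by passing the absolute value under the integral, applying the operator-norm inequality $|Av| \le \|A\|\,|v|$, and then invoking the Hessian-Lipschitz hypothesis with the pair $(y + t(x-y),\, y)$, both of which lie in $E$ by the convexity/segment assumption. Since $|y + t(x-y) - y| = t|x-y|$, one obtains
\begin{equation*}
\bigl|h(x) - h(y) - \nabla^2 u(y)(x-y)\bigr| \le \int_0^1 K_E\, t\, |x-y| \cdot |x-y|\, dt = K_E |x-y|^2 \int_0^1 t\, dt = \frac{K_E}{2}|x-y|^2,
\end{equation*}
which is the desired bound.

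There is essentially no genuine obstacle here; the only subtlety is the implicit convexity of $E$ (or more generally that the segment from $y$ to $x$ lies in $E$), which I would flag briefly in the proof. The argument is otherwise a direct one-line application of the Taylor-with-integral-remainder formula combined with the Lipschitz hypothesis on $\nabla^2 u$.
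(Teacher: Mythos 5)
Your proof is correct and is essentially identical to the paper's: both write the remainder $h(x)-h(y)-\nabla^2 u(y)(x-y)$ as an integral along the segment from $y$ to $x$ (the paper parametrizes via $g_{x,y}(t)=h(tx+(1-t)y)$, you via $y+t(x-y)$, which is the same thing) and then apply the Hessian-Lipschitz bound pointwise to get $\int_0^1 K_E t\,|x-y|^2\,dt=\tfrac{K_E}{2}|x-y|^2$. Your remark about the implicit convexity of $E$ (needed so the segment lies in $E$) is a small but legitimate point that the paper leaves unstated; it holds in every application since $E$ is taken to be a ball.
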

\begin{proof}
Fix $x,y \in E.$
    Let $g_{x,y}(t)=h(tx +(1-t)y)$. It is easy to see that
    \[g'_{x,y}(t)= \nabla^2 u(tx+(1-t)y) (x-y)\]
    and $h(x)=g_{x,y}(1)$ and $h(y)=g_{x,y}(0).$
    Putting all together,
    \[\begin{aligned}
        \left|h(x)-h(y)-\nabla^2 u(y)\left(x-y\right)\right|&=\left|g_{x,y}(1)-g_{x,y}(0)+g'_{x,y}(0)\right|\\&=\left|\int_0^1 g'_{x,y}(t)-g'_{x,y}(0) dt \right|\\&\leq
        \int_0^1 \left|g'_{x,y}(t)-g'_{x,y}(0)\right| dt
        \\&\leq \int_0^1||\nabla^2 u(tx+(1-t)y)-\nabla^2 u(y) || |x-y|dt
        \\&\leq \int_0^1 K_E t |x-y|^2 dt
        \\&\leq \frac{K_E}{2} |x-y|^2
    \end{aligned}\]
    where the last steps were derived by the global Lipschitz continuity of $\nabla^2 u$ on $E$.
\end{proof}
For the rest of the section we shall prove some results using the Lyapunov function \begin{equation}\label{eq-lyapdef}
    W:=exp(\frac{\beta u}{2}).
\end{equation}
\begin{lemma}
    For $\beta \geq  \frac{9C'd}{C_H^2}+4C'd,$ there holds
    \[\frac{LW}{W}\leq -C'd \quad \forall x\in AC\cap (B(0,R))^c.\]
\end{lemma}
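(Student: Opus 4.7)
The plan is to compute $LW/W$ directly from the definition of the generator $L$ and then bound it using the pointwise inequality on $\Delta u$ from Assumption $\mathbf{C}$\ref{ass-deltau} combined with the lower bound $|h(x)|\geq c_H$ on $(B(0,R))^c$ coming from \eqref{eq-limitcond}.

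First I would carry out the standard calculation for $W=\exp(\beta u/2)$. Since $\nabla W=(\beta/2)h\,W$ and
\[
\Delta W=\bigl((\beta/2)\Delta u+(\beta/2)^2|h|^2\bigr)W,
\]
the generator $Lf=\beta^{-1}\Delta f-\langle \nabla f,h\rangle$ yields, after the $(\beta^2/4)|h|^2$ terms from $\beta^{-1}\Delta W$ and from $-\langle\nabla W,h\rangle$ combine,
\[
\frac{LW}{W}=\frac{1}{2}\Delta u-\frac{\beta}{4}|h|^2.
\]
This is the key identity; everything else is an application of the assumed bounds to make the right-hand side $\leq -C'd$.

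Next, I would apply $\mathbf{C}$\ref{ass-deltau}, namely $\Delta u\leq 2C'd(1+|h|^2)$, to obtain
\[
\frac{LW}{W}\leq C'd+\Bigl(C'd-\frac{\beta}{4}\Bigr)|h|^2.
\]
On $(B(0,R))^c$, condition \eqref{eq-limitcond} gives $|h(x)|^2\geq c_H^2$. For $\beta\geq 4C'd$ the coefficient $(C'd-\beta/4)$ is nonpositive, so replacing $|h|^2$ by its lower bound $c_H^2$ only increases the right-hand side:
\[
\frac{LW}{W}\leq C'd+\Bigl(C'd-\frac{\beta}{4}\Bigr)c_H^2.
\]
Imposing $C'd+(C'd-\beta/4)c_H^2\leq -C'd$ and rearranging gives a lower bound on $\beta$ of the form $\beta\geq 4C'd+\frac{8C'd}{c_H^2}$, which is implied by the stated $\beta\geq \frac{9C'd}{C_H^2}+4C'd$ (the looser numerical constant absorbs the small slack).

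There is no real obstacle here beyond bookkeeping: the intersection with $AC$ is not actually used, since the required pointwise information on $|h|$ comes entirely from being outside $B(0,R)$, and the condition $\beta\geq 4C'd$ is already encoded in the hypothesis $\beta\geq \frac{9C'd}{C_H^2}+4C'd$. The only point to be careful about is the sign of $(C'd-\beta/4)$ before replacing $|h|^2$ by its lower bound; this monotonicity step is what forces the $4C'd$ contribution in the hypothesis on $\beta$.
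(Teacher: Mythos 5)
Your proof is correct and follows exactly the same route as the paper's: compute $LW/W=\tfrac12\Delta u-\tfrac{\beta}{4}|h|^2$ for $W=\exp(\beta u/2)$, then apply Assumption $\mathbf{C}$\ref{ass-deltau} and the lower bound $|h|\geq c_H$ from \eqref{eq-limitcond}. You add a useful observation the paper leaves implicit, namely that the sign of $(C'd-\beta/4)$ must be checked before substituting the lower bound on $|h|^2$, and that the intersection with $AC$ plays no role in this particular lemma.
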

\begin{proof}
By the definition of $W$ in \eqref{eq-lyapdef}, one notices that since $\nabla W=\frac{\beta}{2} W \nabla u,$
\[\langle\nabla W,\nabla u\rangle= \frac{\beta}{2} W \langle \nabla u,\nabla u\rangle=\frac{\beta}{2} W |h(x)|^2.\]
In addition, \[\Delta W=W\left(\frac{\beta^2}{4} |h(x)|^2 +\frac{\beta }{2 } \Delta u \right). \]
Recalling that $LW:=\frac{1}{\beta} \Delta W -\langle \nabla W,\nabla u\rangle$
  one deduces that \[LW/W=\frac{1}{2}\Delta u -\frac{\beta}{4} |h(x)|^2.\]
    By our assumption \bref{ass-deltau} and \eqref{eq-limitcond} one has that
\[ \Delta u\leq 2C'd(1+|h(x)|^2)\]  and \[|h(x)|\geq C_H\]
    so for $\beta\geq \frac{9C'd}{C_H^2}+4C'd,$ one notices that
    \begin{equation}\label{eq-lyapoutball}
        LW/W \leq -C'd
    \end{equation}
\end{proof}
\begin{lemma}\label{lemmaAC}
  There holds,  \[LW/W\leq -\min\{C',\frac{1}{2}K_{2,R}\}d \quad \forall x \in AC.\]
\end{lemma}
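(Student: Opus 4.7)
The plan is to partition $AC$ as $(AC\cap B(0,R))\cup (AC\cap B(0,R)^c)$. On the outer piece the preceding lemma already gives $LW/W\leq -C'd$ under the assumed lower bound on $\beta$, so the work is to establish a comparable estimate on the inner piece $AC\cap B(0,R)$ and then take the minimum of the two constants.

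For the inner piece I would reuse the identity
\[
\frac{LW}{W}=\frac{1}{2}\Delta u-\frac{\beta}{4}|h(x)|^2
\]
already derived in the preceding lemma. Since $x\in B(0,R)\subset B(0,2R)$, the $K_{2,R}$--Lipschitz continuity of $h$ on $B(0,2R)$ yields $\|\nabla^2 u(x)\|\leq K_{2,R}$, hence $\Delta u(x)\leq K_{2,R}d$. The heart of the argument is a pointwise lower bound on $|h(x)|$ on $AC\cap B(0,R)$, for which I would fix $x$, let $y_x\in C$ be a closest critical point, and invoke Lemma \ref{lemma-gradHess} with $K_E=K_{3,R}$ on $B(0,2R)$; since $h(y_x)=0$, this gives
\[
|h(x)-\nabla^2 u(y_x)(x-y_x)|\leq \frac{K_{3,R}}{2}|x-y_x|^2.
\]
By assumption \bref{ass-Morse}, all eigenvalues of $\nabla^2 u(y_x)$ at any critical point have absolute value at least $l^*$, so $|\nabla^2 u(y_x)v|\geq l^*|v|$. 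A reverse-triangle estimate then yields $|h(x)|\geq (l^*/2)|x-y_x|=(l^*/2)d(x,C)$ whenever $|x-y_x|\leq l^*/K_{3,R}$; in the complementary regime $d(x,C)>l^*/K_{3,R}$ the bound $|h(x)|\geq C_{1,1}\,d(x,C)$ follows directly from the definition of $C_{1,1}$. Combining both cases gives $|h(x)|\geq C_c\,d(x,C)$ on $AC\cap B(0,R)$, which on $AC$ upgrades to $\beta|h(x)|^2\geq C_c^2 A^2$.

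Plugging back in,
\[
\frac{LW}{W}(x)\leq \frac{K_{2,R}}{2}d-\frac{C_c^2 A^2}{4},
\]
and the standing assumption $A\geq \sqrt{32}\cdot 2\sqrt{K_{2,R}d}/C_c$ makes the negative term absorb the positive one with room to spare, yielding $LW/W\leq -\tfrac{1}{2}K_{2,R}d$ on $AC\cap B(0,R)$. Taking the minimum with the outer bound from the preceding lemma delivers the stated inequality with constant $\min\{C',K_{2,R}/2\}d$.

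The main obstacle I anticipate is the case split based on $|x-y_x|$ versus $l^*/K_{3,R}$: one has to check that the two regimes glue together through the single constant $C_c=\min\{C_{1,1},l^*/2\}$, and that the lower bound on $A$ (and hence on $\beta$, through $\beta_{\min}$) is strong enough to make the $|h|^2$ term dominate uniformly, including at the interface between the inner and outer pieces of $AC$. Everything else is bookkeeping once the Morse-type estimate near critical points is in hand.
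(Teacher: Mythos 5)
Your proposal is correct and mirrors the paper's proof almost step for step: the same partition of $AC$ into $B(0,R)$ and its complement, the same identity $LW/W=\tfrac12\Delta u-\tfrac{\beta}{4}|h|^2$ bounded via $\Delta u\leq K_{2,R}d$, the same two-regime lower bound $|h(x)|\geq C_c\,d(x,C)$ obtained from Lemma~\ref{lemma-gradHess} and the Morse eigenvalue bound near $C$ and from the definition of $C_{1,1}$ away from $C$, and the same absorption of the positive term by the $A$-dependent negative term. The only cosmetic difference is that you apply Lemma~\ref{lemma-gradHess} on $E=B(0,2R)$ while the paper uses $E=B(0,R)$; both work since $x$ and $y_x$ lie in $B(0,R)$.
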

\begin{proof}
Recall that \begin{equation}\label{eq-C11}
    |h(x)|\geq C_{1,1} d(X,c) \quad \forall x\in AC_{1,1}.
\end{equation}
Let $x\in B(0,R)\cap AC_{1,1}^c.$ There exists $y\in C$ such that \[|x-y|= d(x,C).\]
Since $x,y \in B(0,R)$ using Lemma \ref{lemma-gradHess} for $E:=B(0,R)$ and $K_E:=K_{3,R}$ one obtains
\begin{equation}
\begin{aligned}
|h(x)|&\geq  |\nabla^2u(y)(x-y)|- |h(x)-\nabla^2u(y)(x-y)|\\&\geq |\nabla^2u(y)(x-y)|- \frac{K_{3,R}}{2}|x-y|^2
\end{aligned}
\end{equation}
Let $\{u_i\}_{i=1}^d$ be the orthonormal eigenvectors of $\nabla^2 u(y).$ Then, $x-y=\sum_{i=1}^d c_i u_i$ for some $c_i.$
Writing \[\begin{aligned}
    |\nabla^2 u(y) (x-y)|^2&=|\nabla^2u(y)\sum_{i=1}^d c_i u_i|^2\\&=|\sum_{i=1}^d c_i\bar{\lambda}_i u_i|^2\\&=\sum_{i=1}^d c_i^2 \bar{\lambda}_i^2 \\&\geq (l^*)^2\sum_{i=1}^d c_i^2
    \\&=(l^*)^2 |x-y|^2
\end{aligned}\]
which leads to 
\begin{equation}\label{eq-dxc2}
    |h(x)|\geq l^* |x-y| -\frac{K_{3,R}}{2} |x-y|^2\geq \frac{l^*}{2}d(x,C) \quad \forall x\in B(0,R)\cap AC_{1,1}^c.
\end{equation}
Combining \eqref{eq-C11}, \eqref{eq-dxc2} one deduces that 
\begin{equation}\label{eq-gradistball}
    |h(x)|\geq C_{c}d(x,C) \quad \forall x \in B(0,R)
\end{equation}
where $C_{c}=\min\{C_{1,1},\frac{l^*}{2}\}.$\\
 Using the fact that $\nabla u$ is $K_{2,R}$-Lipschitz implies a that \[||\nabla^2 u(x)||\leq K_{2,R}, \quad \forall x \in B(0,R).\]
 Using this and \eqref{eq-gradistball} one deduces $\forall x \in B(0,R)\cap AC,$
\begin{equation}
\begin{aligned}
LW&=(\frac{1}{2}\Delta u(x)-\frac{\beta}{4}|h(x)|^2)W\\&\leq
    (\frac{1}{2}K_{2,R}d -\frac{\beta}{4} C_c^2 d(x,C)^2) W
    \\&\leq (\frac{1}{2}K_{2,R}d -\frac{1}{4}C_c^2 A^2)W
\end{aligned}
\end{equation}
which yields
\begin{equation}\label{eq-ACBR}
    LW\leq -\frac{1}{2}K_{2,R}d W \quad \forall x\in AC\cap B(0,R),
\end{equation}
which coupled with \eqref{eq-lyapoutball} yields
\[LW/W\leq -\min\{C',\frac{1}{2}K_{2,R}\}d \quad \forall x \in AC.\]
\end{proof}

\begin{lemma}\label{eq-lyap1}
    There holds \[\frac{LW}{W}\leq -\theta  + b_0 \mathds{1}_U \quad \forall x \in AC \cup U\]
    where $\theta=\min\{C',\frac{1}{2}K_{2,R}\}d$
    and $b_0=(K_{2,R}+\min\{C',\frac{1}{2}K_{2,R}\})d$
   \end{lemma}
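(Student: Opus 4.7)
The plan is to split the region $AC\cup U$ into the two pieces $AC$ and $U\setminus AC$, and in each piece bound the quantity $LW/W$ directly from the closed form expression
\[
\frac{LW}{W}=\frac{1}{2}\Delta u(x)-\frac{\beta}{4}|h(x)|^2,
\]
which was already computed in the derivation preceding Lemma \ref{lemmaAC}.

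First I would observe that Lemma \ref{lemmaAC} already gives $LW/W\leq -\min\{C',\tfrac12 K_{2,R}\}d=-\theta$ on all of $AC$. Since $b_0\ge 0$, this trivially implies $LW/W\leq -\theta+b_0\mathds{1}_U$ on $AC$, regardless of whether the point also lies in $U$. So the only real work left is the estimate on $U\setminus AC$, where we can no longer rely on the quadratic gradient term to dominate.

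For the $U$ estimate, the key ingredient is the bound $U\subset B(0,2R)$, which holds under the standing restriction $\beta\geq A^2/R^2$ included in \eqref{eq-betamin}. On $B(0,2R)$ the gradient $h=\nabla u$ is $K_{2,R}$-Lipschitz, hence $\|\nabla^2 u(x)\|\leq K_{2,R}$ and so $\Delta u(x)\leq d\,\|\nabla^2 u(x)\|\leq K_{2,R}d$ for every $x\in U$. Discarding the non-positive term $-\tfrac{\beta}{4}|h|^2$ then yields $LW/W\leq \tfrac12 K_{2,R}d$ on $U$. Rewriting this as $-\theta+(\theta+\tfrac12 K_{2,R}d)$ and using $\theta=\min\{C',\tfrac12 K_{2,R}\}d\leq \tfrac12 K_{2,R}d\leq K_{2,R}d$, one checks $\theta+\tfrac12 K_{2,R}d\leq b_0$, so $LW/W\leq -\theta+b_0$ on $U$.

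Combining the two cases gives the stated pointwise inequality on $AC\cup U$. There is no genuine obstacle here: the entire argument is a two-line case split, with the only substantive input being the already-established Lemma \ref{lemmaAC} for the $AC$ side and the inclusion $U\subset B(0,2R)$ (which is what makes the local Lipschitz constant $K_{2,R}$ applicable) for the $U$ side. The mild subtlety worth spelling out is why $\theta+\tfrac12 K_{2,R}d\leq b_0$: this is immediate from the definition $b_0=(K_{2,R}+\min\{C',\tfrac12 K_{2,R}\})d$ together with $\tfrac12 K_{2,R}\leq K_{2,R}$.
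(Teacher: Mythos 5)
Your proposal is correct and follows the same route as the paper: bound $LW/W$ on $U$ by $\tfrac12 K_{2,R}d$ using $U\subset B(0,2R)$ and $\Delta u \leq d\,\|\nabla^2 u\|$, apply Lemma \ref{lemmaAC} on $AC$, and verify the elementary arithmetic inequality $\tfrac12 K_{2,R}d \leq -\theta + b_0$. The only difference is that you spell out the case split and the final numerical check more explicitly than the paper does.
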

   \begin{proof}
Noticing that for $x \in U$, since $U\subset B(0,2R),$ \[\frac{LW}{W}\leq\frac{1}{2}\Delta u(x)\leq \frac{1}{2}K_{2,R}d,\] using 
Lemma \ref{lemmaAC} there follows
\[ \frac{LW}{W}\leq -\min\{C',\frac{1}{2}K_{2,R}\}d  + (K_{2,R}+\min\{C',\frac{1}{2}K_{2,R}\})d \mathds{1}_U \quad \forall x\in AC\cup U.\]

\end{proof}

\begin{proposition}\label{poincaretype}
    For all $g \in H^1(\mathbb{R}^d)$ such that $g=0$ on $\partial (AC\cup U)$, there holds
\[ \int_\acu g^2 d\pi_{\beta}\leq \frac{1}{\theta}\frac{1}{\beta} \int_{\mathbb{R}^d} |\nabla g|^2 d\pi_{\beta} +\frac{b_0}{\theta}\int_U g^2 d\pi_{\beta}.\]
\end{proposition}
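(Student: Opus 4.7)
The plan is to carry out the classical Lyapunov method for Poincar\'e-type inequalities, applied to the Lyapunov function $W=\exp(\beta u/2)$ already introduced and controlled by Lemma \ref{eq-lyap1}. Starting from the pointwise bound $-LW/W\geq \theta - b_0\mathds{1}_U$ valid on $AC\cup U$, I multiply by $g^2$ and integrate against $\pi_\beta$ to get
\[
\theta\int_{AC\cup U} g^2\,d\pi_\beta \;\leq\; \int_{AC\cup U}\frac{-LW}{W}\,g^2\,d\pi_\beta \;+\; b_0\int_U g^2\,d\pi_\beta.
\]
The whole task then reduces to bounding the first right-hand term by $\tfrac{1}{\beta}\int_{\mathbb{R}^d}|\nabla g|^2\,d\pi_\beta$.

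For that bound, I would rewrite the integral as $\int_{AC\cup U}(-LW)(g^2/W)\,d\pi_\beta$ and apply the symmetry of $L$ with respect to $\pi_\beta$. Since $g$ vanishes on $\partial(AC\cup U)$, so does the test function $g^2/W$, killing the finite piece of the boundary contribution, while decay of $\pi_\beta$ at infinity (which makes $e^{-\beta u}$ and $e^{-\beta u/2}=W\,d\pi_\beta/dx$ both integrable in view of the growth of $u$ implicit in the setup of Section 9) handles the remainder. Integration by parts then yields
\[
\int_{AC\cup U}\frac{-LW}{W}\,g^2\,d\pi_\beta \;=\; \frac{1}{\beta}\int_{AC\cup U}\Bigl\langle\nabla\!\left(\tfrac{g^2}{W}\right),\nabla W\Bigr\rangle d\pi_\beta.
\]
Expanding $\nabla(g^2/W)=2g\nabla g/W - g^2\nabla W/W^2$ and inserting into the inner product produces a cross term $2g\langle\nabla g,\nabla W\rangle/W$ and a square term $-g^2|\nabla W|^2/W^2$. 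Applying Young's inequality $2g\langle\nabla g,\nabla W\rangle/W\leq |\nabla g|^2 + g^2|\nabla W|^2/W^2$ makes the two $|\nabla W|^2/W^2$ terms cancel exactly, leaving the clean bound $\tfrac{1}{\beta}\int_{AC\cup U}|\nabla g|^2\,d\pi_\beta\leq\tfrac{1}{\beta}\int_{\mathbb{R}^d}|\nabla g|^2\,d\pi_\beta$. Substituting back into the first display and dividing by $\theta$ gives exactly the claimed inequality.

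I expect the main technical obstacle to be the rigorous justification of the integration by parts on the unbounded set $AC\cup U$. The finite part of $\partial(AC\cup U)$ is handled by the assumption $g|_{\partial(AC\cup U)}=0$, but one must also argue that no contribution comes from infinity. The cleanest way is a cutoff/localisation argument: intersect $AC\cup U$ with an increasing family of balls $B(0,R_n)$, perform integration by parts on each $(AC\cup U)\cap B(0,R_n)$ where it is classical, and pass to the limit, using that $g\in H^1(\mathbb{R}^d)$ together with the Gaussian-type decay of $e^{-\beta u/2}$ (implied by the growth of $|h|$ from \eqref{eq-limitcond} and assumption \bref{ass-deltau}) guarantees that the surface integrals over $\partial B(0,R_n)$ vanish as $R_n\to\infty$. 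Beyond this point, all remaining manipulations are pointwise and algebraic.
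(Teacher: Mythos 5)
Your proposal reproduces the paper's proof essentially verbatim: starting from the Lyapunov bound of Lemma \ref{eq-lyap1}, integrating by parts (equivalently, using the integration-by-parts formula $\int(-LW)(g^2/W)\,d\pi_\beta=\frac{1}{\beta}\int\langle\nabla W,\nabla(g^2/W)\rangle\,d\pi_\beta$, with the boundary term killed by $g|_{\partial(\acu)}=0$), and then expanding $\nabla(g^2/W)$ and applying Young's inequality to cancel the $g^2|\nabla W|^2/W^2$ terms. Your additional remarks about a cutoff argument to handle the unboundedness of $\acu$ are sound and are in fact slightly more careful than the paper's terse appeal to the divergence theorem, but this is a small technical refinement of the same argument rather than a different route.
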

\begin{proof}
Due to Lemma \ref{eq-lyap1} one deduces
\begin{equation}\label{eq-poingen}
     \begin{aligned}
        \int_\acu g^2 d\pi_{\beta} &\leq \frac{1}{\theta}\int_\acu-\frac{LW}{W}g^2 d\pi_{\beta} +\frac{b_0}{\theta} \int_U g^2 d\pi_{\beta}
    \end{aligned}
\end{equation}
Let $Z_\pi=\int_{\mathbb{R}^d} e^{-\beta u(x)}dx$. 
   One notices that,
   \[ \begin{aligned}
       \frac{1}{\theta}\int_{\acu}-\frac{LW(x)}{W(x)}g^2(x) d\pi_{\beta}(x) &= -\frac{1}{\theta}\frac{1}{\beta} Z_\pi^{-1}\int_{\acu} \Delta W(x)  (g^2(x) e^{-\beta u(x)}/W(x))dx \\&+ \frac{1}{\theta} \int_{\acu} \frac{g^2(x)}{W(x)}\langle  \nabla W(x),\nabla u(x)\rangle d\pi_{\beta}(x)
   \end{aligned}\]
   Since $(AC\cup U)^c=\cup_{y\in S} B(y,\frac{A}{\sqrt{\beta}})$ where the balls are disjoint it has piecewise smooth boundary so the divergence theorem can be applied.
   Using the divergence theorem for the first term one obtains
   \begin{equation}\label{eq-calforball}
       \begin{aligned}
        \frac{1}{\theta}\int_{\acu}-\frac{LW(x)}{W(x)}g^2(x) d\pi_{\beta}(x) &=-
    \frac{1}{\theta} \frac{1}{\beta}   \int_{\partial(\acu)}\frac{g^2(x)}{W(x)} \frac{\partial W}{\partial n}(x) d\pi_{\beta}(x) \\&+ \frac{1}{\theta}\frac{1}{\beta}\int_{\acu} \langle \nabla W(x), \nabla  (g^2 Z_\pi^{-1} e^{-\beta u}/W)(x) \rangle dx  \\&+
    \frac{1}{\theta}\int_{\acu} \frac{g^2(x)}{W(x)}\langle  \nabla W(x),\nabla u(x)\rangle d\pi_{\beta}(x)
    \\&=-
    \frac{1}{\theta} \frac{1}{\beta}   \int_{\partial(\acu)}\frac{g^2(x)}{W(x)} \frac{\partial W}{\partial n}(x) d\pi_{\beta}(x) \\&+ \frac{1}{\theta}\frac{1}{\beta}\int_{\acu} \langle \nabla W(x), \nabla  (g^2/W)(x) \rangle d\pi_\beta(x) \\&-\frac{1}{\theta} \int_{\acu} \frac{g^2(x)}{W(x)}\langle  \nabla W(x),\nabla u(x)\rangle d\pi_{\beta}(x)
    \\&+\frac{1}{\theta}\int_{\acu} \frac{g^2(x)}{W(x)}\langle  \nabla W(x),\nabla u(x)\rangle d\pi_{\beta}(x)
    \\&=-\frac{1}{\theta} \frac{1}{\beta}   \int_{\partial(\acu)}\frac{g^2(x)}{W(x)} \frac{\partial W}{\partial n}(x) d\pi_{\beta}(x) \\&+ \frac{1}{\theta}\frac{1}{\beta}\int_{\acu} \langle \nabla W(x), \nabla  (g^2/W)(x) \rangle d\pi_\beta(x)
    \end{aligned}
   \end{equation}
   Using the fact that $g=0$ on the boundary of $\acu$
   the first term vanishes so after further calculations one obtains
   \begin{equation}\label{eq-argum}
       \begin{aligned}
       \frac{1}{\theta}\int_{\acu}-\frac{LW}{W}g^2 d\pi_{\beta} &=\frac{1}{\theta}\frac{1}{\beta} \int_\acu \langle \nabla W, \nabla \left( \frac{g^2}{W}\right)\rangle d\pi_{\beta}\\&=\frac{1}{\theta}\frac{1}{\beta}  \int_\acu \left( 2 \frac{g}{W} \langle \nabla W,\nabla g\rangle-\frac{g^2}{W^2}|\nabla W|^2\right) d\pi_{\beta}
       \\&\leq\frac{1}{\theta}\frac{1}{\beta} \int_\acu \left( |\nabla g|^2 +\frac{g^2}{W^2}|\nabla W|^2-\frac{g^2}{W^2}|\nabla W|^2\right) d\pi_{\beta}
       \\&= \frac{1}{\theta}\frac{1}{\beta} \int_\acu |\nabla g|^2 d\pi_{\beta} 
       \\&\leq \frac{1}{\theta}\frac{1}{\beta}\int_{\mathbb{R}^d} |\nabla g|^2 d\pi_{\beta}.
   \end{aligned}
   \end{equation}
   Combining \eqref{eq-poingen} and \eqref{eq-argum} completes the proof.
\end{proof}
\subsubsection[Creating a Lyapunov function near Saddle points]{Creating a Lyapunov function on $(AC\cup U)^c$}
To explore the remaining space, a new Lyapunov function needs to be constructed. This function will be based on the probabilistic representation of solution to Dirichlet PDE problems.
\begin{proposition}[\citet{freidlin1985functional}, Theorem 2.1 and Remark 2 and 3 p.127-130]\label{prop-solution}
    Let $E$ a  bounded set with smooth boundary.
    Let $Z_t$ be the solution of the Langevin SDE \[dZ_t=-h(Z_t)dt+\sqrt{\frac{2}{\beta}}dB_t\]
    with initial condition $Z_0.$
    Define $\tau_{E^c}=\inf\{t\geq 0: Z_t \notin E\}.$
    Let $k>0.$
    Assume that \[\sup_{x\in E }\E_x \exp{(2k \tau_{E^c})}:=\sup_{x\in E }\E \left[\exp{(2k \tau_{E^c})}\big|Z_0=x\right]<\infty,  \quad \forall x \in E\]
    Then, the function $W_y(x)=\E_x(exp{(k\tau_{E^c)}})$
    solves the Dirichlet problem
   \[ \begin{array}{cc}
        LW &=-kW \quad x\in E \\
         W&=1 \quad x \in \partial E
    \end{array}\]
\end{proposition}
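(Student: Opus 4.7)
The plan is to establish this through a standard Feynman--Kac argument: first construct a classical solution $\tilde W$ to the Dirichlet problem via elliptic PDE theory, then identify $\tilde W$ with the probabilistic expectation via It\^o's formula and optional stopping. The result is classical (and cited here from Freidlin--Wentzell), so the key is to see how the moment hypothesis $\sup_{x\in E}\E_x\exp(2k\tau_{E^c})<\infty$ enters both the solvability of the PDE and the martingale argument.

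First I would establish existence of a classical solution. Since $E$ is bounded with smooth boundary and $-\nabla u$ is smooth (hence bounded) on $\bar E$, the operator $L=\beta^{-1}\Delta-\langle\nabla u,\nabla\cdot\rangle$ is uniformly elliptic with smooth coefficients on $E$. The hypothesis forces $2k$ to lie strictly below the principal Dirichlet eigenvalue of $-L$ on $E$, because the Laplace transform of $\tau_{E^c}$ has radius of convergence equal to that eigenvalue. In particular $k$ is not a Dirichlet eigenvalue, so standard Fredholm/Schauder theory yields a unique $\tilde W\in C^2(E)\cap C(\bar E)$ solving $L\tilde W=-k\tilde W$ on $E$ with $\tilde W=1$ on $\partial E$.

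Next I would identify $\tilde W$ with $\E_x\exp(k\tau_{E^c})$. Applying It\^o to $M_t:=\tilde W(Z_t)\exp(kt)$ gives
\[
dM_t=e^{kt}(L\tilde W+k\tilde W)(Z_t)\,dt+e^{kt}\bigl\langle\nabla \tilde W(Z_t),\sqrt{2/\beta}\,dB_t\bigr\rangle,
\]
so by the PDE the drift vanishes and $M_{t\wedge\tau_{E^c}}$ is a local martingale. Stopping at $\tau^n:=\tau_{E^c}\wedge n$ gives $\tilde W(x)=\E_x M_{\tau^n}$. Since $\tilde W$ is bounded on $\bar E$ by elliptic estimates, Cauchy--Schwarz combined with the $2k$-moment bound shows $\sup_n\E_x|M_{\tau^n}|^2\le\|\tilde W\|_\infty^2\,\E_x e^{2k\tau_{E^c}}<\infty$, so $\{M_{\tau^n}\}$ is uniformly integrable. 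Passing to $n\to\infty$ and using $\tilde W\equiv 1$ on $\partial E$ yields
\[
\tilde W(x)=\E_x\!\left[\tilde W(Z_{\tau_{E^c}})e^{k\tau_{E^c}}\right]=\E_x e^{k\tau_{E^c}},
\]
which is the desired identity.

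The main obstacle is the first step: ensuring the PDE admits a classical solution, which requires ruling out $k$ being a Dirichlet eigenvalue of $-L$. This is precisely where the \emph{quantitative} strength of the hypothesis (the factor $2$ in $\exp(2k\tau_{E^c})$, rather than only $\exp(k\tau_{E^c})$) becomes essential. It provides a factor-of-two spectral gap that both guarantees solvability of the PDE and delivers the uniform integrability needed to justify optional stopping; without it the martingale argument breaks down at the boundary between local and genuine martingality.
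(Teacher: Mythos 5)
The paper does not actually prove this proposition; it is quoted directly from Freidlin--Wentzell (their Theorem 2.1 together with the remarks on pp.\ 127--130), and the paper uses it as a black box in Corollary \ref{escape lyapunov}. Your reconstruction is correct and self-contained: building a classical solution $\tilde W$ of the Dirichlet problem first (since $L$ has smooth coefficients on the compact $\bar E$ and $k$ sits strictly below the principal Dirichlet eigenvalue $\lambda_1$ of $-L$ on $E$), and then identifying $\tilde W$ with $\E_x e^{k\tau_{E^c}}$ by applying It\^o to $M_t=\tilde W(Z_t)e^{kt}$, stopping at $\tau^n=\tau_{E^c}\wedge n$, and passing to the limit. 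This is a legitimate alternative route to the one typically followed in Freidlin--Wentzell, where one starts from the probabilistic expression $W(x)=\E_x e^{k\tau_{E^c}}$ and then verifies it satisfies the PDE via the strong Markov property and elliptic regularity; your ``PDE first'' version sidesteps the a priori regularity of $W$ entirely.

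Your closing diagnosis of why the factor $2$ is ``essential'' is the one point I would push back on. The Laplace transform $\lambda\mapsto\E_x e^{\lambda\tau_{E^c}}$ for an interior $x$ has radius of convergence exactly $\lambda_1$, so even the weaker assumption $\sup_{x\in E}\E_x e^{k\tau_{E^c}}<\infty$ already forces $k<\lambda_1$ and makes your Fredholm/Schauder step go through; you do not need $2k<\lambda_1$ to rule out $k$ being an eigenvalue. Likewise, for passing to the limit you already have the pointwise domination $|M_{\tau^n}|\le\|\tilde W\|_\infty e^{k\tau_{E^c}}$ with $\tau^n\uparrow\tau_{E^c}$, so dominated convergence gives $\E_x M_{\tau^n}\to\E_x e^{k\tau_{E^c}}$ under the weaker $k$-moment hypothesis, without the $L^2$/uniform-integrability upgrade. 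The $2k$-form of the hypothesis is simply what the paper obtains naturally from the exponential escape-time tail bound (Lemma \ref{lemma-esc}) through Theorem \ref{subexponential}; it is convenient, not structurally necessary for the proposition.
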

To this end, we shall create a Lyapunov function bounding the escape time probabilities of a ball near each saddle point.
For the rest of the section we set $r:=r_{A,\beta}:=\frac{A}{\sqrt{\beta}}$ and for $y\in S$ we denote $B^y_{r}=B(y,r_{A,\beta}).$
\begin{lemma}\label{lemma-esc}
Let $y\in S.$
Let $B^y_{4r}:=B(y,4r_{A,\beta}).$
There exist $c_1>0$ such that
$$
\mathbb{P}\left[\tau_{(B^y_{4r})^c} \geq t \mid Z_0=x\right] \leq c_1 e^{-\frac{l^*}{2} t}, \quad \forall t \geq 0, \forall x \in B^y_{4r}
$$
\end{lemma}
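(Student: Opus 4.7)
The geometric intuition is that near a saddle or local maximum $y \in S$, the Langevin dynamics is unstable along the eigendirection of $\nabla^2 u(y)$ with the most negative eigenvalue (which is at most $-l^*$ by Assumption \bref{ass-Morse}), so the process escapes the small ball $B^y_{4r}$ of radius $\sim 1/\sqrt\beta$ on a time scale of order $1/l^*$, with exponential tails. The plan is to reduce to a one-dimensional analysis along this unstable direction, linearize the drift using the Hessian control at $y$, and invoke Gaussian anti-concentration together with an iteration via the strong Markov property.

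First, let $e_1$ be a unit eigenvector of $\nabla^2 u(y)$ corresponding to an eigenvalue $\bar\lambda_1 \leq -l^*$, and set $Y_t := \langle Z_t - y, e_1\rangle$. Itô's formula gives $dY_t = -\langle h(Z_t), e_1\rangle\,dt + \sqrt{2/\beta}\,d\tilde B_t$ for a one-dimensional standard Brownian motion $\tilde B$. Next, the conditions in $\beta_{\min}$, in particular $\beta \geq A^2/R^2$ and $\beta \geq (8K_{3,R}A^3)^2$, guarantee that $B^y_{4r} \subset B(0,2R)$, so Lemma \ref{lemma-gradHess} applied at the base point $y$ (where $h(y)=0$) yields
\[
\langle h(Z_t), e_1\rangle = \bar\lambda_1 Y_t + \rho_t, \qquad |\rho_t| \leq \tfrac{K_{3,R}}{2}|Z_t - y|^2 \leq \frac{8 K_{3,R} A^2}{\beta},
\]
uniformly on $B^y_{4r}$; this linearization error is genuinely small compared with $l^* r$.

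The third step is a variation-of-parameters representation: on the event $\{\tau_{(B^y_{4r})^c} \geq t\}$ the linearization is valid throughout $[0,t]$, and multiplying by the integrating factor $e^{\bar\lambda_1 s}$ gives
\[
Y_t = e^{-\bar\lambda_1 t}Y_0 + M_t + E_t, \qquad M_t := \sqrt{2/\beta}\int_0^t e^{-\bar\lambda_1(t-s)}\,d\tilde B_s,
\]
where $M_t$ is a centered Gaussian with variance $\sigma_t^2 = (e^{-2\bar\lambda_1 t}-1)/(\beta|\bar\lambda_1|) \geq (e^{2l^* t}-1)/(\beta K_{2,R})$, growing exponentially at rate $\geq l^*$, and $|E_t| \leq C(e^{-\bar\lambda_1 t}-1)/\beta$. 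The event $\{\tau \geq t\}$ forces $|Y_t| \leq 4r$, hence it constrains $M_t$ to a random interval of length $8r + O(e^{-\bar\lambda_1 t}/\beta)$. Choosing a fixed $T_0 = \Theta(\log(A)/l^*)$ for which $\sigma_{T_0}$ dominates this length, the uniform Gaussian density bound $1/(\sqrt{2\pi}\sigma_{T_0})$ yields $\sup_{x \in B^y_{4r}}\mathbb{P}_x[\tau \geq T_0] \leq q$ for some $q \in (0,1)$ independent of $\beta$. Iterating via the strong Markov property gives $\mathbb{P}_x[\tau \geq n T_0] \leq q^n$, and hence $\mathbb{P}_x[\tau \geq t] \leq c_1 e^{-l^* t/2}$ after choosing $T_0$ so that $\log(1/q)/T_0 \geq l^*/2$ and absorbing constants.

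The main technical obstacle will be the coupling between $M_t$, the remainder $E_t$, and the stopping time $\tau$: although $M_t$ has a Gaussian marginal, the event $\{\tau \geq t\}$ is not independent of it, so the anti-concentration inequality has to be applied with care. This can be resolved by working with a modified process in which the drift is extended smoothly outside $B^y_{4r}$ (so that the representation of $Y_t$ holds unconditionally and $M_t$ is genuinely Gaussian), or by carefully applying the Markov property only at the deterministic times $kT_0$ where the conditioning remains tractable.
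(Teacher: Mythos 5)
Your high-level plan is sound and shares the paper's core geometric insight: project onto the unstable eigendirection $e_1 = v_y$ of $\nabla^2 u(y)$ (with eigenvalue $\leq -l^*$), linearize the drift via Lemma~\ref{lemma-gradHess} on the ball $B^y_{4r} \subset B(0,2R)$, and exploit the exponential instability. However, you and the paper diverge in how the one-dimensional problem is handled. You work with the signed coordinate $Y_t = \langle Z_t - y, e_1\rangle$, integrate by variation of parameters to isolate a Gaussian term $M_t$, and aim to combine anti-concentration at a fixed horizon $T_0$ with a strong Markov iteration. The paper instead applies It\^o to the \emph{squared} coordinate $G^2(Z_t) = \langle Z_t - y, v_y\rangle^2$, identifies the diffusion coefficient $2\sqrt{2/\beta}\,G$ via a L\'evy characterization, compares pathwise (through the comparison theorem in Corollary~\ref{cor-comparison}) with the CIR-type SDE $d\bar Y_t = (2l^*\bar Y_t + \beta^{-1})dt + 2\sqrt{2\beta^{-1}}\sqrt{\bar Y_t}\,dB_t$, and then reads off the exponential decay directly from the explicit CIR Laplace transform (Proposition~\ref{prop-CIR}). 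The paper's route buys a closed-form, one-shot exponential bound at rate $l^*/2$ with no iteration; your route buys a more elementary Gaussian estimate but at the cost of a two-step argument (constant $q$ at a fixed time, then geometric iteration), which then requires $T_0$ and $q$ to be tuned so that $\log(1/q)/T_0 \geq l^*/2$, a nontrivial but doable calibration you only gesture at.

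On the obstacle you flag: the coupling between $M_t$, $E_t$, and $\tau$ is less serious than you suggest, and the remedies you propose (extending the drift, Markov at deterministic times) are not the right way to dispose of it. Since on $\{\tau \geq t\}$ the path stays in $B^y_{4r}$, the remainder obeys the \emph{deterministic} bound $|E_t| \leq D_t := \frac{8K_{3,R}A^2}{\beta}\frac{e^{|\bar\lambda_1|t}-1}{|\bar\lambda_1|}$, so
\[
\{\tau \geq t\} \subseteq \{|Y_t|\leq 4r\}\cap\{|E_t|\leq D_t\} \subseteq \bigl\{\,|M_t + e^{|\bar\lambda_1| t}Y_0| \leq 4r + D_t\,\bigr\},
\]
and the last event concerns the genuinely Gaussian variable $M_t + e^{|\bar\lambda_1| t}Y_0$ (given $Z_0=x$, the shift is deterministic) on a deterministic interval; anti-concentration applies with no independence assumption, regardless of where that interval is centered. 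Your description of the constraint as a "random interval" whose coupling with $M_t$ must be controlled is therefore a red herring. What you do need to verify carefully (and do not) is the arithmetic: that with $\beta\geq\beta_{\min}$ (in particular $\sqrt\beta \geq 8K_{3,R}A^3$) the ratio $(4r+D_{T_0})/\sigma_{T_0}$ can be made small by choosing $T_0 = \Theta(\log A/l^*)$, and that the resulting $q$ and $T_0$ can simultaneously be calibrated so the iterated bound attains rate $l^*/2$ and not something weaker. With those details filled in, your proof would close; as written, it correctly identifies the structure but does not land the constant.
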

\begin{proof}
Let $x\in B^y_{4r}$ and $y\in S$ such that \[|x-y|^2\leq 16 \frac{A^2}{\beta}.\]
Let $g(Z_t)=|Z_t-y|$ and let $Z_t\in B.$
Let $v_y$ be a unit eigenvector of $\nabla^2u(y)$ with respect to its minimum eigenvalue.
We also define $G(x)=|\langle x-y,v_y\rangle |$ and
\[Q(x)=\frac{\langle x-y,v_y\rangle}{|\langle x-y,v_y\rangle |}v_y\quad \forall x\neq y.\]
There also holds that
\begin{equation}\label{eq-G}
    G(x)\leq |x-y||v_y|\leq |x-y|.
\end{equation}
\\
Applying Ito's formula for the process $Y_t=G^2(Z_t)$ one obtains 
\begin{equation}\label{eq-initial}
\begin{aligned}
dY_t=d G^2(Z_t)&=\left[\langle h(Z_t),\nabla G^2(Z_t)\rangle +\frac{1}{\beta}\left(\Delta(G^2(Z_t)) \right)\right]dt\\& +\sqrt{\frac{2}{\beta}}\langle  \nabla G^2(Z_t),d B_t\rangle.
\end{aligned}
\end{equation}
The diffusion coefficient can be written as
\[\sqrt{\frac{2}{\beta}}\langle  \nabla G^2(Z_t),d B_t\rangle=2\sqrt{\frac{2}{\beta}}G(Z_t) dM_t\]
where $M_t$ is 1-dimensional continuous local martingale given by \[M_t=\int_0^t\langle Q(Z_t)\mathds{1}_{Z_s\neq y},dB_s\rangle\]
Since $v_y$ is a unit vector it is easy to see that
\begin{equation}\label{eq-quadvar}
    [M]_t=\E \int_0^t \mathds{1}^2_{Z_s\neq y} ds=\int_0^t P(Z_s\neq y) ds=t
\end{equation}
by the Levy characterization, $M$ is a Wiener Process so
the SDE \eqref{eq-initial} can be rewritten as a 1-dimensional SDE:
\begin{equation}\label{eq-rewritten}
    dY_t=\left[\langle h(Z_t),\nabla G^2(Z_t)\rangle +\frac{1}{\beta}\left(\Delta(G^2(Z_t)) \right)\right]dt +2\sqrt{\frac{2}{\beta}}G(Z_t)dB_t.
\end{equation}
We define the stopped process $X_t:=Z_{t\wedge \tau_{(B^y_{4r})^c}}$.
Let $H(x)=\nabla^2 u(y) (x-y)$.
One notices that since $v_y$ is a unit eigenvector corresponding to the smallest eigenvalue of $\nabla^2 u(y),$
\begin{equation}
\begin{aligned}
\langle H(X_t),\nabla G^2(X_t)\rangle &=2 \langle \nabla^2 u(y) (X_t-y),v_y\rangle \langle X_t-y,v_y\rangle \\&=2\langle X_t-y,\nabla^2 u(y) v_y \rangle {\langle X_t-y,v_y\rangle }\\
&=2 \bar{\lambda}_{min}\left(\nabla^2 u(y)\right) |\langle X_t-y,v_y\rangle|^2
\\&\leq -2l^*  G^2(X_t)
\end{aligned}
\end{equation}

Recall that due to Lemma \ref{lemma-gradHess} and the fact that $X_t\in B(0,2R)$ and $y\in B(0,2R)$, $h(y)=0,$ there holds
\begin{equation}
  |\nabla G^2(X_t)| |H(X_t)-h(X_t)|\leq \frac{K_{3,R}}{2}|X_t-y|^2|\nabla G^2(X_t)|\leq K_{3,R} |X_t-y|^3\leq K_{3,R}8\frac{A^3}{\beta^\frac{3}{2}}.
\end{equation}
so since $\beta \geq (8 K_{3,R} A^3)^2$ \begin{equation}
    | \langle h(X_t)-H(X_t),\nabla  G^2(X_t)\rangle |\leq  \frac{1}{\beta}
\end{equation}
which leads to
\begin{equation}
\begin{aligned}
 -\langle h(X_t),\nabla G^2(X_t)\rangle &=- \langle H(X_t),\nabla G^2(X_t)\rangle +\langle h(X_t)-H(X_t),\nabla G^2(X_t)\rangle \\&\geq  2l^* G^2(X_t)-\frac{1}{\beta} .
\end{aligned}
\end{equation}
In addition, since $\frac{\partial G^2}{\partial x_i}(x)=2\langle x-y,v_y\rangle (v_y)_i $
Then \[\Delta G^2(x)=\sum_{i=1}^d \frac{\partial^2 G^2}{\partial x_i^2}(x)=2 \sum_{i=1}^d(v_y)^2_i=2\]

Set \[F(Y_t,t)=-\langle h(X_t),\nabla G^2(Z_t)+\frac{1}{\beta}\left( \Delta G^2(Z_t) \right)=-\langle h(Z_t),\nabla G^2(Z_t)\rangle+\frac{2}{\beta}.\]
In additon, set \[\bar{F}(x,t)=2l^*x+\frac{1}{\beta}\]
and $\sigma(x)=2\sqrt{2\beta^{-1}}\sqrt{x}.$
Let $\bar{Y}_t$ the solution to the 1- dimensional SDE \begin{equation}
    d\bar{Y}_t=\bar{F}(\bar{Y}_t)dt +\sigma(\bar{Y}_t)dB_t
    \end{equation} with initial condition $\bar{Y}_0=G(x)$.
We also can write \eqref{eq-initial} as
\begin{equation}
    dY_t=F(Y_t)dt+\sigma(Y_t)dB_t
\end{equation}
with initial condition $Y_0=G(x)$. The following properties hold:
\begin{itemize}
    \item  $\bar{F}$ is Lipschitz
    \item \[\int_0^T \sigma^2(Y_t)dt + \int_0^T \sigma^2(\bar{Y}_t)dt <\infty \quad \text{a.s}\]
    \item \[\int_0^T F(Y_t,t)dt + \int_0^T \bar{F}(\bar{Y}_t,t)dt <\infty \quad \text{a.s}\]
    \item \[|\sigma(Y_t)-\sigma(\bar{Y}_t)|\leq 4\sqrt{\beta^{-1}}|Y_t-\bar{Y}_t|^\frac{1}{2}\]
    \item \[F(Y_t,t)\geq \bar{F}(Y_t,t) \quad \forall t\leq \tau_{(B^y_{4r})^c}\]
    \item \[Y_0=\bar{Y}_0.\]
\end{itemize}
Using Corollary \ref{cor-comparison} one obtains
\begin{equation}\label{eq-compary}
    Y_{t\wedge \tau_{(B^y_{4r})^c}}\geq \bar{Y}_{t\wedge \tau_{(B^y_{4r})^c}} \quad \text{a.s}.
\end{equation}
Now we are ready to compute the probability of staying in $B^y_{4r}$.
\begin{equation}
    \begin{aligned}
     P\left[\tau_{(B^y_{4r})^c}\geq t\big|Z_0=x\right]&\leq P\left[\{sup_{s\in[0,t]} |Z_s-y|^2\leq  16 r^2\}\cap \{\tau_{(B^y_{4r})^c}\geq t\}\big|Z_0=x\right]
     \\&\leq P\left[\{Y_t\leq 16 r^2\}\cap \{\tau_{(B^y_{4r})^c}\geq t\}\big|Z_0=x\right]  \quad (\text{using \eqref{eq-G}} )
     \\&\leq P\left[\{Y_{t\wedge\tau_{(B^y_{4r})^c}}\leq 16 r^2\}\cap \{\tau_{(B^y_{4r})^c}\geq t\} \big| Y_0=G^2(x)\right]
     \\& \leq P\left[\bar{Y}_t\leq 16 r^2\big |\bar{Y}_0=G^2(x)\right] \quad ( \text{using \eqref{eq-compary}})
     \\&\leq  \int _0 ^{16r^2} f_{{\bar{Y}},G^2(x)}(t,y) dy.
     \\&\leq e^{\lambda_1 16 r^2 }\int _0 ^{16r^2} e^{-\lambda_1 y} f_{{\bar{Y}},G^2(x)}(t,y) dy
     \\&\leq C e^{-\frac{l^*}{2}t} \quad (\text{Proposition } \ref{prop-CIR})
    \end{aligned}
\end{equation}
for some $C>0$ independent of $x.$
\end{proof}
\begin{corollary}\label{escape lyapunov}
Let $\kappa:=\frac{l^*}{8}.$ Let $y\in S.$
    There exists a function $\bar{W}_y \in \mathcal{C}^{2}({B^y_{4r}})$ such that $\bar{W}_y\geq 1 \quad \forall x \in B_{4r}^y$ and
    \[L\Bar{W}_y=-\kappa\Bar{W}_y \quad \text{in} \quad B^y_{4r}.\]
\end{corollary}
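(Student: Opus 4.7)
The plan is to apply Proposition \ref{prop-solution} (Freidlin--Wentzell) with the domain $E = B^y_{4r}$ and constant $k = \kappa = l^*/8$, and define
\[
\bar{W}_y(x) := \mathbb{E}_x\bigl[\exp(\kappa\,\tau_{(B^y_{4r})^c})\bigr].
\]
The conclusion that $\bar{W}_y \geq 1$ is immediate since $\tau_{(B^y_{4r})^c} \geq 0$, and the PDE $L\bar{W}_y = -\kappa \bar{W}_y$ inside the ball together with $\mathcal{C}^2$ regularity follow directly from Proposition \ref{prop-solution}, provided I can verify the single hypothesis
\[
\sup_{x \in B^y_{4r}} \mathbb{E}_x\bigl[\exp(2\kappa\,\tau_{(B^y_{4r})^c})\bigr] < \infty.
\]

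To verify this hypothesis, I would use the exponential tail estimate from Lemma \ref{lemma-esc}, which gives a constant $c_1$ such that $\mathbb{P}[\tau_{(B^y_{4r})^c} \geq t \mid Z_0 = x] \leq c_1 e^{-l^* t/2}$ uniformly in $x \in B^y_{4r}$. Rewriting the exponential moment via the layer-cake formula,
\[
\mathbb{E}_x\bigl[\exp(2\kappa\,\tau_{(B^y_{4r})^c})\bigr] = 1 + 2\kappa \int_0^\infty e^{2\kappa t}\,\mathbb{P}_x[\tau_{(B^y_{4r})^c} \geq t]\,dt \leq 1 + 2\kappa c_1 \int_0^\infty e^{(2\kappa - l^*/2) t}\,dt.
\]
Since $\kappa = l^*/8$, one has $2\kappa - l^*/2 = -l^*/4 < 0$, so the integral is finite and the bound is uniform in $x$, yielding
\[
\sup_{x \in B^y_{4r}} \mathbb{E}_x\bigl[\exp(2\kappa\,\tau_{(B^y_{4r})^c})\bigr] \leq 1 + \frac{8\kappa c_1}{l^*} < \infty.
\]

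The smoothness of $\partial B^y_{4r}$ is automatic (it is a Euclidean ball), and the smoothness of the drift $h$ required for Proposition \ref{prop-solution} is ensured by Assumption \aref{ass-derivbound}. There is essentially no obstacle here; the only subtlety is the choice of the constant $\kappa = l^*/8$, which is dictated precisely by the need for $2\kappa$ to remain strictly below the exponential decay rate $l^*/2$ supplied by Lemma \ref{lemma-esc}, leaving a factor of $2$ of slack that will be useful later in the proof strategy.
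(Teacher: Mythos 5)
Your proof is correct and matches the paper's argument essentially line for line: both invoke the exit-time tail bound of Lemma \ref{lemma-esc}, deduce the uniform finiteness of $\sup_{x}\mathbb{E}_x[\exp(2\kappa\,\tau_{(B^y_{4r})^c})]$, and then apply Proposition \ref{prop-solution} with $k=\kappa$ to define $\bar{W}_y$. The only cosmetic difference is that you carry out the exponential-moment computation directly via the layer-cake formula, whereas the paper outsources this step to Theorem \ref{subexponential} (Wainwright's sub-exponential equivalence with $c_2=l^*/2$, giving $\mathbb{E}\,e^{(l^*/4)\tau}<\infty$, i.e.\ $\mathbb{E}\,e^{2\kappa\tau}<\infty$); the content is the same.
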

\begin{proof}
Using the equivalent definition of subexponential random variables in Theorem \ref{subexponential} and Lemma \ref{lemma-esc} one obtains  \[\sup_{x\in B^y_{4r}}\E_x \exp{\left(2\kappa \tau_{(B^y_{4r})^c}\right)}<\infty.\] Using Proposition \ref{prop-solution} the function
    $\bar{W}_y(x)=\E_x\left(exp{\left(\kappa\tau_{(B^y_{4r})^c}\right)}\right)$
    solves the Dirichlet problem
   \[ \begin{array}{cc}
        LW &=-\kappa W \quad x \in B^y_{4r} \\
         W&=1 \quad x \in \partial B^y_{4r}
    \end{array}\]
    which yields the result.
\end{proof}
\subsubsection{Proving a Poincare inequality on the whole space}
We have now proved a functional ineuquality on $AC\cup U$ and a Lyapunov result in Corollary \ref{escape lyapunov}. Before we prove the Poincaré inequality on the whole space we shall use the following construction.
\begin{lemma}
 For all $r_2>r_1>0$ and $\epsilon>0$, there exists a smooth non-decreasing function $\psi: \mathbb{R} \rightarrow[0,1]$ such that
$$
\psi(x)= \begin{cases}0, & x \leq r_1, \\ \text { Increasing }, & x \in\left(r_1, r_2\right), \\ 1, & x \geq r_2,\end{cases}
$$
\[\text { and that }\left\|\psi^{\prime}\right\|_{\infty} \leq \frac{1}{r_2-r_1}+\epsilon.\]
In addition, by setting $\chi(x)=\psi(d(x,S)),$ $r_1=2r,$ $r_2=3r$, where $r=\frac{A}{\sqrt{\beta}}$,
one obtains $0\leq\chi\leq 1$ is a smooth function, such that
$\nabla \chi=0$ on $E_{r}:=\{x\in \mathbb{R}^d:d(x,S)< r_{A,\beta}\}$  and \[||\Gamma(\chi,\chi)||_\infty \leq \frac{1}{\beta}(||\psi'||_\infty)^2\leq \frac{2}{\beta}\left(\frac{1}{r^2}+\epsilon^2\right)\leq \frac{4}{A^2}\leq \frac{1}{8} \frac{1}{\frac{1}{\kappa} +\frac{1}{\theta}}.\]
\end{lemma}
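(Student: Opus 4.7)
The statement has two independent pieces: (i) a one–dimensional smoothing construction that delivers $\psi$ with slope arbitrarily close to the optimal Lipschitz constant $1/(r_2-r_1)$, and (ii) a purely geometric argument that transfers the bound to $\chi(x)=\psi(d(x,S))$. I treat them in that order.

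For (i), my plan is the classical mollification of a piecewise linear ramp. Fix $\delta\in(0,(r_2-r_1)/2)$ small (to be chosen at the end), let $\eta_\delta\in\mathcal{C}^\infty_c(\mathbb{R})$ be a standard non-negative mollifier supported in $[-\delta,\delta]$ with $\int\eta_\delta=1$, and define the shifted piecewise linear ramp
\[
\psi_0(x)=\begin{cases}0,& x\le r_1+\delta,\\[2pt] \dfrac{x-(r_1+\delta)}{r_2-r_1-2\delta},& r_1+\delta<x<r_2-\delta,\\[4pt] 1,& x\ge r_2-\delta.\end{cases}
\]
Set $\psi:=\psi_0\ast\eta_\delta$. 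Then $\psi$ is smooth, non-decreasing (convolution of a non-decreasing function with a non-negative kernel), takes values in $[0,1]$, equals $0$ on $(-\infty,r_1]$ and $1$ on $[r_2,\infty)$ by the choice of shifts, and since convolution with $\eta_\delta$ does not increase the $L^\infty$ norm of the derivative, $\|\psi'\|_\infty\le\|\psi_0'\|_\infty=1/(r_2-r_1-2\delta)$. Choosing $\delta$ small enough that $2\delta/((r_2-r_1)(r_2-r_1-2\delta))\le\epsilon$ yields $\|\psi'\|_\infty\le1/(r_2-r_1)+\epsilon$, which is the first claim.

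For (ii), specialise to $r_1=2r$, $r_2=3r$ with $r=A/\sqrt{\beta}$ and set $\chi(x)=\psi(d(x,S))$. The main technical point is smoothness of $\chi$. Since $S$ is finite and the standing restriction $\beta\ge16A^2/\epsilon_0^2$ forces $3r\le 3\epsilon_0/4<\epsilon_0/2$, for every $x$ with $d(x,S)\in(2r,3r)$ the nearest saddle $s(x)\in S$ is unique and $|x-s(x)|\in(2r,3r)$; thus $d(x,S)=|x-s(x)|$ coincides locally with a single smooth Euclidean distance, which is $\mathcal{C}^\infty$ away from $s(x)$. Outside the annular shell $\{2r<d(x,S)<3r\}$ the function $\psi'$ vanishes, so $\chi$ is smooth on the whole of $\mathbb{R}^d$. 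In particular, on $E_r=\{d(x,S)<r\}\subset\{d(x,S)\le 2r\}$ we have $\psi'(d(x,S))=0$, hence $\nabla\chi=0$ on $E_r$.

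Finally, because $|\nabla d(\cdot,S)|\le 1$ wherever it is differentiable (it equals the unit vector $(x-s(x))/|x-s(x)|$ in the shell), the chain rule gives
\[
|\nabla\chi(x)|^2=|\psi'(d(x,S))|^2|\nabla d(x,S)|^2\le\|\psi'\|_\infty^2\le\Bigl(\tfrac{1}{r}+\epsilon\Bigr)^2\le 2\Bigl(\tfrac{1}{r^2}+\epsilon^2\Bigr),
\]
using $(a+b)^2\le 2(a^2+b^2)$. Dividing by $\beta$ and using $r^2=A^2/\beta$ yields
\[
\|\Gamma(\chi,\chi)\|_\infty=\tfrac{1}{\beta}\|\nabla\chi\|_\infty^2\le\tfrac{2}{\beta}\bigl(\tfrac{1}{r^2}+\epsilon^2\bigr)\le\tfrac{4}{A^2},
\]
once $\epsilon$ is chosen so that $\epsilon^2\le 1/r^2=\beta/A^2$. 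The last inequality of the statement then follows from the standing lower bound $A^2\ge 32(1/\kappa+1/\theta)$ built into the admissible range of $A$. The only genuinely delicate point is the smoothness of $\chi$ at points of the shell that might be equidistant from two saddles; this is precisely what the separation hypothesis $\beta\ge 16A^2/\epsilon_0^2$ rules out, and is the step I expect to require the most care.
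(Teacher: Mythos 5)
Your proof is self-contained and correct in all essentials, whereas the paper simply delegates the construction of $\psi$ to Lemma~B.13 of \cite{li2020riemannian}; in that sense you do more work, and the mollified-ramp construction you give is the standard one (shift the ramp inward by $\delta$ so that the convolution with a mollifier supported in $[-\delta,\delta]$ does not push mass past $r_1$ or $r_2$, pick $\delta$ small to keep the slope within $\epsilon$ of $1/(r_2-r_1)$). Part (ii) — transferring the bound to $\chi=\psi(d(\cdot,S))$ via $|\nabla d(\cdot,S)|\le 1$ and $(a+b)^2\le 2(a^2+b^2)$ — is exactly what one must do and matches the inequality chain asserted in the lemma.

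One concrete slip: you write ``the standing restriction $\beta\ge 16A^2/\epsilon_0^2$ forces $3r\le 3\epsilon_0/4<\epsilon_0/2$,'' but $3\epsilon_0/4>\epsilon_0/2$, so that deduction is false as stated. The uniqueness of the nearest saddle for points in the shell $\{2r<d(x,S)<3r\}$ — which you correctly identify as the key point for smoothness of $\chi$ — actually requires $3r<\epsilon_0/2$, i.e.\ $\beta>36A^2/\epsilon_0^2$, which is slightly stronger than the paper's stated $16A^2/\epsilon_0^2$. This is a benign constant-factor issue (the paper's own claim that the balls $B^y_{4r}$ are disjoint has a parallel numerical gap, since $8r\le 2\epsilon_0\not<\epsilon_0$ under $\beta\ge16A^2/\epsilon_0^2$), but you should state the correct threshold rather than assert a false inequality. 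A second, smaller point: you quote ``$A^2\ge 32(1/\kappa+1/\theta)$ built into the admissible range of $A$'' for the final inequality $4/A^2\le\tfrac18(\tfrac1\kappa+\tfrac1\theta)^{-1}$; this is the intent of the paper's choice $A\ge\sqrt{32}\max\{\,\cdot\,\}$ with $\kappa=l^*/8$, but you have not actually checked that the stated $\max$ dominates $\sqrt{1/\kappa+1/\theta}$, so it is worth flagging as an assumption on $A$ rather than something you derived.

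Also worth noting a refinement of your smoothness discussion: at points $x_0$ with $d(x_0,S)\in\{2r,3r\}$ exactly, $d(\cdot,S)$ need not be differentiable, but since $\psi$ has all derivatives vanishing at $2r$ and $3r$ the composition $\psi\circ d$ is still $\mathcal{C}^\infty$ there; the uniqueness of the nearest saddle is genuinely needed only on the open shell $\{2r<d<3r\}$, where $\psi'\neq0$. Your argument is framed as if uniqueness were needed on the closed shell, which is slightly stronger than necessary.
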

    \begin{proof}
    For the construction of the function $\psi$,
        see proof of Lemma B.13 in \cite{li2020riemannian}.
    \end{proof}

\begin{proof}[Proof of Theorem \ref{Poincare constant general} ]
 Let $0\leq\chi\leq 1$ be a smooth function such that $\chi=0$ on $E_{2r}$ and $\chi=1$ on $E_{3r}^c.$
 We define a Lyapunov function $W_2$ as  \[\begin{array}{ccc}
      W_2(x)&=&\bar{W}_y(x), \quad  x\in B^y_{4r} \\
      W_2(x)&=&1, \quad \text{elsewhere}.
 \end{array}\]
 Note that $E_{4r}=\{x\in \mathbb{R}^d:d(x,S)< 4 r_{A,\beta}\}=\cup_{y\in S} B^y_{4r}$ and by the restriction on $\beta$, $B^y_{4r}$ are disjoint. Then, for $f\in H^1(\mathbb{R}^d),$
   \begin{equation}\label{eq-mainstartpoincare}
\begin{aligned}
\int_{\mathbb{R}^d} f^2 d\pi_{\beta} &=\int_{\mathbb{R}^d}(f(1-\chi)+f \chi)^2 d\pi_{\beta} \\
& \leq 2 \int_{\mathbb{R}^d} f^2(1-\chi)^2 d\pi_{\beta}+2 \int_{\mathbb{R}^d} f^2 \chi^2 d\pi_{\beta} \\
& \leq \frac{2}{\kappa} \sum_{y\in S} \int_{B^y_{4r}} \frac{-L \bar{W}_y}{\bar{W}_y} f^2(1-\chi)^2 d\pi_{\beta}+2 \int_{AC\cup U} f^2 \chi^2 d\pi_{\beta}  \\
& \leq \frac{2}{\kappa} \sum_{y\in S} \int_{B^y_{4r}} \frac{-L W_2}{W_2} f^2(1-\chi)^2 d\pi_{\beta}+2 \int_{AC\cup U} f^2 \chi^2 d\pi_{\beta}  \\
& = \frac{2}{\kappa} \int_{E_{4r}} \frac{-L W_2}{W_2} f^2(1-\chi)^2 d\pi_{\beta}+2 \int_{AC\cup U} f^2 \chi^2 d\pi_{\beta}.
\end{aligned}
\end{equation}
For the first term since $f(1-\chi)$ is zero on the boundary of $E_{4r}$ (which is piecewise smooth) using arguments as in \eqref{eq-argum}  there holds
\[\frac{2}{\kappa} \int_{E_{4r}} \frac{-L W_2}{W_2} f^2(1-\chi)^2 d\pi_{\beta}\leq \frac{2}{\kappa} \int_{\mathbb{R}^d} \Gamma (f(1-\chi),f(1-\chi))d\pi_{\beta}.\]
Using \[\Gamma(fg,fg)\leq 2\left(f^2 \Gamma(g,g) +g^2 \Gamma(f,f)\right),\]
yields
\begin{equation}\label{eq-firstterm}
    \frac{2}{\kappa} \int_{E_{4r}} \frac{-L W_2}{W_2} f^2(1-\chi)^2 d\pi_{\beta}\leq \frac{4}{\kappa}\frac{1}{\beta} \int_{\mathbb{R}^d} |\nabla f|^2 d\pi_{\beta} + \frac{4}{\kappa} ||\Gamma(1-\chi,1-\chi)||_\infty \int_{\mathbb{R}^d}f^2 d\pi_{\beta}
\end{equation}
In addition, since $(AC\cup U)^c=E_r$,  $g=f\chi$ vanishes on the boundary of $AC\cup U$ thus, using Lemma \ref{poincaretype} one deduces
\begin{equation}\label{eq-secondterm2}
    \begin{aligned}
    2\int_\acu (f\chi)^2 d\pi_{\beta}&\leq \frac{2}{\theta}\frac{1}{\beta} \int_{\mathbb{R}^d} |\nabla f\chi|^2 d\pi_{\beta} +\frac{2b_0}{\theta}\int_U (f\chi)^2 d\pi_{\beta}
    \\&\leq \frac{4}{\theta}\frac{1}{\beta} \int_{\mathbb{R}^d} |\nabla f|^2 d\pi_{\beta} +\frac{4}{\theta}||\Gamma(\chi,\chi)||_\infty \int_{\mathbb{R}^d}f^2d\pi_{\beta} +\frac{2b_0}{\theta}\int_U f^2 d\pi_{\beta}
\end{aligned} 
\end{equation}
Bringing \eqref{eq-mainstartpoincare},\eqref{eq-firstterm}, \eqref{eq-secondterm2} together one obtains
\[\begin{aligned}
    \int_{\mathbb{R}^d} f^2 d\pi_{\beta} &\leq \frac{4}{\beta}\left(\frac{1}{\kappa}+\frac{1}{\theta}\right)\int_{\mathbb{R}^d} |\nabla f |^2 d\pi_{\beta} +4 ||\Gamma(\chi,\chi)||_\infty\left(\frac{1}{\kappa}+\frac{1}{\theta}\right)\int_{\mathbb{R}^d} f^2 d\pi_{\beta} \\&+ \frac{2b_0}{\theta} \int_{U} f^2 d\pi_{\beta}
\end{aligned}\]
Using the fact that $4 ||\Gamma(\chi,\chi)||_\infty\left(\frac{1}{\kappa}+\frac{1}{\theta}\right)\leq \frac{1}{2}$,
one obtains 
\[\int_{\mathbb{R}^d} f^2 d\pi_{\beta} \leq \frac{8}{\beta}\left(\frac{1}{\kappa}+\frac{1}{\theta}\right)\int_{\mathbb{R}^d} |\nabla f |^2 d\pi_{\beta} + \frac{4 b_0}{\theta}\int_{U} f^2 d\pi_{\beta}\]
In order to conclude the proof we will make use of the fact that a Poincare inequality on $U$ has already been proved.
Let $\Tilde{f} \in H^1(\mathbb{R}^d)$. Setting $f=\Tilde{f}-c_{\Tilde{f}}$ where $c_{\Tilde{f}}=\frac{1}{\pi(U)} \int_U \Tilde{f}d\pi_{\beta}$.
Using the fact that $Var(\Tilde{f})=Var_\pi({f}+c_{\Tilde{f}})\leq \int_{\mathbb{R}^d} f^2 d\pi_\beta,  $ one deduces
\begin{equation}\label{eq-Poincarefinal}
\begin{aligned}
Var_\pi (\Tilde{f})\leq \int_{\mathbb{R}^d} f^2 d\pi_{\beta} \leq  \frac{8}{\beta}\left(\frac{1}{\kappa}+\frac{1}{\theta}\right)\int_{\mathbb{R}^d} |\nabla \Tilde{f} |^2 d\pi_{\beta} + \frac{4 b_0}{\theta}\int_{U} f^2 d\pi_{\beta}
\end{aligned}
\end{equation}
Using the fact that $\pi$ satisfies a PI on $U$ and  $\int_U f d\pi_{\beta}=0,$ the last term is bounded i.e
\[\frac{4 b_0}{\theta}\int_{U} f^2 d\pi_{\beta}\leq \frac{4 b_0}{\theta}\frac{1}{\beta}\frac{1}{\kappa_U} \int_U |\nabla {f} |^2  d\pi_{\beta} \leq  \frac{4 b_0}{\theta}\frac{1}{\beta}\frac{1}{\kappa_U} \int_{\mathbb{R}^d} |\nabla \Tilde{f} |^2  d\pi_{\beta}. \]
Inserting this into \eqref{eq-Poincarefinal} one deduces \[Var_\pi (\Tilde{f})\leq \frac{1}{\beta}\left( \frac{4b_0}{\theta}\frac{1}{\kappa_U} + \frac{8}{\kappa}+\frac{8}{\theta}\right) \int_{\mathbb{R}^d} |\nabla \Tilde{f}|^2 d\pi_{\beta}\]
which completes our proof.
\end{proof}
\section{Proof of Corollary \ref{theo-LSI constant}}
\begin{remark}\label{remarkpoinc}
 Let Assumptions \aref{ass-derivbound}-\aref{ass-initial cond} and \bref{ass11}-\bref{ass-Morse}  hold.
Then, for $\beta \geq \mathcal{O}(d^6)$, 
 $\pi_\beta$ satisfies a Poincaré inequality with constant $C_p$ independent of the dimension.
\end{remark}
\begin{proof}
Note that \aref{ass-2dissip} implies \eqref{eq-limitcond}
    with constant $R=\max\{4\sqrt{\frac{b}{a}}, 1\}$ , $c_H=\frac{a}{2}.$
   Using the previous restriction for $\beta$ one obtains the result.
\end{proof}
\begin{lemma}\label{LSI-prelim}
Let $c_0=\frac{a}{4}$ and $\gamma=\frac{a(d/\beta+ b)}{2}$ Then, under \aref{ass-2dissip} there exists a Lyapunov function $\hat{W} \in C^2$ such that \[L\hat{W}\leq \beta \left(-c_0|x|^2 +\gamma\right) \hat{W} \]
\end{lemma}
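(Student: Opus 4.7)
}

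The natural candidate is a Gaussian-type exponential Lyapunov function
\[
\hat{W}(x) = \exp\!\left( \beta \alpha \tfrac{|x|^2}{2} \right)
\]
for a suitably chosen constant $\alpha>0$ (roughly $\alpha\sim a/2$). Such functions are standard in the Bakry--Emery/Meyn--Tweedie style analysis of Langevin SDEs under dissipativity, since they turn the drift term in the generator into a quadratic form which can then be directly compared with the dissipativity bound.

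The plan is to compute $L\hat{W}$ directly from the definition $Lf=\frac{1}{\beta}\Delta f - \langle\nabla f, h\rangle$. A short computation yields
\[
\nabla \hat{W}=\beta\alpha x\,\hat{W},\qquad \Delta \hat{W}=\bigl(\beta\alpha d+\beta^{2}\alpha^{2}|x|^{2}\bigr)\hat{W},
\]
so that
\[
\frac{L\hat{W}}{\hat{W}} \;=\; \alpha d \;+\; \beta\alpha^{2}|x|^{2}\;-\;\beta\alpha\,\langle x,h(x)\rangle .
\]
At this point I would insert the $2$-dissipativity assumption \aref{ass-2dissip}, $\langle x,h(x)\rangle\ge a|x|^{2}-b$, which gives
\[
\frac{L\hat{W}}{\hat{W}} \;\le\; \beta\Bigl(\alpha(\alpha-a)\,|x|^{2}\;+\;\alpha b\;+\;\tfrac{\alpha d}{\beta}\Bigr).
\]
The coefficient of $|x|^{2}$ is minimised at $\alpha=a/2$, producing a negative quadratic term of order $-\beta|x|^{2}$, while the constant part is $\beta(\alpha b+\alpha d/\beta)$, i.e.\ exactly of the form $\beta\gamma$ with $\gamma\propto a(d/\beta+b)$.

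The only genuinely delicate step is the bookkeeping of constants: one must choose $\alpha$ so that simultaneously the quadratic coefficient lies below $-c_0=-a/4$ and the constant is dominated by the prescribed $\gamma=a(d/\beta+b)/2$. Taking $\alpha=a/2$ directly handles the constant term since $(a/2)(b+d/\beta)=\gamma$, and gives a quadratic coefficient of $-a^{2}/4$, which is then bounded above by $-a/4$ in the regime where the statement is invoked (or, more generally, one rescales $\alpha$ to fit the exact constant $c_0=a/4$). No further assumptions beyond \aref{ass-2dissip} are needed, and the regularity $\hat{W}\in\mathcal{C}^{2}$ is immediate since $\hat{W}$ is a composition of $\exp$ with a quadratic. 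The result then follows.
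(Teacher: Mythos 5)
Your proof takes exactly the same route as the paper: the Lyapunov function you settle on, $\hat{W}(x)=\exp(\beta\alpha|x|^2/2)$ with $\alpha=a/2$, is precisely the paper's $\hat{W}=e^{\beta\frac{a}{4}|x|^2}$, and the computation of $L\hat{W}$ followed by substitution of the $2$-dissipativity bound is identical to the paper's one-line argument. You are also right to flag the constant bookkeeping: the quadratic coefficient that actually comes out is $-a^2/4$ rather than the stated $-c_0=-a/4$ (the paper glosses over this with "the result follows easily"), so as written the inequality requires $a\geq 1$ or a rescaling of the exponent; since the downstream application (Theorem 3.15 of \citet{menz2014poincare}) only needs \emph{some} $c_0,\gamma>0$, this is cosmetic, but your observation is correct and the paper's statement is slightly sloppy here.
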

\begin{proof}
Setting $\hat{W}=e^{\beta \frac{a}{4} |x|^2}$
    \[L\hat{W}=\left(\frac{ad}{2}\beta +\frac{a^2}{4} \beta|x|^2-\frac{a \beta}{2} \langle h(x),x\rangle \right ) \hat{W}\]
    Using the fact that \[\langle h(x),x\rangle \geq a|x|^2-b\] the result follows easily.
\end{proof}
\begin{theorem}[\cite{menz2014poincare}, Theorem 3.15]
    Suppose that there exists a $\mathcal{C}^2$ function $W\geq 1$ such that:
    \begin{enumerate}
        \item There exist $c_0,\gamma>0$ such that\[\frac{LW}{W}(x)\leq \beta \left(-c_0 |x|^2 +\gamma\right)\quad\forall x \in \mathbb{R}^d.\]
        \item $\pi_\beta$ satisfies a Poincare inequality with constant ${\rho}.$ 
        \item There exists $K_0\in \mathbb{R}^d$ such that \[\nabla^2 u(x)\geq -K_0 I_d \quad \forall x\in \mathbb{R}^d.\]
    \end{enumerate}
    Then, $\pi_\beta$ satisfies an LSI with constant
    \[\frac{1}{C_{LSI}}\leq 2\beta \sqrt{\frac{1}{c_0}\left(\frac{1}{2}+\frac{ \gamma + c_0\E_{\pi_\beta} [|x|^2]}{\beta \rho}\right)}+\frac{\beta^2 K_0}{c_0} + \frac{\beta}{\rho} K_0(\gamma +c_0\E_{\pi_\beta}[|x|^2])+\frac{2}{\rho}. \]
\end{theorem}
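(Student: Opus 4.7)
The strategy combines three ingredients — the Lyapunov function $W$, the Otto--Villani HWI inequality (available because $\nabla^2(\beta u)\ge -\beta K_0 I_d$), and the Poincar\'{e} inequality — assembled in the Cattiaux--Guillin--Wang spirit.

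First, use $W$ to control the quadratic moment of an arbitrary probability density against $\pi_\beta$. For $\nu\ll\pi_\beta$ with density $f=d\nu/d\pi_\beta$, set $g=\sqrt f$; integrating by parts the expression $\int(-LW/W)\,g^2\,d\pi_\beta=\beta^{-1}\int\langle\nabla W,\nabla(g^2/W)\rangle\,d\pi_\beta$ and applying Cauchy--Schwarz to $2g\,\nabla\log W\cdot\nabla g-g^2|\nabla\log W|^2$ exactly as in (\ref{eq-argum}) delivers $\int(-LW/W)\,g^2\,d\pi_\beta\le\tfrac14 I_{\pi_\beta}(\nu)$. The hypothesis $-LW/W\ge\beta(c_0|x|^2-\gamma)$ then yields $\int|x|^2\,d\nu\le\frac{1}{4\beta c_0}I_{\pi_\beta}(\nu)+\gamma/c_0$, and combining with $W_2(\nu,\pi_\beta)^2\le 2\int|x|^2\,d\nu+2\E_{\pi_\beta}[|x|^2]$ produces the transport bound
\[W_2(\nu,\pi_\beta)^2\;\le\;\frac{1}{2\beta c_0}\,I_{\pi_\beta}(\nu)+\frac{2M}{c_0},\qquad M:=\gamma+c_0\,\E_{\pi_\beta}[|x|^2].\]

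Next, substitute this into the negative-curvature HWI inequality
\[H_{\pi_\beta}(\nu)\;\le\;W_2(\nu,\pi_\beta)\,\sqrt{\beta\,I_{\pi_\beta}(\nu)}+\tfrac{\beta K_0}{2}\,W_2(\nu,\pi_\beta)^2.\]
Splitting the square root via $\sqrt{a+b}\le\sqrt a+\sqrt b$ and absorbing the resulting $\sqrt{I_{\pi_\beta}}$ cross term into $I_{\pi_\beta}$ via Young's inequality $uv\le\eta u^2+v^2/(4\eta)$ (with a free $\eta>0$ to be optimized later) produces a \emph{defective} log-Sobolev inequality $H_{\pi_\beta}(\nu)\le A(\eta)\,I_{\pi_\beta}(\nu)+B(\eta)$, where $A(\eta)$ collects the $I_{\pi_\beta}$-coefficients arising from both HWI pieces (in particular $1/\sqrt{2c_0}$ from the $W_2\sqrt I$ diagonal and $K_0/(4c_0)$ from the $W_2^2$ diagonal), while $B(\eta)$ is the residual constant, proportional to $\beta M(\eta^{-1}+K_0)/c_0$.

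Finally, remove $B(\eta)$ via the Rothaus/semigroup mechanism: Poincar\'{e} gives exponential decay of $\|dP_t\nu/d\pi_\beta-1\|_{L^2(\pi_\beta)}^2$ along the semigroup $P_t$ generated by $L$ at rate $2\rho/\beta$; combining $\frac{d}{dt}H_{\pi_\beta}(\nu P_t)=-I_{\pi_\beta}(\nu P_t)$ with the defective inequality and applying Rothaus' centering identity (comparing $H_{\pi_\beta}$ on $g^2$-densities with the centered version, with excess $2\operatorname{var}_{\pi_\beta}(g)$) turns the defective LSI into a proper one, with the additive $2/\rho$ contribution arising from $2\operatorname{var}_{\pi_\beta}(g)$ bounded by Poincar\'{e}. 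Optimizing over $\eta$ then produces the square-root piece $2\beta\sqrt{c_0^{-1}(1/2+M/(\beta\rho))}$ of the statement, while the $\beta^2 K_0/c_0$ and $\beta K_0 M/\rho$ contributions come from the $K_0$-terms in $A(\eta)$ and from the $\beta K_0 M$ part of $B(\eta)$ after tightening.

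\emph{Main obstacle.} The delicate point is the last step: because $B(\eta)$ does not vanish under the linearization $d\nu/d\pi_\beta\to 1$, a naive differentiation at the constant density cannot remove it, and one must genuinely run the heat semigroup (or invoke Wang's approach) to absorb the additive constant using Poincar\'{e}-driven exponential decay. Matching the resulting bound to the four explicit terms of the statement — the square-root piece, $\beta^2 K_0/c_0$, $\beta K_0 M/\rho$, and $2/\rho$ — is the bulk of the bookkeeping, and the correct choice of the Young parameter $\eta$ is what produces the precise square-root coefficient $2\beta$ together with the inner expression $c_0^{-1}(1/2+M/(\beta\rho))$.
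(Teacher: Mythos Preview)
The paper does not give its own proof of this statement: the theorem is quoted verbatim from \cite{menz2014poincare}, Theorem 3.15, and is used as a black box in the subsequent proof of Corollary \ref{theo-LSI constant}. So there is no ``paper's proof'' to compare against beyond the original reference.

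That said, your sketch is essentially the Cattiaux--Guillin--Wu argument on which the Menz--Schlichting result is based, and the outline is correct: (i) integrate $-LW/W$ against $g^2\,d\pi_\beta$ with $g=\sqrt{f}$ and use the Lyapunov condition to control $\int |x|^2\,d\nu$ by the Fisher information; (ii) feed the resulting $W_2$ bound into the HWI inequality (valid under the lower Hessian bound $\nabla^2(\beta u)\ge -\beta K_0 I_d$) to obtain a defective LSI; (iii) tighten via Rothaus' lemma combined with the Poincar\'e inequality. One small comment on step (iii): you do not actually need to run the semigroup. Rothaus' lemma
\[
\mathrm{Ent}_{\pi_\beta}(g^2)\;\le\;\mathrm{Ent}_{\pi_\beta}\bigl((g-\textstyle\int g\,d\pi_\beta)^2\bigr)+2\operatorname{var}_{\pi_\beta}(g)
\]
applied together with the defective LSI on the centered function and the Poincar\'e bound on $\operatorname{var}_{\pi_\beta}(g)$ removes the additive constant $B(\eta)$ directly, producing the $2/\rho$ term; the semigroup route works too but is more roundabout. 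The bookkeeping for the four explicit constants is tedious but as you describe.
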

\begin{proof}[\textbf{Proof of Corollary \ref{theo-LSI constant}}]
    Using the Lyapunov function $\hat{W}$ in Lemma \ref{LSI-prelim} one finds that condition 1 in the previous theorem is satisfied. Since a Poincare inequality has already been proved (see Remark \ref{remarkpoinc}) and condition 3 holds by our assumptions, the previous theorem can be applied.
\end{proof}

\section{Motivating examples}
\subsection{Example 1: Example satisfying a Poincare constant under our novel assumptions}
Let $g: \mathbb{R}^2 \rightarrow \mathbb{R}$ such that
$g(x, y)=\frac{1}{3} x^3+y^2+2 x y-6 x-3 y+4.$\\
One calculates $g_x=x^2+2y-6$ and $g_y=2y+2x-3.$
The set of critical points are $C=\{(-1,\frac{5}{2}), (3,-\frac{3}{2})\}.$
The Hessian of the function is given by $g_{xx}=2x$, $g_{xy}=2$, $g_{yy}=2$. \\
It is easy to see that that $(-1,\frac{5}{2})$
is a non-degenerate saddle point and $(3,-\frac{3}{2})$ is a non-degenerate local minimum. It is also easy to see that \[\lim_{|x|\rightarrow \infty} |\nabla g(x) |=\infty,\] the gradient local Lipschitz conditions are satisfied which is also true for the polynomial growth of the higher derivatives. Therefore the assumptions of Theorem \ref{Poincare constant general} are satisfied, so for $\beta$ large enough, the measure $\mu=e^{\beta g}/C$ satisfies PI independent of $\beta$.
\subsection{Example 2: Improving the Log-Sobolev constant for our algorithm}
Let $u: \mathbb{R}^2 \rightarrow \mathbb{R}$ such that
$u(x,y)=\frac{1}{6} (|x|-1)^6\mathds{1}_{|x|\geq 1}-x^2-4x +\frac{y^2}{2}-y$
Then, $u_x=\frac{x}{|x|}(|x|-1)^5 \mathds{1}_{|x|\geq 1} -2x -4$ $u_y=y-1$, $u_{xy}=0$.
The function has a unique critical point at $(x,y)=(2.5567,1)$ which is a non-degenerate local minimum.
 It is easy to see the dissipativity condition as well as a lower bound on the smallest eigenvalue are easily satisfied as well as the rest of the  assumptions of Theorem \ref{theo-LSI constant}, so for $\beta$ large enough the measure $\pi_\beta$ satisfies Log-Sobolev inequality with polynomial dependence on $\beta$.\\ 
\section{Improving the dimension dependence of LSI constant under a convexity at infinity assumption-Proof of Theorem \ref{lemma-comparisonying}}\label{example-compYing}

\newcommand{\tu}{\tilde{u}}
\newcommand{\hu}{\hat{u}}
\newcommand{\tV}{\tilde{V}}
Before establishing our result we prove an intermediate result which is the extension of Lemma 4 in \citet{ma2019sampling} to the Local Lipschitz case.
\begin{lemma}\label{ma-ext}
   Suppose that  $u\in \mathcal{C}^2$ such that \eqref{ass-pol lip} is satisfied. In addition, we assume that there exist $m,R>0$ such that $u$ is $m$-strongly convex on $\bar{B}(0,R)^c$ in the sense of Definition \ref{def-convalt}.\\
   Then, $\pi_\beta$ satisfies LSI with constant independent of the dimension.
\end{lemma}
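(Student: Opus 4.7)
\textbf{Proof plan for Lemma \ref{ma-ext}.}

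The strategy is the classical Bakry--Emery plus Holley--Stroock decomposition, already used in the globally Lipschitz version by \citet{ma2019sampling}. The idea is to write $u = \tilde u + (u - \tilde u)$ where $\tilde u$ is globally $m$-strongly convex (so that $e^{-\beta\tilde u}/Z$ satisfies an LSI by Bakry--Emery) and $u - \tilde u$ is a bounded, compactly supported perturbation (so that Holley--Stroock transfers the LSI to $\pi_\beta$ at the cost of an exponential-in-$\beta$ constant that does not depend on $d$). The novelty compared to the Lipschitz setting is only that the oscillation of the perturbation must be controlled using the polynomial local Lipschitz bound \aref{ass-pol lip} rather than a global Lipschitz bound; since the perturbation lives on a fixed ball, this still yields a dimension-free oscillation.

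First, I would construct $\tilde u \in \mathcal{C}^2(\mathbb{R}^d)$ with $\nabla^2 \tilde u \succeq m I_d$ everywhere and $\tilde u = u$ on $\bar B(0, R+1)^c$. A clean way is to pick a smooth radial cutoff $\chi$ with $\chi \equiv 1$ on $B(0,R)$ and $\chi \equiv 0$ off $B(0,R+1)$, set
\begin{equation*}
\tilde u(x) \;:=\; \chi(|x|)\left(\tfrac{m}{2}|x|^2 + c\right) + (1-\chi(|x|))\, u(x) + \psi(x),
\end{equation*}
and then add a radial corrector $\psi$, supported in $B(0,R+1)$, chosen so that $\nabla^2 \tilde u \succeq m I_d$ in the transition annulus. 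The needed lower bound on the Hessian in the annulus is controlled because $\nabla^2 u$ is bounded on $\bar B(0,R+1)$ (a consequence of \aref{ass-pol lip}), so $\psi$ can be taken as a large multiple of a fixed bump; importantly, its construction is purely radial and has no $d$-dependence in its shape. Alternatively, one may use the cleaner inf-convolution / Moreau-envelope trick to exhibit a globally strongly convex $\tilde u$ that agrees with $u$ outside a slightly larger ball.

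Next, by Theorem \ref{Bakry-Emery}, the measure $\tilde\pi_\beta \propto e^{-\beta \tilde u}$ satisfies $\mathrm{LSI}(\beta m)$. The perturbation $V := \beta(u - \tilde u)$ is continuous and supported in $\bar B(0, R+1)$, hence its oscillation
\begin{equation*}
\mathrm{osc}(V) \;=\; \sup V - \inf V \;\leq\; \beta\left( \sup_{\bar B(0,R+1)}|u| + \sup_{\bar B(0,R+1)}|\tilde u|\right)
\end{equation*}
is finite and, crucially, bounded by a constant that depends only on $m, R, l'$ and the constant $L'$ from \aref{ass-pol lip} (using the polynomial growth of $u$ itself integrated from $0$ over a ball of fixed radius), but not on $d$. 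The Holley--Stroock perturbation theorem then yields that $\pi_\beta = e^{-V}\tilde\pi_\beta / Z$ satisfies $\mathrm{LSI}(\alpha)$ with $\alpha^{-1} \leq (\beta m)^{-1} \exp(\mathrm{osc}(V))$, i.e.\ a constant independent of $d$ and exponential in $\beta$, as claimed.

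The main obstacle is the construction of $\tilde u$: one must verify that the cutoff-plus-corrector prescription indeed produces a globally $m$-strongly convex $\mathcal{C}^2$ function, and that the oscillation of $u - \tilde u$ on $B(0,R+1)$ admits an explicit bound in terms of the assumption parameters $(m, L', l', R)$ only. The polynomial Lipschitz assumption is used exactly here to obtain a dimension-free bound $\sup_{|x|\leq R+1}|\nabla u(x)| \leq L'(1+R+1)^{l'+1}+|\nabla u(0)|$, which in turn controls $\sup|u|$ on $B(0,R+1)$ via integration from the origin. Once these two facts are in hand, the remainder is a direct application of Bakry--Emery and Holley--Stroock.
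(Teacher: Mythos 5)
Your high-level strategy coincides with the paper's: construct a globally $\frac{m}{2}$-strongly convex $\hat u$ agreeing with $u$ outside a ball, apply Bakry--Emery to $\hat\pi_\beta\propto e^{-\beta\hat u}$, and transfer via Holley--Stroock using a $d$-independent oscillation bound on the compactly supported perturbation $\hat u-u$. The oscillation estimate you sketch (local polynomial Lipschitz gives a dimension-free bound on $\sup_{\bar B(0,R+1)}|\nabla u|$, hence on $\sup|u|$) is also the same as the paper's. So the decomposition and the use of the assumption are on target.

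However, your concrete construction of $\hat u$ via ``cutoff $\chi$ plus a radial corrector $\psi$ taken as a large multiple of a fixed bump'' has a real gap. In the transition annulus the Hessian of $\chi(\tfrac{m}{2}|x|^2+c)+(1-\chi)u$ contains indefinite cross terms of the form $\nabla^2\chi\,(\tfrac{m}{2}|x|^2+c-u)+2\,\mathrm{sym}(\nabla\chi\otimes(\tfrac{m}{2}x-\nabla u))$, and a single $C^2$ function $\psi$ supported in $\bar B(0,R+1)$ cannot have $\nabla^2\psi\succeq cI_d$ uniformly up to the outer boundary: at $x_0$ with $|x_0|=R+1$ and any tangential unit vector $w$, the curve $t\mapsto x_0+tw$ lies in $B(0,R+1)^c$ for $t\neq 0$, so $\psi\equiv 0$ along it and $\langle w,\nabla^2\psi(x_0)w\rangle=0$. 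One can try to argue that the cross terms also vanish near $|x|=R+1$ (since $\nabla\chi,\nabla^2\chi\to 0$ there), but balancing the two decay rates is precisely the delicate part that ``large multiple of a fixed bump'' glosses over. The paper avoids this difficulty by a different modification (following \citet{ma2019sampling}): subtract $\tfrac{m}{4}|x|^2$ to get $\tilde u$ which is still $\tfrac m2$-strongly convex outside $\bar B(0,R)$, take the convex extension $V$ of $\tilde u|_{\bar B(0,R)^c}$ (Lemma \ref{flammlemma4adap}), mollify to get a globally convex, smooth $\tilde V$, and then blend $\tilde u$ and $\tilde V$ on an annulus via a smooth $\alpha(x)$. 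The crucial point that makes the blend work is that $\tilde u$ and $\tilde V$ differ by $O(\delta)$ on the blending annulus (see \eqref{eq-b1}--\eqref{eq-b2}), so the problematic cross terms $\nabla^2\alpha\,(\tilde V-\tilde u)$ and $\nabla\alpha\otimes(\nabla\tilde V-\nabla\tilde u)$ are genuinely small, not merely bounded, and can be absorbed into the $\tfrac m2 I_d$ margin by choosing $\delta$ as in \eqref{eq-delta}. Your proposal has no analogous smallness mechanism, so the construction step needs to be replaced or substantially strengthened. Your mention of the Moreau envelope points in a reasonable direction, but it likewise needs to be combined with a convexification and a blending argument to produce a $C^2$ globally strongly convex function.
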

\begin{proof}
The ultimate goal of the proof is to construct a strongly convex and smooth function $\hu$ with Hessian that exists everywhere.
In this proof we extend the work of \citet{ma2019sampling} in the local-Lipschitz gradient case.
The proof strategy is the same but the main difference is that we will work with the local Lipschitz constant in each bounded set, instead of a global one. 
Let us now present the proof.\\
By Assumption \aref{ass-pol lip}, $\nabla u$ is Lipchitz on $\bar{B}(0,3R)$ with Lipschitz constant  $L_R:=L(1+6R)^l.$ We define \[\tu(x)=u(x)-\frac{m}{4}|x|^2\]
  For $x,y \in \bar{B}(0,\frac{3}{2}R),$ \[|\nabla \tu(x)-\nabla\tu(x-y)|\leq (L_R+\frac{m}{2}) |y|\] and
  \begin{equation}\label{eq-tulip}
      \tu(x-y)-\tu(x)\leq \int_0^1 |\nabla \tu(tx+(1-t)(x-y))||y|dt \leq C_R|y|.
  \end{equation}
  where $C_{{R}}:=\left(L_R+\frac{m}{2}+|\nabla u(0)|\right).$
Let \begin{equation}\label{eq-delta}
    \delta\leq \frac{1}{2}\min\{\frac{m}{C_R}, \frac{R}{6}\}.
\end{equation}
Let $\Omega=\bar{B}(0,R)^c$
    and $V$ be the convex extension of $\tu$ on $\mathbb{R}^d$ given as \begin{equation}\label{eq-convext}
        V(x)=\inf _{\substack{\left\{x_i\right\} \subset \Omega,\left\{\lambda_i \mid \sum_i \lambda_i=1\right\}, \\ \text { s.t., } \sum_i \lambda_i x_i=x}}\left\{\sum_{i=1}^l \lambda_i \tilde{u}\left(x_i\right)\right\}, \quad \forall x \in \mathbb{R}^d
    \end{equation}

     By Lemma \ref{flammlemma4adap},
    \[V(x)=\tu(x)\quad \forall x\in \bar{B}(0,R)^c\] and
    \begin{equation}\label{eq-comp1}
        \inf_{|x|=R} \tu(x)\leq V(x)\leq \sup_{|x|=R} \tu(x) \quad \forall x \in \bar{B}(0,R).
    \end{equation}
    Let $\phi\geq 0$ be a mollifier supported on $\bar{B}(0,\delta)$ such that $\int \phi(y)dy=1.$
    Let $\tV$ be a smoothing of $V$ on $\bar{B}(0,\frac{4}{3}R)$ given as
    \[\tV(x)=\int V(x-y) \phi(y)dy.\]
    Then, $\tV$ is convex and smooth on $\mathbb{R}^d.$ It is also $\frac{m}{2}$ strongly convex in $\bar{B}(0,\frac{4}{3}R+\delta)^c.$
    In addition, by definition, \[\inf_{x\in \bar{B}(0,\frac{4R}{3}+\delta)} V(x) \leq \tV(x) \leq \sup_{x\in \bar{B}(0,\frac{4R}{3}+\delta)} V(x),\quad \forall |x|\leq \frac{4}{3}R\]
    so using \eqref{eq-comp1},
    \begin{equation}\label{eq-comp2}
        \inf_{\bar{B}(0,\frac{4R}{3}+\delta)/\bar{B}(0,R)}\tu(x) \leq \tV(x)\leq \sup_{\bar{B}(0,\frac{4R}{3}+\delta)/\bar{B}(0,R)} \tu(x) \quad \forall |x|\leq \frac{4}{3}R.
    \end{equation}
    Finally, we construct the auxiliary function $\hu(x)$ :
$$
\hu(x)-\frac{m}{4}\left|x\right|^2=\left\{\begin{array}{l}
\tu(x), \quad|x|\geq\frac{3}{2} R \\
\alpha(x) \tilde{u}(x)+(1-\alpha(x)) \tilde{V}(x), \quad \frac{4}{3} R<|x|<\frac{3}{2} R, \\
\tilde{V}(x), \quad|x|\leq \frac{4}{3} R
\end{array}\right.
$$
where $\alpha(x)=-\frac{1}{2} \cos \left(\frac{36 \pi}{17} \frac{|x|^2}{R^2}-\frac{64 \pi}{17}\right)+\frac{1}{2}$. Here we know that $\tu(x)$ is $\frac{m}{2}$-strongly convex and smooth in $\mathbb{R}^d \backslash \mathbb{B}(0, R) $ and $ \tilde{V}(x)$ is $\frac{m}{2}$-strongly convex and smooth in ${B}\left(0, \frac{4}{3} R\right)^c$.\\ For $\frac{4}{3} R<|x|<\frac{3}{2} R$,
\begin{equation}\label{eq-Hessianbound}
\begin{aligned}
 \nabla^2\left(\hu(x)-\frac{m}{4}\left|x\right|^2\right) 
& =\nabla^2 \tu(x)+\nabla^2((1-\alpha(x))(\tilde{V}(x)-\tu(x))) \\
& =\alpha(x) \nabla^2 \tu(x)+(1-\alpha(x)) \nabla^2 \tilde{V}(x) \\
& -\nabla^2 \alpha(x)(\tilde{V}(x)-\tu(x))-2 \nabla \alpha(x)(\nabla \tilde{V}(x)-\nabla \tu(x))^T \\
& \geq \frac{m}{2} I_d-\nabla^2 \alpha(x)(\tilde{V}(x)-\tu(x))-2 \nabla \alpha(x)(\nabla \tilde{V}(x)-\nabla \tu(x))^T .
\end{aligned}
\end{equation}
Let $x$ such that $\frac{4}{3}R\leq |x|\leq \frac{3}{2}R.$
For $|y|<\delta<\frac{R}{6}$, one notices that \[|x-y|\geq \left||x|-|y|\right|\geq R\]
which leads to
\[V(x-y)=\tu(x-y)\] and \[\nabla V(x-y)=\nabla \tu(x-y). \]
Then, 
\begin{equation}\label{eq-b1}
    |\nabla \tV(x)-\nabla \tu(x)|\leq \int |\nabla \tu(x-y)-\nabla \tu(x)| \phi(y)dy\leq (L_R+\frac{m}{4})\delta \quad \forall \frac{4}{3}R\leq |x|\leq\frac{3}{2}R
\end{equation}
and 
\begin{equation}\label{eq-b2}
\begin{aligned}
     \tilde V(x)-\tu(x)&=\int (\tu (x-y)-\tu(x))\phi(y) dy\\&\leq \int \left(\int_0^1 |\nabla \tu \left((1-t)(x-y)+ty\right)| |y|dt\right)\phi(y) dy\\&\leq C_{L_R}\delta \quad \forall \frac{4}{3}R\leq |x|\leq\frac{3}{2}R.
\end{aligned}
\end{equation}
Applying \eqref{eq-b1}, \eqref{eq-b2} to \eqref{eq-Hessianbound} one obtains that for $\frac{4}{3}R<|x|<\frac{3}{2}R,$
\[\nabla^2\left( \hu-\frac{m}{4}|x|^2\right)\geq \frac{m}{2}I_d -C_R\delta\geq 0.\]
and by the definition of $\hu$ one concludes that it is $\mathcal{C}^1(\mathbb{R}^d)$, with Hessian that exists everywhere and $\frac{m}{2}$ strongly convex.
By the Bakry-Emery theorem the measure $\hat{\pi}_\beta:=\frac{e^{-\beta \hu(x)}}{\int_{\mathbb{R}^d}e^{-\beta \hu(x)}dx}$ satisfies an LSI with constant $(C_{LSI})^{-1}=\frac{2}{m}.$
In addition, using the definition of $\hu$ one deduces that
\[\hu(x)-\frac{m}{4}|x|^2-\tu(x)=0 \quad for |x|>\frac{3}{2}R.\]
In addition using the bound in \eqref{eq-b2},
\[|\hu(x)-\frac{m}{4}|x|^2-\tu(x)|\leq |1-a(x)||\tV-\tu|\leq \frac{C_{L_R}R}{6} \quad \forall \frac{4}{3}R<|x|<\frac{3}{2}R\]
and due to \eqref{eq-comp2} and \eqref{eq-tulip}, \[|\hu(x)-\frac{m}{4}|x|^2-\tu(x)|\leq \sup_{\bar{B}(0,\frac{4}{3}R)} \tu(x)-\inf_{\bar{B}(0,\frac{4}{3}R)}\tu(x)\leq \frac{4}{3}R C_R \quad \forall |x|\leq \frac{4}{3}R.\]
Putting all together since $\hu(x)-u(x)=\hu(x)-\frac{m}{4}|x|^2-\tilde{u}$,
\[||\hu-u||_\infty\leq \max\{\frac{4}{3}R C_R,\frac{C_{L_R}R}{6}\} \]
which leads to
\begin{equation}\label{eq-oscilation}
\sup(\beta \hu-\beta u)-\inf(\beta \hu-\beta u)\leq 2C'_{R}\beta
\end{equation}
where $C'_{R}:=\max\{\frac{4}{3}R C_R,\frac{C_{L_R}R}{6}\}$.
As a result, using the Hooley-Stook perturbation principle \cite[p.1184]{holley1986logarithmic}, since the $\hat{\pi_\beta}$ satisfies
LSI with constant $\frac{m}{2}$ then, $\pi_\beta$ satisfies an LSI with constant
\[(C_{LSI})^{-1}=e^{\beta 2 C'_{R}}\frac{2}{m}.\]

\end{proof}
\begin{proof}[Proof of Theorem \ref{lemma-comparisonying}.]
    By Lemma \ref{flammanionconnection} $u$ is $m$- strongly convex on $\bar{B}(0,R_m)^c.$ Applying Lemma \ref{ma-ext} completes the result.
\end{proof}
\begin{proof}[Proof of Corollary \ref{cor-reg}]
By Lemma \ref{lemma-reg} it is easy to see that $u$ satisfies the assumptions of Theorem \ref{lemma-comparisonying}. Thus, the result immediately follows.     
\end{proof}
\appendix
\section{Appendix}
\begin{proposition}[\citet{lamberton2011introduction}, Proposition 6.2.4]\label{prop-CIR}
Let $X_t$ the solution of
\[dX_t=(a-bX_t) dt +\sigma \sqrt{Y_t}dB_t \]
with initial condtion $X_0=x$
Then, for any $\lambda>0$ there holds
\[\E(e^{-\lambda_1 X_t}) =\frac{1}{(2 \lambda_1 L+1)^{2 a / \sigma^2}} \exp \left(-\frac{\lambda_1 L \zeta}{2 \lambda_1 L+1}\right)\]
where $L=\left(\sigma^2 / 4 b\right)\left(1-e^{-b t}\right)$ \text { and } $\zeta=4 x b /\left(\sigma^2\left(e^{b t}-1\right)\right).$
Setting $\sigma:=2\sqrt{\frac{2}{\beta}}$, $b=-2l^*$, $a=\frac{1}{\beta}$, $X_0=G(x)$ and $\lambda_1=\frac{-2b}{\sigma^2}$ ,
\[\E(e^{-\lambda_1 X_t})\leq (e^{b t})^\frac{2a}{\sigma^2} =(e^{-2l^* t})^\frac{1}{4}=e^{-\frac{l^*}{2}t}.\]
\end{proposition}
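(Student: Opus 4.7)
The plan is to treat the first identity as a classical, tabulated result for the Laplace transform of a CIR-type square-root diffusion, citing Lamberton--Lapeyre verbatim rather than reproving it. The standard derivation (affine structure of the generator, Riccati ODE for the characteristic exponent) is self-contained and not our focus; I would merely state that for fixed $\lambda_1>0$ the function $\varphi(t,x):=\E_x[e^{-\lambda_1 X_t}]$ satisfies the backward Kolmogorov PDE
\[
\partial_t\varphi = (a-bx)\partial_x\varphi + \tfrac{\sigma^2}{2} x\,\partial_x^2\varphi,\qquad \varphi(0,x)=e^{-\lambda_1 x},
\]
and the affine ansatz $\varphi(t,x)=\exp(-\alpha(t)-\gamma(t)x)$ leads to the Riccati system whose solution gives the displayed closed form. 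This is the routine part, and I would not grind through it in the paper.

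The substantive piece is the second display. Plugging in $\lambda_1 = -2b/\sigma^2$, I would first compute
\[
2\lambda_1 L + 1 \;=\; 2\cdot\frac{-2b}{\sigma^2}\cdot\frac{\sigma^2}{4b}(1-e^{-bt})+1 \;=\; -(1-e^{-bt})+1 \;=\; e^{-bt},
\]
so that the prefactor becomes $(2\lambda_1 L + 1)^{-2a/\sigma^2}=e^{\,bt\cdot 2a/\sigma^2}=(e^{bt})^{2a/\sigma^2}$. Next I would evaluate the exponential factor: writing $\lambda_1 L=(e^{-bt}-1)/2$ and $\zeta = 4xb/(\sigma^2(e^{bt}-1))$, the identity $e^{-bt}-1=-(e^{bt}-1)/e^{bt}$ collapses the product to
\[
\frac{\lambda_1 L\,\zeta}{2\lambda_1 L + 1} \;=\; \frac{-2xb/(\sigma^2 e^{bt})}{e^{-bt}} \;=\; -\frac{2xb}{\sigma^2}.
\]
Since in our application $b=-2l^*<0$ and $x=G(x)\geq 0$, the argument $-\lambda_1 L\zeta/(2\lambda_1 L+1)=2xb/\sigma^2\leq 0$, so the exponential factor is at most $1$ and can be dropped, yielding the bound $\E(e^{-\lambda_1 X_t})\leq (e^{bt})^{2a/\sigma^2}$.

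Finally, inserting the concrete values $\sigma=2\sqrt{2/\beta}$, $b=-2l^*$, $a=1/\beta$ gives $2a/\sigma^2 = (2/\beta)/(8/\beta)=1/4$, so $(e^{bt})^{2a/\sigma^2}=(e^{-2l^*t})^{1/4}=e^{-l^*t/2}$. There is no genuine obstacle: the only subtlety is sign-bookkeeping, since $b<0$ forces $L>0$ and $\zeta>0$ through a cancellation of two negative factors, and one must verify that $e^{-bt}>1$ does not flip the direction of the inequality at any step. Once these signs are tracked carefully the result is a one-line algebraic verification.
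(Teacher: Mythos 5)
Your proposal is correct and follows the same route as the paper: quote the Lamberton--Lapeyre Laplace-transform formula for the CIR process, substitute the specific parameters $\sigma=2\sqrt{2/\beta}$, $b=-2l^*$, $a=1/\beta$, $\lambda_1=-2b/\sigma^2$, and simplify. The paper simply asserts the final inequality without displaying the algebra; you fill it in explicitly, and your computations are right: $2\lambda_1 L+1=e^{-bt}$, so the prefactor is exactly $(e^{bt})^{2a/\sigma^2}$, and $-\lambda_1 L\zeta/(2\lambda_1 L+1)=2xb/\sigma^2\le 0$ since $b<0$ and the initial value $x=G(x)\ge 0$ (so the exponential factor is $\le1$). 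With $2a/\sigma^2=(2/\beta)/(8/\beta)=1/4$ one gets $e^{-l^* t/2}$. One small stylistic note: all of your intermediate manipulations are exact equalities, so no inequality-direction issue arises anywhere except the single step of dropping the factor $\exp(2xb/\sigma^2)\le1$; the worry about $e^{-bt}>1$ "flipping" an inequality is therefore moot, though your sign bookkeeping (noting $L>0$ and $\zeta>0$ because two negatives cancel) is exactly the check one needs to validate the use of the cited identity.
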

\begin{theorem}
    [\citet{wainwright2019high}, Theorem 2.13]\label{subexponential}
    Let a random variable $X$ and suppose that there exist $c_1,c_2>0$ such that 
    \[P(|X|\geq t)\leq c_1 e^{-c_2 t} \quad \forall t >0.\]
    Then, there holds 
    \[\E e^{\frac{c_2}{2}|X|}<\infty.\]
    \end{theorem}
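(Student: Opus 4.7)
The plan is to establish the result via the classical layer-cake (tail integral) representation of the expectation combined with the hypothesized exponential tail bound. The statement is a standard subexponential equivalence criterion, and the proof is essentially a one-step computation once the tail formula is set up; no substantive obstacle arises.

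Concretely, I would set $Y:=e^{(c_2/2)|X|}\geq 1$ and apply Fubini/Tonelli (legitimate since the integrand is nonnegative) in the form
\[\E[Y] = \int_0^\infty P(Y>s)\,ds = 1 + \int_1^\infty P(Y>s)\,ds.\]
For $s\geq 1$, the event $\{Y>s\}$ coincides with $\{|X|>(2/c_2)\log s\}$; performing the substitution $t=(2/c_2)\log s$, so that $ds=(c_2/2)e^{c_2 t/2}\,dt$, converts this into
\[\E[Y]=1+\frac{c_2}{2}\int_0^\infty P(|X|>t)\,e^{c_2 t/2}\,dt.\]

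Substituting the assumed bound $P(|X|\geq t)\leq c_1 e^{-c_2 t}$ then gives
\[\E[Y]\leq 1+\frac{c_1 c_2}{2}\int_0^\infty e^{-c_2 t/2}\,dt \;=\; 1+c_1 \;<\;\infty,\]
which finishes the argument. The one point worth highlighting is the strict gap between the exponent $c_2/2$ appearing in $Y$ and the tail rate $c_2$ governing $P(|X|>t)$: any exponent strictly smaller than $c_2$ produces an integrable integrand, whereas the critical (or supercritical) choice would fail. Since this is the entire difficulty, I do not anticipate any real obstacle; the result is typically quoted verbatim from \citet{wainwright2019high}, and the above is simply its standard one-line verification.
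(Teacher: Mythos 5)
Your layer-cake argument is correct: every step checks out, the substitution $t=(2/c_2)\log s$ is valid, and the resulting bound $\E\bigl[e^{c_2|X|/2}\bigr]\leq 1+c_1$ is finite as claimed. Note that the paper does not actually supply a proof of this statement; it simply quotes it from \citet{wainwright2019high} (Theorem 2.13), so there is no paper argument to compare against. Your self-contained derivation via $\E[Y]=\int_0^\infty P(Y>s)\,ds$, split at $s=1$ and converted to an integral of the tail of $|X|$, is the standard textbook verification and is exactly what one would write out if a proof were wanted. One small editorial remark: you could shorten slightly by bounding $\E[Y]\leq 1+(c_2/2)\int_0^\infty P(|X|>t)e^{c_2t/2}\,dt$ directly without separating the unit mass, but what you have is already clean and correct, including the observation that any exponent strictly below $c_2$ would work equally well.
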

    \begin{proposition}  [\citet{pardoux2014stochastic}, Proposition 2.33]
    Let $F,\bar{F},\sigma:\mathbb{R}\rightarrow \mathbb{R}.$
        Let $Y_t$ $\bar{Y}_t$ the solutions to the one dimensional SDEs with filtration $\mathcal{F}$
        \[Y_t= y_0+\int_0^t F(Y_s)ds +\int_0^t \langle \sigma(Y_s) dB_s\rangle\]
        and \[\bar{Y}_t=y_0++\int_0^t \bar{F}(\bar{Y}_s)ds +\int_0^t \langle \sigma(\bar{Y}_s) dB_s\rangle,\]
        $\Bar{F}$ is $L$- Lipschitz, $\sigma$ is $\frac{1}{2}$- Hoelder continuous and there holds for all $T\geq 0$
        \[\int_0^T |\sigma(Y_t)|^2dt+\int_0^T|\sigma(\bar{Y}_t)|^2 dt<\infty \quad \text{a.s}\]
        and
        \[\int_0^T |F(Y_t)| dt +\int_0^T |\bar{F}(\bar{Y}_t)| dt <\infty \quad \text{a.s}. \]
        Then, 
        \[(\bar{Y}_t-Y_t)^+=\int_0^t (\bar{F}(\bar{Y}_s)-F(Y_s))\theta(\Bar{Y}_s-Y_s)ds +\int_0^t \langle (\sigma(\bar{Y}_s)-\sigma(Y_s))\mathds{1}_{\bar{Y}_s-Y_s>0},dB_s\rangle\]
        where \[\theta(x)= \begin{cases}0, & \text { if } x<0 \\ \frac{1}{2}, & \text { if } x=0 \\ 1, & \text { if } x>0.\end{cases}\]
    \end{proposition}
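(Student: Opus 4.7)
The statement is a Tanaka--Yamada--Watanabe type identity: it expresses $(\bar{Y}_t - Y_t)^+$ as an explicit semimartingale even though the common diffusion coefficient $\sigma$ is only $\tfrac{1}{2}$-Hölder, so that the classical It\^o formula does not apply directly to $x \mapsto x^+$. My plan is to apply It\^o's formula to a suitable $C^2$ approximation $\phi_n$ of $x \mapsto x^+$ and pass to the limit, with the $\tfrac{1}{2}$-Hölder regularity used in a quantitative way to kill the It\^o correction term.

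The first step is the Yamada--Watanabe mollifier. Choose a decreasing sequence $a_n \downarrow 0$ with $a_0 = 1$ and $\int_{a_n}^{a_{n-1}} \tfrac{2}{y \log n}\, dy = 1$, and pick a non-negative $\psi_n \in C_c^\infty((a_n, a_{n-1}))$ with $\int \psi_n = 1$ and $\psi_n(y) \leq \tfrac{2}{y \log n}$. Define $\phi_n(x) = \int_0^{x^+} \!\!\int_0^z \psi_n(y)\,dy\,dz$. Then $\phi_n \in C^2(\mathbb{R})$, $\phi_n(x) \uparrow x^+$, $\phi_n'(x) \to \theta(x)$ pointwise, and $\phi_n''(y) = \psi_n(y)\mathbf{1}_{y>0}$.

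Applying the standard It\^o formula to $\phi_n(\bar{Y}_t - Y_t)$ and using the two SDEs yields
\begin{equation*}
\phi_n(\bar{Y}_t - Y_t) = \int_0^t \phi_n'(\bar{Y}_s - Y_s)\bigl(\bar{F}(\bar{Y}_s) - F(Y_s)\bigr) ds + M^n_t + \tfrac{1}{2}\int_0^t \phi_n''(\bar{Y}_s - Y_s)\bigl(\sigma(\bar{Y}_s) - \sigma(Y_s)\bigr)^2 ds,
\end{equation*}
where $M^n_t = \int_0^t \phi_n'(\bar{Y}_s - Y_s)(\sigma(\bar{Y}_s) - \sigma(Y_s))\, dB_s$. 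The key (and hardest) step is controlling the It\^o correction term. By the $\tfrac{1}{2}$-Hölder assumption, $(\sigma(\bar{Y}_s) - \sigma(Y_s))^2 \leq C|\bar{Y}_s - Y_s|$, and by construction of $\psi_n$, $y\,\phi_n''(y) \leq \tfrac{2}{\log n}$ on the support of $\phi_n''$. Hence the correction term is bounded almost surely by $Ct/\log n$, which vanishes as $n \to \infty$. This Yamada--Watanabe bound is precisely what the $\tfrac{1}{2}$-Hölder regularity is designed to buy, and it is the main obstacle of the proof.

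It remains to pass to the limit in the other two terms. The left-hand side converges monotonically to $(\bar{Y}_t - Y_t)^+$. For the drift term, the integrand converges pointwise to $(\bar{F}(\bar{Y}_s) - F(Y_s))\theta(\bar{Y}_s - Y_s)$ and is dominated by $|F(Y_s)| + |\bar{F}(\bar{Y}_s)|$, which is integrable on $[0,T]$ almost surely by hypothesis, so dominated convergence applies. For the martingale term, the It\^o isometry combined with $(\sigma(\bar{Y}_s) - \sigma(Y_s))^2 \leq C|\bar{Y}_s - Y_s|$ gives uniform $L^2$-boundedness, and since $\phi_n'(y) \to \mathbf{1}_{y>0}$ for $y \neq 0$ (the set $\{\bar{Y}_s = Y_s\}$ contributes zero to the stochastic integral by occupation-times considerations), we obtain $L^2$-convergence of $M^n_t$ to $\int_0^t \mathbf{1}_{\bar{Y}_s - Y_s > 0}(\sigma(\bar{Y}_s) - \sigma(Y_s))\,dB_s$. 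Collecting the three limits yields the claimed identity.
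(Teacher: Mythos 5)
The paper does not prove this proposition --- it states it as a direct citation to \citet{pardoux2014stochastic}, Proposition 2.33 --- so there is no in-paper argument to compare against. Your Yamada--Watanabe mollification scheme is indeed the standard proof of such Tanaka-type identities under $\tfrac12$-H\"older diffusion coefficients, and your identification of the key mechanism (the bound $y\,\phi_n''(y)\leq 2/\log n$ combined with $(\sigma(\bar Y_s)-\sigma(Y_s))^2\leq C|\bar Y_s-Y_s|$ killing the It\^o correction) is exactly right.

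There are two inaccuracies worth flagging. First, you assert ``$\phi_n'(x)\to\theta(x)$ pointwise,'' but for the mollifier $\phi_n(x)=\int_0^{x^+}\int_0^z\psi_n(y)\,dy\,dz$ you construct, $\phi_n'(0)=0$ for every $n$, so the pointwise limit is $\mathbf{1}_{(0,\infty)}$, not $\theta$ (you in fact say exactly this later for the martingale term, creating an internal inconsistency). Consequently your argument, as written, produces the drift integrand $(\bar F(\bar Y_s)-F(Y_s))\mathbf{1}_{\bar Y_s>Y_s}$ rather than $(\bar F(\bar Y_s)-F(Y_s))\theta(\bar Y_s-Y_s)$; the two coincide only when $\mathrm{Leb}\{s\in[0,t]:\bar Y_s=Y_s\}=0$ a.s., which is not given. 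To land exactly on $\theta$ you should instead mollify the even function $|x|$ symmetrically (so that $\phi_n'\to\mathrm{sgn}$ with the convention $\mathrm{sgn}(0)=0$) and then use $x^+=\tfrac12(|x|+x)$. For the paper's downstream application in Corollary \ref{cor-comparison}, where the drift integral is only upper-bounded, this distinction is immaterial, but it does matter for reproducing the proposition verbatim. Second, the hypotheses only give $\int_0^T|\sigma(Y_t)|^2\,dt<\infty$ a.s., not in expectation, so $M_t^n$ is a priori a local martingale; the claimed ``uniform $L^2$-boundedness via It\^o isometry'' requires a localization step (stop at $\tau_K:=\inf\{t:\int_0^t|\sigma(\bar Y_s)-\sigma(Y_s)|^2\,ds\geq K\}$, pass to the $L^2$-limit on $[0,t\wedge\tau_K]$ by dominated convergence, then send $K\to\infty$). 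Both issues are repairable without changing the strategy.
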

    \begin{corollary}\label{cor-comparison}
        Let $Y,\bar{Y}$ as in the previous proposition and further suppose that 
        \[\int_0^t\E|Y_s| +\E | \bar{Y}_s| <\infty.\]
        Let $\tau$ a stopping time adapted to $\mathcal{F}_t.$
        Assume that if $t\leq \tau$, \[\bar{F}(Y_t)\leq F(Y_t)\]
        Then,
        \[Y_{\tau \wedge t}\geq \bar{Y}_{\tau \wedge t}\]
    \end{corollary}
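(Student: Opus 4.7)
The plan is to use the pathwise identity from the preceding proposition evaluated at the bounded stopping time $t\wedge \tau$, take expectations to kill the stochastic integral, and then close a Gr\"onwall loop for $\mathbb{E}\bigl[(\bar Y_{t\wedge\tau}-Y_{t\wedge\tau})^+\bigr]$, noting that vanishing of this expectation is equivalent to the desired $\bar Y_{t\wedge\tau}\le Y_{t\wedge\tau}$ a.s. The initial conditions $Y_0=\bar Y_0=y_0$ of the previous proposition give a zero baseline, so only the growth of the difference matters.

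Concretely, I would first apply the identity at time $t\wedge\tau$ to get
\[
(\bar Y_{t\wedge\tau}-Y_{t\wedge\tau})^+=\int_0^{t\wedge\tau}(\bar F(\bar Y_s)-F(Y_s))\,\theta(\bar Y_s-Y_s)\,ds+M_{t\wedge\tau},
\]
where $M_t=\int_0^t\langle(\sigma(\bar Y_s)-\sigma(Y_s))\mathds{1}_{\bar Y_s-Y_s>0},dB_s\rangle$. Next I would verify that $M$ is a genuine $L^2$ martingale: the $1/2$-H\"older continuity of $\sigma$ gives $|\sigma(\bar Y_s)-\sigma(Y_s)|^2\le C|\bar Y_s-Y_s|$, and combined with the integrability assumption $\int_0^t(\mathbb{E}|Y_s|+\mathbb{E}|\bar Y_s|)\,ds<\infty$ this yields $\mathbb{E}\langle M\rangle_t<\infty$. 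Optional stopping at the bounded stopping time $t\wedge\tau$ then gives $\mathbb{E} M_{t\wedge\tau}=0$.

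For the drift term, I would decompose on the set $\{\bar Y_s>Y_s\}\cap\{s\le\tau\}$ by writing $\bar F(\bar Y_s)-F(Y_s)=\bigl(\bar F(\bar Y_s)-\bar F(Y_s)\bigr)+\bigl(\bar F(Y_s)-F(Y_s)\bigr)$; the hypothesis $\bar F(Y_s)\le F(Y_s)$ for $s\le\tau$ makes the second bracket nonpositive, while Lipschitz continuity of $\bar F$ bounds the first by $L(\bar Y_s-Y_s)$. Since $\theta\le 1$ and the indicator $\mathds{1}_{s\le\tau}$ turns $s$ into $s\wedge\tau$ inside the relevant integrand, the drift is bounded above by $L\,(\bar Y_{s\wedge\tau}-Y_{s\wedge\tau})^+$. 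Taking expectations in the displayed identity therefore produces
\[
\mathbb{E}\bigl[(\bar Y_{t\wedge\tau}-Y_{t\wedge\tau})^+\bigr]\le L\int_0^t \mathbb{E}\bigl[(\bar Y_{s\wedge\tau}-Y_{s\wedge\tau})^+\bigr]\,ds.
\]

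An application of Gr\"onwall's inequality to the (finite, by the integrability hypothesis) nonnegative function $\varphi(t):=\mathbb{E}\bigl[(\bar Y_{t\wedge\tau}-Y_{t\wedge\tau})^+\bigr]$ forces $\varphi\equiv 0$, so $\bar Y_{t\wedge\tau}\le Y_{t\wedge\tau}$ almost surely, which is exactly the stated conclusion. The main technical obstacle is the martingale step: one must ensure that the stochastic integrand, despite the only $1/2$-H\"older control on $\sigma$, sits in $L^2$ up to time $t\wedge\tau$; this is where the integrability assumption $\int_0^t(\mathbb{E}|Y_s|+\mathbb{E}|\bar Y_s|)\,ds<\infty$ is essential, as it upgrades the local martingale $M$ to a true martingale and legitimises the vanishing of its expectation at the stopped time.
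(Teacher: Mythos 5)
Your proposal follows the paper's proof essentially step for step: apply the pathwise Tanaka-type identity from the preceding proposition at the stopped time, kill the stochastic integral in expectation via the $1/2$-H\"older bound on $\sigma$ together with the first-moment integrability hypothesis, split the drift into a Lipschitz term $\bar F(\bar Y_s)-\bar F(Y_s)$ plus a nonpositive comparison term $\bar F(Y_s)-F(Y_s)$, and close with Gr\"onwall on $\varphi(t)=\mathbb{E}\bigl[(\bar Y_{t\wedge\tau}-Y_{t\wedge\tau})^+\bigr]$. The argument is correct and matches the paper's reasoning, with your treatment of the martingale vanishing (explicit optional stopping) being slightly more careful than the paper's terse statement.
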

    \begin{proof}
        Using the previous Proposition one obtains
        \begin{equation}\label{eq-plus}
            (\bar{Y}_t-Y_t)^+=\int_0^t (\bar{F}(\bar{Y}_s)-F(Y_s))\theta(\Bar{Y}_s-Y_s)ds +\int_0^t \langle (\sigma(\bar{Y}_s)-\sigma({Y}_s))\mathds{1}_{\bar{Y}_s-Y_s>0},dB_s\rangle
        \end{equation}
        Using the fact that $\sigma$ is Hoelder continuous and the first moments of $Y,\bar{Y}$ are finite then,
        \[\E \int_0^{t\wedge \tau}(\sigma(Y_s)-\sigma(\bar{Y}_s))^2\mathds{1}_{\bar{Y}_s-Y_s>0} dt<\infty \] so after applying \eqref{eq-plus} for the time $t\wedge \tau$ and taking expectations,
        the expectation of the stochastic integral vanishes which leads to
        \[\begin{aligned}
            \E (\bar{Y}_{t\wedge \tau}-Y_{t\wedge \tau})^+&=\E \int_0^{t\wedge \tau} (\bar{F}(\bar{Y}_s)-F(Y_s))\theta(\bar{Y}_s-{Y}_s)ds
            \\&=\E \int_0^t \mathds{1}_{(0,\tau)}(s) (\bar{F}(\bar{Y}_s)-\bar{F}(Y_s))\theta(\Bar{Y}_s-Y_s)ds
            \\&+ \E \int_0^t \mathds{1}_{(0,\tau)}(s) (\bar{F}(Y_s)-{F}(Y_s)) \theta(\Bar{Y}_s-Y_s)ds
        \end{aligned}\]
        Using the fact that the second term is non-positive, then using the Lipschitz property of $\bar{F}$ one obtains
        \[\E (\bar{Y}_{t\wedge \tau}-Y_{t\wedge \tau})^+\leq L \E \int_0^t \mathds{1}_{(0,\tau)}(s) (\bar{Y}_s-Y_s)^+ ds\leq L  \int_0^t \E (\bar{Y}_{s\wedge \tau}-Y_{s\wedge \tau})^+ ds.   \]
        Since the right hand side is finite, and $\E(\bar{Y}_{0\wedge \tau}-Y_{0\wedge \tau})^+=0,$
        an application of Grownwall's lemma yields
        \[\E(\bar{Y}_{t\wedge \tau}-Y_{t\wedge \tau})^+=0\]
        which means that
        \[Y_{t\wedge \tau}\geq \bar{Y}_{t\wedge \tau} \quad \text{a.s}\].
        
    \end{proof}

\subsection{Important lemmas for proof of exchanges in derivative and integral}
In this Section, we recall assumptions \aref{ass-derivbound}, \aref{ass-initial cond}.
Throughout the proof we will use generic constants, which need not be specified, as we are only interested in the growth of the different coefficients with respect to the state variable.
This results shall be used to show the finiteness of specific integrals.
Throughout this section many matrix calculus identities will be used.
The interested reader can point to \citet{petersen2008matrix}. 
\newtheorem{Lemma1}[Def1]{Lemma}
\begin{lemma}\label{pt-decay}
Let Assumptions
\aref{ass-derivbound}, \aref{ass-initial cond} hold. Suppose that $\sqrt{\lambda}\leq \frac{1}{2C_1}$ where $C_1:=2a +4L$ where $L$ and $a$ are given in Assumptions \aref{ass-derivbound} and \aref{ass-2dissip}
Then, there exist constants $A,r>0$, independent of $x$, uniform in a small neighbourhood of $t$, such that  \[|\pt(x)|\leq Ae^{-r|x|^2}.\]
\end{lemma}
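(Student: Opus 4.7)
The plan is to proceed by induction on the sub-interval index $k \in \mathbb{N}_0$ with $t \in [k\lambda, (k+1)\lambda]$. The base case $k = 0$, $t = 0$ follows directly from Assumption \aref{ass-initial cond}, where I read ``exponential decay'' as the Gaussian-type bound $\hat{\pi}_0(x) \leq A_0 e^{-r_0|x|^2}$ (the only form consistent with $|\nabla\log\hat{\pi}_0|$ having polynomial growth and with the target bound of this lemma). For the inductive step I fix $k$, assume $\hat{\pi}_{k\lambda}(y) \leq A_k e^{-r_k|y|^2}$, and set $s := t - k\lambda \in (0, \lambda]$. Since $\theta_t \mid \theta_{k\lambda} = y$ is $\mathcal{N}(y - s h_\lambda(y),\, (2s/\beta) I_d)$,
\[
\hat{\pi}_t(x) = \int_{\mathbb{R}^d} \hat{\pi}_{k\lambda}(y) \left(\frac{\beta}{4\pi s}\right)^{d/2} \exp\!\left(-\frac{\beta}{4s}|x - y + s h_\lambda(y)|^2\right) dy.
\]

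I would split this integral along $\{|y| \leq \rho|x|\}$ and its complement, where $\rho \in (0,1)$ is a small constant chosen so that $\rho(1 + \lambda_{\max} a) < 1/2$. On $\{|y| > \rho|x|\}$ the inductive hypothesis together with the inequality $r_k|y|^2 \geq r_k|y|^2/2 + r_k\rho^2|x|^2/2$ extracts the factor $e^{-r_k \rho^2|x|^2/2}$; bounding the Gaussian kernel by its sup $(\beta/(4\pi s))^{d/2}$ leaves a convergent integral in $y$. On $\{|y| \leq \rho|x|\}$, Lemma \ref{growth-tamed} gives $|y - s h_\lambda(y)| \leq (1 + sa)|y| + sC_h/\sqrt{\lambda}$, and the stepsize restriction $\sqrt{\lambda} \leq 1/(2C_1)$ forces $sC_h/\sqrt{\lambda} \leq \sqrt{\lambda}\, C_h$ to be uniformly bounded; the reverse triangle inequality then yields a linear lower bound $|x - y + s h_\lambda(y)| \geq c_1|x|$ for a positive constant $c_1$ once $|x|$ exceeds a universal threshold. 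The Gaussian factor thus decays as $\exp(-c_2 \beta |x|^2/s)$, while the $\hat{\pi}_{k\lambda}$-mass of the region is at most $1$.

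Summing both contributions produces $\hat{\pi}_t(x) \leq A(s) e^{-r|x|^2}$ for all large $|x|$ with $r = \min(r_k \rho^2/2,\, c_2\beta/\lambda)$; for bounded $|x|$ the density is dominated by the sup of the Gaussian kernel times the total mass of $\hat{\pi}_{k\lambda}$, and the constants can be inflated to cover that region. Since the bounds depend continuously on $s$ away from $0$, they are uniform over any compact sub-interval of $(k\lambda, (k+1)\lambda]$; the endpoint $t = k\lambda$ is handled by the inductive hypothesis directly. Evaluating at $t = (k+1)\lambda$ propagates the bound to the next sub-interval, closing the induction. The main obstacle I foresee is the lack of a clean change-of-variables identity for the convolution due to the nonlinear taming $h_\lambda$, which I bypass with the region-splitting dichotomy above rather than by inverting $y \mapsto y - s h_\lambda(y)$; the blow-up of $A(s)$ as $s \to 0^+$ is not an issue since only local uniformity in $t$ is required.
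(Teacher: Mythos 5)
Your plan and the paper's proof agree on the overall structure (induction over sub-intervals, exploiting the Gaussian-type decay of $\hat\pi_0$), but the mechanisms differ. The paper represents $\hat\pi_t$ as a convolution $p_s \ast \mu_s$ of a deterministic push-forward density $p_s$ with a Gaussian $\mu_s$, and controls $p_s$ by showing the map $\phi(y)=y-s h_\lambda(y)$ has Jacobian trapped between $\tfrac12 I_d$ and $\tfrac32 I_d$ under the step-size restriction, hence is a bi-Lipschitz $\mathcal{C}^2$ diffeomorphism whose inversion gives $p_s(x)\le 2^d\,\hat\pi_{k\lambda}(\phi^{-1}(x))$. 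You instead split the integral $\hat\pi_t(x)=\int\hat\pi_{k\lambda}(y)\,q_s(x,y)\,dy$ over $\{|y|\le\rho|x|\}$ and its complement and bound the Gaussian kernel $q_s$ by its sup $(\beta/4\pi s)^{d/2}$.

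That last step is where a genuine gap appears, and it is not harmless. On both of your regions (and on the bounded-$|x|$ region you treat separately at the end) the prefactor $(\beta/4\pi s)^{d/2}$ diverges as $s=t-k\lambda\to 0^+$, so your final constant $A(s)$ blows up near the left endpoint of each sub-interval. You dismiss this as "not an issue," but the lemma is invoked precisely to furnish dominating functions for the interchange of $d/dt$ and $\int dx$ in Lemmas \ref{cor-timechange} and \ref{divergence}, and those dominations must hold uniformly over a neighbourhood of every $t\in[k\lambda,(k+1)\lambda]$ — including $t=k\lambda$, whose right-sided neighbourhood is exactly the regime $s\to 0^+$. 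Your argument as written gives no uniform bound there. The paper avoids this because the bi-Lipschitz inversion yields $\|p_s\|_\infty\le 2^d\|\hat\pi_{k\lambda}\|_\infty$ uniformly in $s$, and convolving a density with Gaussian decay against a Gaussian of variance $O(\lambda/\beta)$ produces Gaussian decay with constants uniform in $s\in(0,\lambda]$. To repair your route without the change of variables, you would need to avoid the crude sup bound on $q_s$ and instead integrate it out — but the cleanest way to do that, $\int q_s(x,y)\,e^{-r_k|y|^2/2}\,dy\le 2^d\int(\beta/4\pi s)^{d/2}e^{-\beta|x-z|^2/4s}\,dz=2^d$, itself uses the substitution $z=\phi(y)$ with Jacobian bound, i.e., the very inversion you set out to bypass. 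So the dichotomy is a fine organizing idea, but the bi-Lipschitz control of $\phi$ is not optional here; you need it (or an equivalent) to close the $s\to 0^+$ uniformity.
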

\begin{proof}
The proof will be done by induction. First of all,
$\hat{\pi}_0$ decays exponentially. We assume the this is also true for $\pkl$ i,e there exist $A,r$ such that
\begin{equation}\label{eq-ass}
    \pkl(x)\leq  Ae^{-r|x|^2}.
\end{equation}
Let \begin{equation}
    \phi(x)=x-(t-k\lambda)h_\lambda(x).
\end{equation}
Let $Z=\phi(x_{k\lambda})=x_{k\lambda}-(t-k\lambda)h_\lambda(x_{k\lambda})$ and let $p(\cdot)$ be its density. Then, $\pt=p*\mu$ where $\mu$ is the distribution of $\mathcal{N}(0,t-\kappa \lambda)$.

Taking derivatives with respect to $x$ yields
\[J_\phi(x)=(1-(t-k\lambda)a)I_d -(t-k\lambda)J_{f\lambda}\]
where $f_\lambda= (h(x)-ax)g_{\lambda}$
and $g_\lambda=\frac{1}{1+\sqrt{\lambda}|x|^{2l}}$.
As a result, for $H:=\nabla^2 u$, one calculates
\[\begin{aligned}
(t-k \lambda)(a I_d +||J_{f_\lambda}||)&\leq \lambda(a + ||(H-aI_d)g_\lambda + \nabla g_\lambda \otimes (h(x)-ax)||
\\&\leq \lambda( a +(||H||+a)g_\lambda + |\nabla g_\lambda(x) ||h(x)-a x|)\\&\leq (2a +L+1) \sqrt{\lambda}\leq \frac{1}{2}.
\end{aligned}\]
which means that $\frac{3}{2}I_d>J_\phi>\frac{1}{2} I_d$ i.e $J_\phi$ is positive semidefinite which implies that $\phi$ is a bi-Lipschitz map i.e 
\[\frac{1}{2}|x-y|\leq |\phi(x)-\phi(y)|\leq \frac{3}{2}|x-y| \quad \forall x,y \in \mathbb{R}^d.\]
Using Theorem A (for $\mathcal{C}^2$ functions) in \cite{gordon1972diffeomorphisms} and the inverse function theorem, one deduces that $\phi$ is bijective and there exists a $\mathcal{C}^2$ inverse map $\phi^{-1} :\mathbb{R}^d\rightarrow \mathbb{R}^d$ such that \[J_{\phi^{-1}}=(J_\phi)^{-1}.\]
By the definition of the Jacobian of $\phi^{-1}$ one deduces that it is also Lipschitz, since $J_\phi$ is lower bounded. 
By the inversion formula it easy to see that
\begin{equation}\label{eq-p}
    p(x)=\frac{\pkl(\phi^{-1}(x))}{det(J_\phi(\phi^{-1}(x))}\leq \frac{\pkl(\phi^{-1}(x))}{\left(\bar{\lambda}_{min}(J_\phi)\right)^d}\leq \pkl(\phi^{-1}(x)){2^d}.
\end{equation}
By assumption \ref{eq-ass} \[\pkl(\phi^{-1}(x))\leq A e^{-r |\phi^{(-1)}(x)|^2}\] and since
\[||x|-|\phi(0)||\leq |\phi(\phi^{-1}(x))-\phi(0)|\leq 2 |\phi^{-1}(x)|\]
one deduces that \begin{equation}\label{eq-p final}
    p(x)\leq A e^{-r |\phi^{-1}(x)|^2}\leq  A' e^{-r''|x|^2}.
\end{equation}
To complete the proof it suffices to notice that
\[\begin{aligned}
    \hat{\pi}_{t}(x)&=p * \mu(x)\\&=\int p(y) \mu(x-y) d y \\&\leq \int A' e^{-r|y|^{2}}(2 \pi(t-k \eta))^{-\frac{d}{2}} e^{-\frac{|x-y|^{2}}{2(t-k \eta)}} d y \\&\leq A'' e^{-r''|x|^{2}}
\end{aligned}\]
which leads to the result by induction.
\end{proof}
\newtheorem{highdivpolgrowth}[Def1]{Lemma}
\begin{lemma}
Let Assumptions
\aref{ass-derivbound}, \aref{ass-initial cond} hold.
Then, 
\[\max\{|| J^{(2)}_\phi(x)||,||J^{(3)}_\phi(x)||\}\leq C(1+|x|^q)  \]
for some $q\in \mathbb{N}.$
\end{lemma}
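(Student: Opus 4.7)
The plan is to reduce the bound on $J^{(2)}_\phi$ and $J^{(3)}_\phi$ to a polynomial bound on higher derivatives of $h_\lambda$, and then to exploit the multiplicative structure $h_\lambda(x)=ax+f(x)g_\lambda(x)$ with $g_\lambda(x)=1/(1+\sqrt{\lambda}|x|^{2l})$ to combine Assumption \aref{ass-derivbound} with easy polynomial estimates on $g_\lambda$. Since $\phi(x)=x-(t-k\lambda)h_\lambda(x)$ and the identity map has vanishing second and third Jacobians, it suffices to prove
\[
\|J^{(i)}(h_\lambda)(x)\|\leq C(1+|x|^{q_i}),\qquad i=2,3,
\]
for suitable exponents $q_2,q_3$, and then absorb the prefactor $|t-k\lambda|\leq \lambda\leq 1$.

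Next I would decompose $h_\lambda=ax+f_\lambda$, noting that for $i\geq 2$ the linear part contributes zero, so I only need to control $J^{(i)}(f_\lambda)=J^{(i)}(f\cdot g_\lambda)$. By a generalised Leibniz rule, $J^{(i)}(f g_\lambda)$ can be written as a finite sum of tensor products of $J^{(j)}(f)$ and $J^{(i-j)}(g_\lambda)$ for $0\leq j\leq i$. Assumption \aref{ass-derivbound} directly gives $\|J^{(j)}(f)\|\leq \|J^{(j)}(h)\|+a\mathds{1}_{j=1}\leq L'(1+|x|^{2l})$ (with obvious modifications for $j=0,1$), which is a polynomial bound of the required type. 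The main calculation is therefore to show that each derivative $J^{(m)}(g_\lambda)$ has at most polynomial growth in $|x|$.

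To handle $g_\lambda$, I would write $g_\lambda=1/\psi$ with $\psi(x)=1+\sqrt{\lambda}|x|^{2l}\geq 1$, and observe that derivatives of $\psi$ are polynomials in $x$: $\nabla\psi$, $\nabla^2\psi$, $\nabla^3\psi$ all grow at most like $(1+|x|^{2l-1})$, $(1+|x|^{2l-2})$ and $(1+|x|^{2l-3})$ respectively (with the convention that negative-exponent terms are absent). Then, using the Fa\`a di Bruno formula for the composition $g_\lambda=\Phi\circ\psi$ with $\Phi(t)=1/t$, one gets $J^{(m)}(g_\lambda)$ as a finite sum of products of $\Phi^{(k)}(\psi)=\pm k!/\psi^{k+1}$ with derivatives of $\psi$. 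Each such term is bounded by a polynomial in $|x|$, because $\psi\geq 1$ and $\sqrt{\lambda}\leq 1$. This yields $\|J^{(m)}(g_\lambda)(x)\|\leq C_m(1+|x|^{2lm})$ for $m=0,1,2,3$.

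Combining these with the Leibniz decomposition and Assumption \aref{ass-derivbound} for $j=0,1,2,3$, every term in the expansion of $J^{(i)}(f g_\lambda)$ is dominated by a polynomial in $|x|$, and summing these finitely many polynomial bounds yields the desired estimate $\|J^{(i)}(h_\lambda)\|\leq C(1+|x|^q)$ for some $q\in\mathbb{N}$ depending only on $l$ and $i$. Multiplying by $|t-k\lambda|\leq 1$ gives the required bound on $\|J^{(i)}_\phi\|$ for $i=2,3$. The only slightly delicate point is keeping track of which combinations of indices in Fa\`a di Bruno produce the highest-degree polynomial; but since everything is in a finite sum with uniformly bounded prefactor $\sqrt{\lambda}\leq 1$, this is a bookkeeping issue rather than a genuine obstacle.
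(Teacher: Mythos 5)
Your proposal is correct and follows essentially the same route as the paper's (very terse) proof: decompose $h_\lambda = ax + f\cdot g_\lambda$, note the linear part drops out for $i\geq 2$, and combine the polynomial growth of the derivatives of $f$ from Assumption \aref{ass-derivbound} with boundedness (you use the weaker but sufficient polynomial bound) of the derivatives of $g_\lambda=1/(1+\sqrt\lambda|x|^{2l})$ via the Leibniz rule. The only minor difference is that the paper observes the derivatives of $g_\lambda$ are in fact \emph{bounded} (indeed decaying), rather than merely of polynomial growth as your Fa\`a di Bruno estimate gives; both suffice for the claimed conclusion.
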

\begin{proof}
    Since the derivatives of $f(x)=h(x)-ax$ have polynomial growth and the derivatives of $\frac{1}{1+\sqrt{\lambda}|x|^{2l}}$ are always bounded, the result follows easily.
\end{proof}
\newtheorem{gradlog1 growth}[Def1]{Lemma}
\begin{lemma}\label{gradlog1 growth}
Let Assumptions
\aref{ass-derivbound}, \aref{ass-initial cond} hold.
Then, there exist $C$,$q'$>0, independent of $x$, uniform in small neighbourhood of $t$ such that \[|\nabla_x \log \hat{\pi}_t(x)|\leq C(1+|x|^{q'}).\]
\end{lemma}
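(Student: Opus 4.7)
The plan is to use the convolutional structure $\hat{\pi}_t = p \ast G_\sigma$ established in the proof of Lemma~\ref{pt-decay}, where $G_\sigma$ is the Gaussian density with variance $\sigma^2 = 2(t-k\lambda)/\beta$ and $p(z) = \hat{\pi}_{k\lambda}(\phi^{-1}(z))\,|\det J_{\phi^{-1}}(z)|$ is the pushforward of $\hat{\pi}_{k\lambda}$ under $\phi$. Differentiating under the integral (justified by the Gaussian tail of $G_\sigma$ and the exponential decay from Lemma~\ref{pt-decay}), I would obtain the identity
$$
\nabla \log \hat{\pi}_t(x) \;=\; \mathbb{E}\bigl[\nabla \log p(Z) \,\bigm|\, X = x\bigr],
$$
with $Z \sim p$ and $X = Z + \sigma W$, $W \sim \mathcal{N}(0, I_d)$ independent.

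Next I would transfer the polynomial growth of $\nabla \log \hat{\pi}_{k\lambda}$ to $\nabla \log p$. Writing
$$
\nabla \log p(z) \;=\; J_{\phi^{-1}}(z)^\top \nabla \log \hat{\pi}_{k\lambda}(\phi^{-1}(z)) \;+\; \nabla_z \log |\det J_{\phi^{-1}}(z)|,
$$
the first term has polynomial growth in $|z|$ because $J_{\phi^{-1}}$ is bounded (bi-Lipschitz) and $|\phi^{-1}(z)|$ grows linearly in $|z|$, while the second is polynomial because $\phi$ has polynomially growing derivatives under Assumption~\aref{ass-derivbound}. The base case comes from Assumption~\aref{ass-initial cond}, and the inductive step over $k$ parallels that in the proof of Lemma~\ref{pt-decay}.

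Combining these, $|\nabla \log \hat{\pi}_t(x)| \leq C\,\mathbb{E}[1+|Z|^{q''} \mid X=x] \leq C'\bigl(|x|^{q''} + \mathbb{E}[|W|^{q''} \mid X=x]\bigr)$ using $|Z| \leq |x| + \sigma|W|$. The remaining task is to bound the conditional moment of $|W|$, written as a ratio of Gaussian-weighted integrals of $p$. After completing the square, the numerator admits an explicit Gaussian estimate using the exponential upper bound on $p$ from Lemma~\ref{pt-decay}, and the denominator $\hat{\pi}_t(x)$ is controlled from below by an exponential lower bound obtained by integrating the polynomial growth of $\nabla \log \hat{\pi}_{k\lambda}$ along radial segments and then convolving with $G_\sigma$.

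The main obstacle is matching the decay rates in the numerator and denominator. A crude lower bound of the form $\hat{\pi}_t(x) \geq c\,e^{-D|x|^2}$ with $D$ larger than the upper-bound rate would create an exponential blow-up in the ratio. The resolution is to keep the inductive hypothesis sharp by tracking a matching Gaussian-exponential lower bound for $\hat{\pi}_{k\lambda}$, so that after completing the square in both Gaussian integrals the $|x|^2$-exponents cancel and only polynomial factors in $|x|$ survive, yielding the claim uniformly in a small neighbourhood of $t$.
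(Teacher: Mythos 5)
The first half of your proposal matches the paper exactly: both decompose $\nabla\log p(z) = J_{\phi^{-1}}(z)^\top\nabla\log\hat{\pi}_{k\lambda}(\phi^{-1}(z)) - \nabla_z\log\det J_\phi(\phi^{-1}(z))$, use the bi-Lipschitz bounds on $J_{\phi^{-1}}$ and the polynomial growth of higher derivatives of $\phi$ to get $|\nabla\log p(z)|\leq C(1+|z|^{q'})$ inductively, and then pass to the convolution $\hat{\pi}_t = p\ast\mu$ via $\nabla\log\hat{\pi}_t(x) = \mathbb{E}[\nabla\log p(Z)\mid X=x]$.

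The gap is in the last step. You propose to bound $\mathbb{E}[1+|Z|^{q'}\mid X=x]$ by plugging the Gaussian upper bound $p(y)\leq A e^{-r|y|^2}$ from Lemma~\ref{pt-decay} into the numerator and a Gaussian lower bound $p(y)\geq a e^{-s|y|^2}$ into the denominator, hoping to "match the rates." This cannot work: for a non-Gaussian $p$ one necessarily has $s>r$, and after completing the square each integral contributes a factor $e^{-\varphi(r)|x|^2}$ resp.\ $e^{-\varphi(s)|x|^2}$ with $\varphi$ strictly increasing, so the ratio carries a factor $e^{(\varphi(s)-\varphi(r))|x|^2}$ that blows up. There is no "sharp inductive hypothesis" that closes this gap, because the upper and lower exponential rates of a fixed non-Gaussian density differ by a fixed positive amount; keeping them equal through the iteration would force $p$ to be exactly Gaussian.

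The paper sidesteps this entirely and never proves or tracks any lower bound on $p$ or $\hat{\pi}_{k\lambda}$. Instead it splits the integration domain at radius $K(R+|x|)$: on the inner ball one bounds $(1+|y|^{q'})\leq 1 + (K(R+|x|))^{q'}$ directly and the remaining $p\mu$-mass cancels against the denominator (the ratio is $\leq 1$); on the outer region one uses the decay of the Gaussian kernel $\mu(x-y)$ (not of $p$) to get a bound of the form $C e^{-c_1|x|^2 - c_1 R^2}$, and the denominator is bounded below by restricting to a fixed ball $B(0,R)$ carrying at least a fifth of $p$'s mass, giving $\geq C' e^{-c_1|x|^2 - c_1 R^2}$ with the \emph{same} rate $c_1 = 1/\sigma^2$ since both bounds are read off from the \emph{same} kernel $\mu$. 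The exponential rates cancel automatically, leaving only the polynomial factor. That fixed-ball trick is the ingredient your argument is missing and is what makes the estimate go through without any lower bound on $p$.
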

\begin{proof}
Recall that the function $\phi$ has the following properties:
\begin{itemize}
\item  $\phi,\phi^{-1}: \mathbb{R}^d\rightarrow \mathbb{R}^d$ are $\mathcal{C}^3$ mappings.
    \item $\phi$ is Lipschitz, i.e $J_{\phi}$ is bounded.
    \item $\phi^{-1}$ is Lipschitz, i.e $J_{{\phi}^{-1}}(x)=(J_{\phi})^{-1}(\phi^{-1}(x))$ is bounded.
\end{itemize}
The proof shall be done by induction. We assume that $|\nabla \log \pi_{k\lambda}|$ has polynomial growth, a condition which is true for $|\nabla \log \pi_0|$.
The proof starts with the following decomposition \begin{equation}\label{eq-decop1}
    \nabla \log p(x)=\nabla_x \log (\pi_{k\lambda}(\phi^{-1}(x)))-\nabla_x \log (det(J_{\phi} (\phi^{-1})(x))).
\end{equation}
For the first term,
\begin{equation}\label{eq-21}
    |\nabla_x\log (\pi_{k\lambda}(\phi^{-1}(x)))|\leq ||J _{\phi^{-1}}(x))|| |\nabla \log \pi_{k\lambda} (\phi^{-1}(x))|
\end{equation}
using the fact that $|\nabla \log \pi_{k\lambda}| $ has polynomial growth and that $\phi^{-1}$ is 2-Lipschitz and 2-times differentiable ( as proved in the previous Lemma) and \eqref{eq-21} becomes
\begin{equation}\label{eq-21imp}
|\nabla_x\log (\pi_{k\lambda}(\phi^{-1}(x)))|\leq C (1+|\phi^{-1}(x)|^q)||J_{\phi^{-1}}(x)||\leq C' (1+2|x|^q).
\end{equation}
For the second term, one notices that
\[\log det(J_\phi (\phi^{-1}(x))=\log \prod_{i=1}^d \bar{\lambda}_i (J_\phi(\phi^{-1}(x)))=\sum_{i=1}^d \log \bar{\lambda}_i (J \phi(\phi^{-1}(x))). \]
Taking derivatives yields
\[\begin{aligned}|\frac{\partial}{\partial x_i}\log det(J_{\phi} (\phi^{-1}(x))|&=|tr\left(\frac{\partial}{\partial x_i} (J_\phi (\phi^{-1}(x)))\left(J_\phi(\phi^{-1}(x)))\right)^{-1}\right)|\\&\leq d ||\left(J_\phi(\phi^{-1}(x)))\right)^{-1}||||\frac{\partial}{\partial x_i} (J_\phi (\phi^{-1}(x)))||\\&\leq 4 d||\frac{\partial}{\partial x_i} (J_\phi (\phi^{-1}(x)))||
\end{aligned}\]
so \begin{equation}\label{eq-logdet}
\begin{aligned}
     |\nabla_x \log det(J_\phi(\phi^{-1}(x)))|&\leq 4d \sqrt{\sum_{i=1}^d ||\frac{\partial}{\partial x_i} (J_\phi (\phi^{-1}(x)))||^2}\\&\leq 4d^\frac{3}{2} C(1+|\phi^{-1}(x)|^q)\\&\leq C' d^\frac{3}{2}(1+|x|^q).
\end{aligned}
\end{equation}
Combining \eqref{eq-logdet} and \eqref{eq-21} yields that there exists $q'$ such that \begin{equation}\label{eq-comb}
    |\nabla \log p(x)|\leq C(1+|x|^{q'}).
\end{equation}
Since $\hat{\pi}$ is a convolution of $p$ with the Gaussian density one deduces the following bound:
\begin{equation}\label{eq-conv1}
    \begin{aligned}
\left|\nabla \log (p*\mu)(x)\right|=\left|\nabla_x \log \int p(x-y)\mu(y)dy\right|&\leq \left| \frac{\int \nabla p(x-y)\mu(y)dy}{\int \mu(x-y) p(y) dy}\right|
\\&\leq \frac{\int |\nabla \log p(x-y)| p(x-y) \mu(y) dy}{\int p(x-y) \mu(y) dy}
\\&=\frac{\int |\nabla \log p(y)| p(y) \mu(x-y)dy}{\int p(x-y) \mu(y) dy}
\\&\leq C\frac{\int (1+|y|^{q'})p(y)\mu(x-y)dy}{\int p(y)\mu(x-y)dy}
\end{aligned}
\end{equation}
Let $R>0$ such that $\int_{B(0,R)} p(x) dx\geq \frac{1}{5}.$ Then,
\begin{equation}\label{eq-conv2}
    \begin{aligned}
    \int_{B(0,R)}p(y)\mu(x-y)dy& \geq C_1 e^{-c_1|x|^2} \int_{B(0,R)}p(y) e^{-c_1|y|^2}dy\\&\geq C_1 e^{-c_1|x|^2} \int_{B(0,R)}p(y)e^{-c_1|R|^2}dy\\&\geq C_1'(e^{-c_1|x|^2+c_1R^2}).
\end{aligned}
\end{equation}
In addition is easy to see that due to the decaying tails of $p$ and $\mu$, there exists $K$ such that for $y \in B(0,K(R+|x|))^c $ there holds $\mu(x-y)\leq e^{-cK(R+|x|)^2}\leq e^{-c_1|x|^2-c_1R^2}$ so
\begin{equation}\label{eq-conv3}
\begin{aligned}
    \int_{B(0,K(R+|x|))^c}(1+|y|^{q'}) p(y)\mu(x-y)&\leq e^{-c_1|x|^2-c_1R^2}\int_{B(0,K(R+|x|))^c}(1+|y|^{q'}) p(y)dy\\&\leq C''_1e^{-c_1|x|^2-c_1R^2}.
    \end{aligned}
\end{equation}
Combining \eqref{eq-conv1} \eqref{eq-conv2}, \eqref{eq-conv3} yields
\begin{equation}\label{eq-conv4}
\begin{aligned}
|\nabla  \log \hat{\pi}_t(x)|&=|\nabla \log (p*\mu)|\\&\leq\frac{\int(1+|y|^{q'})p(y)\mu(x-y)dy}{\int p(y) \mu(x-y)dy}\\&\leq\frac{\int_{B(0,K(R+|x|))}(1+|y|^{q'}) p(y)\mu(x-y)dy}{\int p(y) \mu(x-y)dy} \\&+\frac{\int_{B(0,K(R+|x|))^c}(1+|y|^{q'}) p(y)\mu(x-y)dy}{\int_{B(0,R)} p(y) \mu(x-y)dy}\\&\leq (1+(K(R+|x|)^q) +\frac{C''_1}{C_1}.
\end{aligned}
\end{equation}
As a result, $|\nabla \log \hat{\pi}_t|$ has polynomial growth for every $t\in [k\lambda,(k+1)\lambda].$
\end{proof}
\newtheorem{nabla2}[Def1]{Lemma}
\begin{lemma}\label{gradlog2 growth}
Let Assumptions
\aref{ass-derivbound}, \aref{ass-initial cond} hold.
Then, 
there exist $C$,$q''$>0 independent of $x$, uniform in small neighbourhood of $t$, such that \[
||\nabla^2 \log \hat{\pi}_t(x)||\leq C(1+|x|^{q''}) \quad \forall x\in \mathbb{R}^d.\]
\end{lemma}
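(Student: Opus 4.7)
The plan is to mirror the inductive strategy used in Lemma \ref{gradlog1 growth}, now pushing one derivative further. Assuming inductively that $\|\nabla^2 \log \pi_{k\lambda}\|$ has polynomial growth (which holds at $k=0$ by Assumption \aref{ass-initial cond}), the goal is to transfer this to $\hat{\pi}_t$ for $t\in[k\lambda,(k+1)\lambda]$ by treating $\hat{\pi}_t=p*\mu$, where $p$ is the density of $Z=\phi(\theta_{k\lambda})$ and $\mu$ the Gaussian density $\mathcal{N}(0,t-k\lambda)$, exactly as in Lemma \ref{pt-decay} and Lemma \ref{gradlog1 growth}.

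First I would bound $\|\nabla^2 \log p\|$. Starting from the change-of-variables identity
\[
\log p(x)=\log \pi_{k\lambda}(\phi^{-1}(x))-\log\det J_\phi(\phi^{-1}(x)),
\]
I apply $\nabla^2$ to each term. For the first term, the chain rule gives
\[
\nabla^2[\log\pi_{k\lambda}(\phi^{-1}(x))]= J_{\phi^{-1}}(x)^{\top}\, \nabla^2\log\pi_{k\lambda}(\phi^{-1}(x))\, J_{\phi^{-1}}(x)+\sum_{i=1}^d \partial_i\log\pi_{k\lambda}(\phi^{-1}(x))\,\nabla^2[\phi^{-1}]_i(x),
\]
which has polynomial growth because $J_{\phi^{-1}}$ is bounded (shown in Lemma \ref{pt-decay}), $\|\nabla^2\phi^{-1}\|$ has polynomial growth (it equals a polynomial expression in $J_\phi^{-1}$ and $J^{(2)}_\phi$ at $\phi^{-1}(x)$, and $J^{(2)}_\phi$ is polynomially bounded by the preceding lemma), $\phi^{-1}$ is $2$-Lipschitz so $|\phi^{-1}(x)|\leq 2|x|+|\phi^{-1}(0)|$, and $|\nabla\log\pi_{k\lambda}|$, $\|\nabla^2\log\pi_{k\lambda}\|$ have polynomial growth by Lemma \ref{gradlog1 growth} and the inductive hypothesis. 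For the second term, using $\partial_i\log\det J_\phi=\operatorname{tr}(J_\phi^{-1}\partial_i J_\phi)$ and differentiating once more produces an expression involving $J_\phi^{-1}$, $J^{(2)}_\phi$ and $J^{(3)}_\phi$ evaluated at $\phi^{-1}(x)$, all of polynomial growth, yielding $\|\nabla^2\log\det J_\phi(\phi^{-1}(\cdot))\|\leq C(1+|x|^{q_1})$.

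Next I would transfer the bound from $p$ to $\hat{\pi}_t=p*\mu$ via the identity
\[
\nabla^2\log(p*\mu)(x)=\frac{\nabla^2(p*\mu)(x)}{(p*\mu)(x)}-\nabla\log(p*\mu)(x)\otimes\nabla\log(p*\mu)(x).
\]
Writing $\nabla^2 p=(\nabla^2\log p+\nabla\log p\otimes\nabla\log p)\,p$ and integrating against $\mu$, the first term equals the conditional expectation
\[
\mathbb{E}\!\left[\nabla^2\log p(Y)+\nabla\log p(Y)\otimes\nabla\log p(Y)\mid Y+\mathcal{N}=x\right],
\]
whose magnitude is dominated by a ratio of the form encountered in \eqref{eq-conv1}. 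Splitting the integration domain into $B(0,K(R+|x|))$ and its complement exactly as in \eqref{eq-conv2}--\eqref{eq-conv4}, and using the polynomial bound on $\|\nabla^2\log p\|+|\nabla\log p|^2$ derived above together with the exponential decay of $p$ and $\mu$ (Lemma \ref{pt-decay}), one concludes that the first term has polynomial growth in $x$. The second term is already polynomial by Lemma \ref{gradlog1 growth}, so $\|\nabla^2\log\hat{\pi}_t\|\leq C(1+|x|^{q''})$, closing the induction.

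The main obstacle I anticipate is the bookkeeping in the $\nabla^2\log\det$ term, where one must verify that the third-order Jacobian $J^{(3)}_\phi$ appears only through polynomially-growing expressions; this in turn requires Assumption \aref{ass-derivbound} with $i=3$ and explains why the hypothesis is needed up to that order. The convolution step is then routine: the Gaussian mollification cannot destroy polynomial growth of derivatives of the log-density because exponential tails of both $p$ and $\mu$ keep the relevant integrals comparable to their values on a ball of radius $\mathcal{O}(|x|)$, as already exploited in Lemma \ref{gradlog1 growth}.
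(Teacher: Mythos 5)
Your proposal is correct and follows essentially the same inductive, change-of-variables-plus-convolution strategy as the paper's proof. The only cosmetic deviation is in bounding $\nabla^2(p*\mu)$: the paper integrates by parts once to land one derivative on $p$ and one on $\mu$ (producing $\nabla\log p(y)\,\nabla\log\mu(x-y)^\top$), whereas you keep both derivatives on $p$ via $\nabla^2 p=(\nabla^2\log p+\nabla\log p\otimes\nabla\log p)p$; both lead to the same polynomial-growth conclusion by the ball-splitting argument from Lemma \ref{gradlog1 growth}.
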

\begin{proof}
As in the previous lemmas the proof will be inductive. Since $|\nabla^2 \log \hat{\pi}_0|$ the base assumption is satisfied and we assume that there exists $C,k'$ such \begin{equation}\label{eq-assnab2}
    \|\nabla^2 \log \hat{\pi}_{k\lambda}(x)\|\leq C(1+|x|^{k'}).
\end{equation}
Taking derivatives in \eqref{eq-decop1} one obtains
\begin{equation}\label{eq-basenab2}
    \begin{aligned}
    \nabla^2\log p(z)= \nabla^2_z\log \hat{\pi}_{k\lambda}(\phi^{-1}(z))-\nabla^2_z \log det\left(J_\phi)(\phi^{-1}(z))\right).
    \end{aligned}
\end{equation}
To bound $
    |\nabla^2_z\log \hat{\pi}_{k\lambda}(\phi^{-1}(z))|$ one notices that
    \[\begin{aligned}
        \frac{\partial}{\partial z_i}\nabla\log \hat{\pi}_{k\lambda}(\phi^{-1}(z))&= \frac{\partial}{\partial z_i}(J_{\phi^{-1}}(z))\nabla (\log \hat{\pi}_{k\lambda}) (\fin) \\&+J_{\phi^{-1}}(z) \nabla^2(\log \hat{\pi}_{k\lambda})(\fin)  \frac{\partial}{\partial z_i}\phi^{-1}(z).
    \end{aligned} \]
    From the previous lemmas there holds
    \begin{equation}\label{eq-nabla21}
        |\nabla \log\hat{\pi}_{k\lambda}(\phi^{-1}(z))|\leq C (1+|\fin|^k)\leq C'(1+|z|^k)
    \end{equation}
    and by induction hypothesis
    \begin{equation}\label{eq-nabla22}
        ||\nabla^2\log \hat{\pi}_{k\lambda}(\fin)||\leq C(1+|\fin|^{k'})\leq C(1+|z|^{k'}).
    \end{equation}
    In addition, using the formula for the derivative of inverse i.e $\partial (A^{-1})=-A^{-1} \partial A A^{-1}$, writing \[\begin{aligned}
        \frac{\partial}{\partial z_i} (J_{\phi^{-1}})(z)&= \frac{\partial}{\partial z_i} (J_\phi)^{-1}(\fin)\\&=-(J_\phi)^{-1}(\fin) \frac{\partial}{\partial z_i} \left(J_\phi(\fin)\right) (J_\phi)^{-1}(\fin)
    \end{aligned}\] 
    which, since $J^{(2)}_\phi$ has polynomial growth, $(J_{\phi})^{-1}$ and $J_{\phi^{-1}}$ is bounded
    yields
    \begin{equation}\label{eq-nabla23}
     ||\frac{\partial}{\partial z_i}J_{\phi^{-1}}(z)||\leq C(1+|z|^{P})
    \end{equation}
    for some $C>0$, $P\in \mathbb{N}.$
    Combining \eqref{eq-nabla21}, \eqref{eq-nabla22} and \eqref{eq-nabla23} one concludes that
    \begin{equation}\label{eq-nabla24}
        ||\nabla^2_z\log \hat{\pi}_{k\lambda}(\phi^{-1}(z))||\leq C(1+|z|^r).
    \end{equation}
    In order to bound the term $\nabla^2_z\log det (J_{\phi} (\fin))$, recall that
    \begin{equation}
    \begin{aligned}
         \frac{\partial}{\partial z_i} \log det (J_{\phi} (\fin))&=tr\left(\frac{\partial}{\partial z_i} (J_{\phi}(\fin)) (J_{\phi}(\fin))^{-1}\right)\\&=tr\left(\frac{\partial}{\partial z_i} (J_{\phi}(\fin)) J_{\phi^{-1}}(z)\right)
    \end{aligned}
    \end{equation}
    so taking derivatives one obtains
    \begin{equation}
    \begin{aligned}
\left|     \frac{\partial}{\partial z_i\partial z_j}\log det (J_{\phi} (\fin))\right|&\leq\left|tr\left( \frac{\partial}{\partial z_i\partial z_j}(J_{\phi}(\fin)) J_{\phi^{-1}}(z)\right)\right| \\&+\left|tr\left( \frac{\partial}{\partial z_i} (J_{\phi}(\fin)) \frac{\partial}{\partial z_j}J_{\phi^{-1}}(z) \right) \right|
    \end{aligned}
    \end{equation}
    Since this expression contains first,second,third derivatives of $\phi$, first and second derivative of $\phi^{-1}$ which all have polynomial growth there follows that $||\nabla^2_z\log det (J_{\phi} (\fin))||_F$ has polynomial growth. As a result, $||\nabla^2_z\log det (J_{\phi} (\fin))||$ has also polynomial growth.
    Combining this along with \eqref{eq-nabla24} yields
    \begin{equation}
        ||\nabla^2\log p(z)||\leq C(1+|z|^{m}).
    \end{equation}
    For the convolution with the Gaussian,
    \begin{equation}\label{eq-nabla2conv}
    \begin{aligned}
        ||\nabla^2(\log (p*\mu))||&\leq  ||\frac{\nabla^2(p*\mu)}{p*\mu}|| +\frac{(\nabla p*\mu)(\nabla p*\mu)^T}{(p*\mu)^2}  ||\\&\leq ||\frac{\nabla^2(p*\mu)}{p*\mu}|| + \left(\frac{|\nabla p*\mu|}{(p*\mu)} \right) ^2.
    \end{aligned}
    \end{equation}
    Noticing that $\frac{\nabla p*\mu}{p*\mu}=\nabla \log (p*\mu)=\nabla \log \hat{\pi}_t$ has polynomial growth it remains to bound the first term. Writing
    \[\begin{aligned}
        ||\frac{\nabla^2(p*\mu)}{p*\mu}||&=\left \|\frac{\int \nabla \log p(y)\nabla \log \mu(x-y)p(y)\mu(x-y)dy}{\int p(y)\mu(x-y)dy }\right\|\\&\leq C\frac{\int(1+|x|^q+|y|^{q})p(y)\mu(x-y)dy}{\int p(y)\mu(x-y)dy }.
    \end{aligned}\]
    Using the same arguments as in the previous lemma there easily follows that $\frac{\nabla^2(p*\mu)}{p*\mu}$ has polynomial growth.
    Applying this to \eqref{eq-nabla2conv} one concludes that for every $t\in [k\lambda,(k+1)\lambda]$, $||\nabla^2 \log \hat{\pi}_t||$ has polynomial growth which concludes the inductive proof.
    \end{proof}
    \subsection{Auxiliary Lemmas to prove LSI under convexity at infinity condition}
    We provide a definition that extends the notion of convexity to non-convex sets.
    \begin{Def}\label{def-convalt}
        Let $\Omega\subset \mathbb{R}^d$ a non-convex set and $f:\Omega\rightarrow \mathbb{R}$. 
        $f$ is convex if \[f(x) \leq \lambda_1 f\left(x_1\right)+\cdots+\lambda_k f\left(x_k\right)\]
        holds whenever $x_1, \ldots, x_k \in \Omega$ and their convex combination $x=\lambda_1 x_1+\cdots+\lambda_k x_k \in \Omega$.
    \end{Def}
    \begin{lemma}[\cite{yan2012extension}, Proposition 2.1]\label{straight}
    Let $\Omega\subset \mathbb{R}^d$ open and $\tu:\mathbb{R}^d\rightarrow\mathbb{R}$ satisfying 
    \begin{equation}\label{eq-strline}
         \lambda \tu(x) +(1-\lambda) \tu(y)\geq \tu\left(\lambda x +(1-\lambda) y\right) \quad \forall x,y,\lambda x +(1-\lambda) y \in \Omega, \quad \forall 0\leq\lambda\leq 1.
    \end{equation}
    Then, $\tu$ is convex on $\Omega.$
\end{lemma}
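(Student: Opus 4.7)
The plan is to prove the statement by induction on the number of points $k$ appearing in the convex combination in Definition \ref{def-convalt}. The case $k=1$ is trivial and the case $k=2$ is exactly the hypothesis \eqref{eq-strline}, so the bulk of the work is the inductive step $k\mapsto k+1$.

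For the inductive step, take $x_1,\dots,x_{k+1}\in\Omega$ together with weights $\lambda_i>0$ summing to $1$ and assume $x:=\sum_i\lambda_i x_i\in\Omega$. For every index $j$ set
\[y_j:=\sum_{i\neq j}\tfrac{\lambda_i}{1-\lambda_j}\,x_i,\qquad\text{so that}\qquad x=(1-\lambda_j)\,y_j+\lambda_j\,x_j.\]
If some $y_j$ happens to lie in $\Omega$, the inductive hypothesis applied to the $k$-point representation of $y_j$ gives $\tu(y_j)\leq\sum_{i\neq j}\tfrac{\lambda_i}{1-\lambda_j}\tu(x_i)$, and the two-point inequality \eqref{eq-strline} applied to the pair $(y_j,x_j)$ (whose convex combination $x$ lies in $\Omega$) gives $\tu(x)\leq (1-\lambda_j)\tu(y_j)+\lambda_j\tu(x_j)$. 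Combining the two estimates yields $\tu(x)\leq\sum_i\lambda_i\tu(x_i)$, closing the induction in this tractable case.

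The main obstacle is the remaining case in which every auxiliary point $y_j$ lies outside $\Omega$. I would tackle it by exploiting the openness of $\Omega$: since $x\in\Omega$, a ball $B(x,\varepsilon)\subset\Omega$ is available, and the continuous map $\mu\mapsto\sum_i\mu_i x_i$ on the simplex $\Delta_{k+1}$ carries a relatively open neighbourhood of $\lambda$ into $\Omega$. Within this neighbourhood, a careful perturbation of the weights can be engineered so that at least one $y_j(\mu)$ moves into $\Omega$; combining this perturbation with a Carath\'eodory-type reduction (when $k+1>d+1$ the $x_i$ are affinely dependent and one may rewrite $x$ as a convex combination of at most $d+1$ of the same points, lowering $k$ and invoking the inductive hypothesis) and passing to the limit via the two-point inequality \eqref{eq-strline} recovers the desired bound for the original weights. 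The delicate technical point is orchestrating the perturbation compatibly with \eqref{eq-strline} and controlling the oscillation of $\tu$ along it so that the limit yields the sharp inequality; this is the content of Proposition 2.1 of \cite{yan2012extension}, to which the detailed execution can be deferred.
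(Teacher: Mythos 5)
Your inductive strategy runs into a genuine gap at exactly the point you flag: the case where every ``remove-one-point'' auxiliary $y_j=\sum_{i\neq j}\tfrac{\lambda_i}{1-\lambda_j}x_i$ lies outside $\Omega$ is not resolved, and the two fixes you sketch do not close it. The Carath\'eodory reduction only applies when $k+1>d+1$ and reduces to at most $d+1$ points; it gives no traction for small $k$, which is precisely where the induction must already work. The perturbation idea is also suspect: for a non-convex $\Omega$ such as the exterior of a ball, the map from weights to the $y_j$'s is rigid enough that small weight perturbations need not push any $y_j$ into $\Omega$, and even if one did, controlling the limit would require some a priori regularity of $\tu$ that you have not established. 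Deferring ``the detailed execution'' back to \cite{yan2012extension} means the argument is not self-contained.

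The paper sidesteps the induction entirely with a different and much cleaner decomposition. Rather than removing one point, it \emph{shrinks all points towards $x$}: since $\Omega$ is open there is a ball $B(x,\varepsilon_0)\subset\Omega$, and setting $y_i=\delta x_i+(1-\delta)x$ with $\delta=\varepsilon_0/\max_i|x-x_i|$ forces every $y_i$ into $B(x,\varepsilon_0)$. The two-point inequality \eqref{eq-strline} applied to the segment $[x,x_i]\subset\Omega$ gives $\tu(y_i)\leq\delta\tu(x_i)+(1-\delta)\tu(x)$, while on the convex ball $B(x,\varepsilon_0)$ the two-point inequality \emph{is} full convexity, so Jensen gives $\tu(x)=\tu\bigl(\sum_i\lambda_i y_i\bigr)\leq\sum_i\lambda_i\tu(y_i)$. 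Combining and cancelling the $(1-\delta)\tu(x)$ term gives the result in one stroke, with no case split and no limiting argument. If you want to salvage an inductive proof you would need a concrete mechanism to handle the bad case; as written, the proposal proves only the easy branch.
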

\begin{proof}
    Let $x,x_i \in \Omega$ such that $x_i\neq x$ and \[x:=\sum_{i=1}^n l_i x_i\] where $l_i>0$ and $\sum_{i=1}^n l_i=1.$
    There exist $\epsilon_0>0$ such that \[B(x,\epsilon_0)\subseteq \Omega.\]
    Let $\delta:=\frac{\epsilon_0}{\max_i\{|x-x_i|\}}.$
Setting $y_i:=\delta x_i+(1-\delta)x$ one observes that \[|y_i-x|\leq \delta |x_i-x|\leq \epsilon_0\]
which implies that $y_i\in B(x,\epsilon_0), \quad \forall i=1,2,...n, $ and as a result,
\[y_i\in \Omega \quad \forall i=1,2,...n.\]
Since $x,x_i,y_i \in \Omega$ by \eqref{eq-strline} it is implied that
\begin{equation}\label{eq-baseyi}
    \tu(y_i)\leq \delta \tu(x_i)+(1-\delta)\tu(x).
\end{equation}
Let $x,y \in B(0,\epsilon_0).$ Since $B(0,\epsilon_0)$ is convex, for any $0\leq \lambda \leq 1$, \[(\lambda x +(1-\lambda)y)\in B(0,\epsilon_0),\] so since $B(0,\epsilon_0)\subset \Omega$ one deduces that
\begin{equation}\label{eq-convfortwo}
    \tu(\lambda x+(1-\lambda)y)\leq \lambda \tu (x) +(1-\lambda)\tu(y) \quad \forall x,y \in B(0,\epsilon_0), 0\leq\lambda\leq 1.
\end{equation}
Since $B(0,\epsilon_0)$ is convex, \eqref{eq-convfortwo} implies that $\tu$ is convex on $B(0,\epsilon_0)$.
As a result, since $y_i \in B(0,\epsilon_0)$ , $x\in B(0,\epsilon_0)$ and \[\sum_{i=1}^n l_iy_i = \sum_{i=1}^n l_i \delta x_i + \sum_{i=1}^n l_i (1-\delta)x =x,\] one notices that
\[\begin{aligned}
    \tu(x)&\leq \sum_{i=1}^n l_i \tu(y_i)
    \\&\leq \sum_{i=1}^n \left[l_i\delta \tu(x_i) +(1-\delta)l_i\tu(x)\right]
    \\&=\delta\sum_{i=1}^n l_i \tu(x_i) +(1-\delta) \tu(x)
\end{aligned}\]
where the second step was given by \eqref{eq-baseyi}.
Rearranging, there follows that 
\[\tu(x)\leq \sum_{i=1}^n l_i \tu(x_i),\]
which completes the proof.
\end{proof}
       \begin{lemma}\label{flammanionconnection}
    Suppose $u:\mathbb{R}^d\rightarrow \mathbb{R},\quad u\in \mathcal{C}^2$ satisfying \[\langle \nabla u(x)-\nabla u(y),x-y\rangle \geq \left(c_1(|x|^{2r}+|y|^{2r})-c_2(|x|^l+|y|^l)-c_3\right)|x-y|^2 \quad \forall x,y \in \mathbb{R}^d\] for some $c_1,c_2,c_3>0$ and $2r>l>0$. Then, for any $m>0$, $u$ is $m$-strongly convex in the sense of Definition \ref{def-convalt} on $\bar{B}(0,R_m)^c,$ 
    where 
    \begin{equation}\label{eq-radiusconv}
        R_m=\left(\frac{2}{c_1}\left(\left(\frac{c_2}{2c_1}\right)^{\frac{1}{2r-l}}+c_3+m\right)\right)^{\frac{1}{2r}}.
    \end{equation}
.
       \end{lemma}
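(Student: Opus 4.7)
The strategy is to reduce the problem to the standard route for strong convexity: show strong monotonicity of $\nabla u$ outside the ball of radius $R_m$, integrate along admissible line segments to obtain convexity of the segment restriction, and then upgrade from line-segment convexity to the convex-combination form of Definition \ref{def-convalt} by invoking Lemma \ref{straight}.

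\textbf{Step 1 (Pointwise strong monotonicity).} I would first show that $R_m$ is chosen precisely so that, for all $x,y$ with $|x|,|y|\ge R_m$,
\[
c_1(|x|^{2r}+|y|^{2r})-c_2(|x|^l+|y|^l)-c_3\ \ge\ m.
\]
Since $2r>l$, Young's inequality yields a bound of the form $c_2 t^l\le \tfrac{c_1}{2}t^{2r}+K$ with $K$ explicit; substituting and using $|x|^{2r}+|y|^{2r}\ge 2R_m^{2r}$ reduces the claim to a one-line inequality that the definition of $R_m$ in \eqref{eq-radiusconv} is tailored to satisfy. Combining this lower bound with the assumption of the theorem yields the strong-monotonicity estimate
\[
\langle\nabla u(x)-\nabla u(y),\,x-y\rangle\ \ge\ m\,|x-y|^2\qquad\forall\,|x|,|y|\ge R_m.
\]

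\textbf{Step 2 (Segment convexity and application of Lemma \ref{straight}).} Set $\tilde u(x):=u(x)-\tfrac{m}{2}|x|^2$, so that the estimate above becomes monotonicity of $\nabla\tilde u$ on $\bar{B}(0,R_m)^c$. If $x,y\in\bar{B}(0,R_m)^c$ and the segment $[x,y]$ lies entirely in $\bar{B}(0,R_m)^c$, the function $\varphi(\lambda):=\tilde u(\lambda x+(1-\lambda)y)$ has $\varphi'(\lambda)=\langle\nabla\tilde u(\lambda x+(1-\lambda)y),\,x-y\rangle$. Monotonicity of $\nabla\tilde u$ along the segment makes $\varphi'$ non-decreasing, so $\varphi$ is convex on $[0,1]$; this is exactly hypothesis \eqref{eq-strline} of Lemma \ref{straight} with $\Omega=\bar{B}(0,R_m)^c$, which is open as the complement of a closed ball. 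Applying Lemma \ref{straight} promotes this segment convexity of $\tilde u$ to the Jensen inequality in Definition \ref{def-convalt} for arbitrary convex combinations with vertices and barycentre lying in $\bar{B}(0,R_m)^c$, which is precisely the statement that $u$ is $m$-strongly convex there.

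\textbf{Main obstacle.} The only delicate point is the bookkeeping in Step 1: a naive Young estimate produces an implicit threshold that must then be matched with the explicit expression for $R_m$ displayed in \eqref{eq-radiusconv}. One has to split $c_2 t^l$ so that half of $c_1 t^{2r}$ dominates it, ensuring simultaneously that $c_1 R_m^{2r}/2$ absorbs $c_3+m$ and that the Young residual matches the $\left(c_2/(2c_1)\right)^{1/(2r-l)}$ term in the stated $R_m$; the remainder of the argument is purely geometric, leaning on Lemma \ref{straight} and the elementary identity that $\varphi$ is convex iff $\varphi'$ is non-decreasing.
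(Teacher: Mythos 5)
Your Step~1 is in the right spirit (one Young-type split of $c_2 t^l$ into $\tfrac{c_1}{2}t^{2r}$ plus an explicit constant absorbed into $R_m$), but Step~2 has a genuine gap. You verify
\[
\lambda\tilde u(x)+(1-\lambda)\tilde u(y)\ge\tilde u(\lambda x+(1-\lambda)y)
\]
only under the extra hypothesis that \emph{the whole segment} $[x,y]$ lies in $\bar B(0,R_m)^c$, because your argument differentiates $\varphi(\lambda)=\tilde u(\lambda x+(1-\lambda)y)$ along the chord and needs monotonicity of $\nabla\tilde u$ at every intermediate point. But hypothesis \eqref{eq-strline} of Lemma~\ref{straight} is the strictly stronger requirement that the displayed inequality holds whenever merely $x$, $y$, and the single point $\lambda x+(1-\lambda)y$ lie in $\Omega$; the chord is free to dip inside the ball (e.g.\ two antipodal points far from the origin with a convex combination lying just outside the ball on one side). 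The proof of Lemma~\ref{straight} genuinely exercises this stronger form (it applies \eqref{eq-strline} to triples $(x_i,x,y_i)$ whose connecting segments need not stay in $\Omega$), so your segment-restricted convexity is not enough to invoke it, and without Lemma~\ref{straight} you do not get the Jensen-type inequality of Definition~\ref{def-convalt} on a non-convex set.

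The paper avoids this by never localising the monotonicity estimate. After subtracting $\tfrac m2|x|^2$ it keeps the \emph{global} inequality (valid for all $x,y\in\mathbb R^d$)
\[
\langle\nabla\tilde u(x)-\nabla\tilde u(y),x-y\rangle\ge\Bigl(\tfrac{c_1}{2}|x|^{2r}+\tfrac{c_1}{2}|y|^{2r}-L_0\Bigr)|x-y|^2,
\]
and then, for $x,y,z:=\lambda x+(1-\lambda)y$ all in $\bar B(0,R_m)^c$, writes $\lambda\bigl(\tilde u(x)-\tilde u(z)-\langle\nabla\tilde u(z),x-z\rangle\bigr)$ as an integral of $\langle\nabla\tilde u(tx+(1-t)z)-\nabla\tilde u(z),\,t(x-z)\rangle/t$ over $t\in[0,1]$, applies the global estimate pointwise, discards the nonnegative $|tx+(1-t)z|^{2r}$ term, and keeps only $\tfrac{c_1}{2}|z|^{2r}-L_0\ge0$, which needs $|z|\ge R_m$ but imposes nothing on the path. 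Adding the analogous estimate with $y$ in place of $x$ and using $\lambda(x-z)+(1-\lambda)(y-z)=0$ gives exactly \eqref{eq-strline}. To repair your proof, replace the localised strong-monotonicity claim and the segment-convexity step by this global bound plus the observation that only $z$ has to lie outside the ball.
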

      \begin{proof}
    Let $m>0$ $\tu:= u-\frac{m}{2}|\cdot|^2.$ It is easy to see that
    \[\langle \nabla \tu(x)-\nabla \tu(y),x-y\rangle\geq \left(c_1(|x|^{2r}+|y|^{2r})-c_2(|x|^l+|y|^l)-c_3-m\right)|x-y|^2 \quad \forall x,y \in \mathbb{R}^d.\]
    Setting $L_0:=\left(\frac{c_2}{2c_1}\right)^{\frac{1}{2r-l}}+c_3+m$ one deduces
    \begin{equation}\label{eq-sconv}
        \langle \nabla \tu(x)-\nabla \tu(y),x-y\rangle\geq \left(\frac{c_1}{2}|x|^{2r}+\frac{c_1}{2}|y|^{2r}-L_0\right)|x-y|^2\quad \forall x,y \in \mathbb{R}^d.
    \end{equation}
    Let $x$,$y\in \bar{B}(0,R)^c$ and $\lambda>0$ such that $z:=x+y \in \Bar{B}(0,R)^c.$
    Writing 
    \begin{equation}\label{eq-sconv2}
        \begin{aligned}
     \hspace{-20pt}   \lambda \tu(x)-\lambda \tu(z)-\lambda \langle \nabla \tu(z),x-z\rangle &= \lambda \int_0^1 \langle \nabla \tu(tx +(1-t)z)-\nabla \tu(z),x-z\rangle dt
        \\&= \lambda \int_0^1 \frac{1}{t}\langle \nabla \tu(tx +(1-t)z)-\nabla \tu(z),t(x-z)\rangle dt
        \\&\geq \lambda \int_0^1 \frac{1}{t}\left(\frac{c_1}{2}|tx+(1-t)z|^{2r}+\frac{c_1}{2}|z|^{2r}-L_0\right)t^2|x-z|^2dt
        \\&\geq \lambda \int_0^1 t(\frac{c_1}{2}|z|^{2r}-L_0)|x-z|^2 dt 
        \\&=\lambda(\frac{c_1}{2}|z|^{2r}-L_0)\frac{|x-z|^2}{2}
        \\&\geq 0
    \end{aligned}
    \end{equation}
    where the first inequality is derived by \eqref{eq-sconv} and the last step is given by the fact that $|z|\geq R.$
    Using the same arguments for $y$ in place of $x$ and $(1-\lambda)$ in place of $\lambda$ one deduces that 
    \begin{equation}\label{eq-sconv3}
        (1-\lambda)\tu(y)-(1-\lambda)\tu(z)-(1-\lambda) \langle \nabla \tu(z),x-z\rangle\geq 0.
    \end{equation}
    Adding \eqref{eq-sconv2} and \eqref{eq-sconv3} one obtains
    \[\begin{aligned}
    \lambda \tu(x) +(1-\lambda) \tu(y)-\tu\left(\lambda x +(1-\lambda) y\right)&=
        \lambda \tu(x)+(1-\lambda)\tu(y)-\tu(z)\\&\geq \langle \nabla \tu(z), \lambda x -\lambda z +(1-\lambda)y-(1-\lambda)z\rangle\\&= \langle \nabla \tu(z),\lambda x +(1-\lambda)y -z\rangle\\&=0
        \quad \forall x,y,\lambda x +(1-\lambda) y \in \bar{B}(0,R)^c.
    \end{aligned}\]
    Applying Lemma \ref{straight} yields the result.
\end{proof}
      \begin{lemma}\label{flammlemma4adap}
    Suppose $u:\mathbb{R}^d\rightarrow \mathbb{R},$ $ u\in \mathcal{C}^2$ satisfying \aref{ass-pol lip}.
           Then for $\Omega:=\bar{B}(0,R)^c,$ and $\tilde{u}=u(x)-\frac{m}{2}|x|^2,$
        \[  V(x)=\inf _{\substack{\left\{x_i\right\} \subset \Omega,\left\{\lambda_i \mid \sum_i \lambda_i=1\right\}, \\ \text { s.t., } \sum_i \lambda_i x_i=x}}\left\{\sum_{i=1}^l \lambda_i \tilde{u}\left(x_i\right)\right\}, \quad \forall x \in \mathbb{R}^d\]
        is a convex extension of $\tilde{u}$ such that $V$ is convex in $\mathbb{R}^d$ and 
        \[V(x)=\tu(x) \quad \forall x \in \Omega.\]
        In addition, there holds
        \begin{equation}\label{V-tu bound}
        \inf_{|x|=R} \tilde{u}(x)\leq V(x)\leq \sup_{|x|=R} \tu(x) \quad \forall x \in \bar{B}(0,R).
        \end{equation}
      \end{lemma}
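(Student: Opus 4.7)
The plan is to prove the three claims about $V$ in the order they are stated: convexity on all of $\mathbb{R}^d$, equality with $\tilde{u}$ on $\Omega$, and the two-sided sandwich bound on $\bar{B}(0,R)$. Throughout, I will use (implicitly, via Lemma \ref{flammanionconnection}) that $R$ is large enough so that $\tilde{u}=u-\frac{m}{2}|\cdot|^2$ is convex on $\Omega$ in the generalized sense of Definition \ref{def-convalt}, and coercive (in fact strongly convex at infinity). Without this structural input the statement would not be true, so I would begin by recording that $R \geq R_m$ in the notation of \eqref{eq-radiusconv} is the operative regime.

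For convexity of $V$ on $\mathbb{R}^d$, I would use the standard convex-envelope argument. Given $x=\mu y+(1-\mu)z$ and $\varepsilon>0$, pick representations $y=\sum_i \lambda_i x_i$ and $z=\sum_j \tau_j y_j$ with $x_i,y_j\in\Omega$ achieving $V(y)+\varepsilon$ and $V(z)+\varepsilon$ respectively. Then $x=\sum_i(\mu\lambda_i)x_i+\sum_j((1-\mu)\tau_j)y_j$ is a legitimate competitor in the infimum defining $V(x)$, yielding $V(x)\leq \mu V(y)+(1-\mu)V(z)+\varepsilon$. Letting $\varepsilon\to 0$ gives convexity. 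For the identity $V(x)=\tilde{u}(x)$ when $x\in\Omega$, the upper bound is immediate by taking the trivial representation $l=1$, $\lambda_1=1$, $x_1=x$. For the lower bound, fix any representation $x=\sum_i\lambda_i x_i$ with $x_i\in\Omega$ and $x\in\Omega$. By Lemma \ref{flammanionconnection} $\tilde{u}$ satisfies the hypotheses of Definition \ref{def-convalt} on $\Omega$, so directly $\tilde{u}(x)\leq \sum_i \lambda_i\tilde{u}(x_i)$. Taking the infimum gives $\tilde{u}(x)\leq V(x)$, hence equality.

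For the bound \eqref{V-tu bound} on $\bar{B}(0,R)$, I would treat the two sides separately. The upper bound is constructive: for any $x\in\bar{B}(0,R)$ fix a unit vector $u$ (e.g.\ $u=x/|x|$ if $x\neq 0$, arbitrary otherwise) and write $x=\lambda(Ru)+(1-\lambda)(-Ru)$ with $\lambda=(1+\langle x,u\rangle/R)/2\in[0,1]$. Since $\pm Ru\in\partial B(0,R)\subset\bar\Omega$, an approximation from inside $\Omega$ (or simply noting that $\tilde u$ extends continuously to $\partial\Omega$, so we may take representations with $x_i\in\Omega$ converging to the two boundary points) gives $V(x)\leq \lambda\tilde u(Ru)+(1-\lambda)\tilde u(-Ru)\leq \sup_{|y|=R}\tilde u(y)$. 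The lower bound is where the main work lies: I need $\sum_i\lambda_i\tilde u(x_i)\geq \inf_{|y|=R}\tilde u(y)$ for every competitor. It suffices to show $\tilde u(x_i)\geq\inf_{|y|=R}\tilde u(y)$ for every $x_i\in\Omega$, i.e.\ that the infimum of $\tilde u$ on $\bar\Omega$ is attained on $\partial B(0,R)$.

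This last point is the main obstacle, and it is where the hidden hypothesis on $R$ (coming from Lemma \ref{flammanionconnection} with $R\geq R_m$) is indispensable. The argument I would run is: $\tilde u$ is strongly convex at infinity, hence coercive, so it attains its minimum on the closed set $\bar\Omega$ at some $z^\ast$. If $z^\ast$ lay in the open set $\Omega$ we would have $\nabla\tilde u(z^\ast)=0$; but by the strong-convexity estimate in \eqref{eq-sconv} (with $y=z^\ast$, $x$ a nearby point), a critical point in $\Omega$ is incompatible with the positivity of the convexity modulus for $|z^\ast|\geq R_m$. Hence $z^\ast\in\partial B(0,R)$, so $\tilde u(x_i)\geq \tilde u(z^\ast)=\inf_{|y|=R}\tilde u(y)$ for every $x_i\in\Omega$. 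Averaging with weights $\lambda_i$ and taking the infimum over competitors finishes the lower bound and completes the proof.
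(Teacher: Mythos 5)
Your handling of the convexity of $V$, the identity $V=\tilde{u}$ on $\Omega$, and the upper half of \eqref{V-tu bound} is correct, and you are right to flag that the lemma as stated is incomplete without a convexity hypothesis on $\tilde{u}$ over $\Omega$, which in the paper's application is supplied by Lemma \ref{flammanionconnection}. The paper itself gives no proof of this statement, simply citing \citet{ma2019sampling}, Lemma 4, so there is no authored argument to compare against directly.

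The lower bound is where there is a genuine gap. You reduce it to showing that the minimizer $z^{\ast}$ of $\tilde{u}$ over $\bar{\Omega}$ lies on $\partial B(0,R)$, and you try to exclude $z^{\ast}\in\Omega$ by invoking \eqref{eq-sconv} with $y=z^{\ast}$ and $x$ nearby. But at a stationary point that inequality reads $\langle\nabla\tilde{u}(x),x-z^{\ast}\rangle>0$ for all nearby $x\neq z^{\ast}$, which is exactly what one expects at a strict minimum of a strongly convex function; it produces no contradiction. A positive convexity modulus does not preclude a critical point in $\Omega$ — it merely forces such a point to be the unique global minimizer. To actually rule out a stationary point of $\tilde{u}=u-\frac{m}{2}|\cdot|^2$ at $|z^{\ast}|>R$ one would have to use \eqref{eq-convinf} against a fixed reference point (say $y=0$), which brings $|\nabla u(0)|$ into the estimate, and the resulting threshold radius is not $R_m$ in general. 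So this route is both incomplete as written and is not justified at the radius you claim suffices.

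Fortunately the lower bound needs none of this. Set $m_R:=\inf_{|y|=R}\tilde{u}(y)$. Since $\tilde{u}$ is bounded below on $\Omega$ (imported coercivity), $V$ is a finite convex function, hence continuous, and $V=\tilde{u}$ on the open set $\Omega$ extends by continuity to $V=\tilde{u}\geq m_R$ on $\partial B(0,R)$. Suppose $V(x_0)<m_R$ for some $x_0\in\bar{B}(0,R)$; then $x_0$ lies in the open ball. If in addition some $z\in\Omega$ had $\tilde{u}(z)=V(z)<m_R$, the convex sublevel set $\{V<m_R\}$ would contain the whole segment $[x_0,z]$, which crosses $\partial B(0,R)$ where $V\geq m_R$ — a contradiction. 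Hence $\tilde{u}\geq m_R$ on $\Omega$, so every competitor $\sum_i\lambda_i\tilde{u}(x_i)$ in the infimum defining $V(x_0)$ is at least $m_R$, contradicting $V(x_0)<m_R$. This closes the lower bound using only the convexity of $V$ and the extension property already established, with no critical-point analysis and no quantitative growth thresholds.
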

      \begin{proof}
        See  \citet{ma2019sampling}, Lemma 4.
      \end{proof}
  \begin{lemma}\label{lemma-reg}
    Let $g:\mathbb{R}^d\rightarrow\mathbb{R}$ such that \[|\nabla g(x)-\nabla g(y)|\leq L (1+|x|^l+|y|^l)|x-y|\quad \forall x,y\in \mathbb{R}^d.\] Let $r>\frac{l}{2}$ and $\eta>0$. Then the function $u$ given by $u(x):=g(x)+\eta |x|^{2r+2}$ satisfies
    \[\langle \nabla u(x)-\nabla u(y),x-y\rangle \geq \left(c_1(|x|^{2r} +|y|^{2r})-c_2(|x|^l+|y|^l)-c_3)\right |x-y|^2 \quad \forall x,y\in \mathbb{R}^d.\]
    where $c_1:=\eta (r+1)$,$c_2=c_3=L$.
    \end{lemma}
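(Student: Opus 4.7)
The plan is to split $\nabla u(x)-\nabla u(y)=[\nabla g(x)-\nabla g(y)]+\eta[\nabla|x|^{2r+2}-\nabla|y|^{2r+2}]$ inside the inner product and estimate each piece. The $g$-contribution is handled trivially by Cauchy--Schwarz and the hypothesis on $\nabla g$, producing exactly the negative part of the desired bound. The entire content of the lemma therefore lies in extracting the positive convex-at-infinity term $c_1(|x|^{2r}+|y|^{2r})|x-y|^2$ from the regularizer $\eta|x|^{2r+2}$.

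For the $g$-part, Cauchy--Schwarz together with the polynomial-Lipschitz hypothesis gives
\[\langle \nabla g(x)-\nabla g(y),x-y\rangle \geq -|\nabla g(x)-\nabla g(y)|\,|x-y| \geq -L\bigl(1+|x|^l+|y|^l\bigr)|x-y|^2,\]
which is precisely $-\bigl(c_2(|x|^l+|y|^l)+c_3\bigr)|x-y|^2$ with $c_2=c_3=L$. For the regularizer, since $\nabla|x|^{2r+2}=(2r+2)|x|^{2r}x$, the claim reduces to the pointwise algebraic inequality
\[2\bigl\langle |x|^{2r}x-|y|^{2r}y,\,x-y\bigr\rangle \geq \bigl(|x|^{2r}+|y|^{2r}\bigr)|x-y|^2.\]
Expanding the left side as $|x|^{2r+2}+|y|^{2r+2}-(|x|^{2r}+|y|^{2r})\langle x,y\rangle$ and the right side using $|x-y|^2=|x|^2-2\langle x,y\rangle+|y|^2$, the $\langle x,y\rangle$ terms cancel and the inequality becomes
\[|x|^{2r+2}+|y|^{2r+2}\geq |x|^{2r}|y|^2+|y|^{2r}|x|^2,\]
i.e.\ $(|x|^{2r}-|y|^{2r})(|x|^2-|y|^2)\geq 0$. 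Both factors carry the sign of $|x|-|y|$, so this holds. Multiplying through by $(2r+2)\eta/2=\eta(r+1)$ delivers the bound with $c_1=\eta(r+1)$.

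Adding the two contributions yields the required inequality. There is no real obstacle: the only non-routine step is spotting the cancellation of the $\langle x,y\rangle$ terms, which reduces the convexity estimate for $|x|^{2r+2}$ to the one-line rearrangement inequality above. Note that the assumption $2r>l$ plays no role in the pointwise bound itself; it is only needed downstream so that the positive $|x|^{2r}$ term dominates the negative $|x|^l$ term at infinity, which is what allows the lemma to feed into Theorem \ref{lemma-comparisonying} via Lemma \ref{flammanionconnection}.
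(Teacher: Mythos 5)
Your proof is correct and follows essentially the same route as the paper: Cauchy--Schwarz handles the $g$-term, and the regularizer reduces to the rearrangement inequality $(|x|^{2r}-|y|^{2r})(|x|^2-|y|^2)\geq 0$ via the same expansion and cancellation of the $\langle x,y\rangle$ terms. One small slip: the expansion of $2\langle |x|^{2r}x-|y|^{2r}y,x-y\rangle$ should read $2|x|^{2r+2}+2|y|^{2r+2}-2(|x|^{2r}+|y|^{2r})\langle x,y\rangle$ (you dropped a factor of $2$ on the diagonal terms), but the resulting inequality $|x|^{2r+2}+|y|^{2r+2}\geq |x|^{2r}|y|^2+|y|^{2r}|x|^2$ is the right one and the argument goes through.
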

    \begin{proof}
        Let $f(x)=|x|^{2r+2}.$ Then $\nabla f(x)= 2(r+1) |x|^{2r} x.$
        Writing \[\begin{aligned}
            \langle \nabla f(x)-\nabla f(y),x-y\rangle&=\langle \nabla f(x),x\rangle +\langle\nabla f(y),y\rangle-\langle \nabla f(x),y\rangle - \langle \nabla f(y),x \rangle\\&= (2r+2) (|x|^{2r+2}+|y|^{2r+2})- (r+1)(|x|^{2r}+|y|^{2r})2\langle x,y\rangle\\&=(2r+2) (|x|^{2r+2}+|y|^{2r+2})\\&+ (r+1)(|x|^{2r}+|y|^{2r})\left( |x-y|^2 -|x|^2-|y|^2\right)
            \\&=(r+1)\left(|x|^{2r+2}+|y|^{2r+2}-|x|^{2r}|y|^{2}-|y|^{2r}|x|^{2}\right) \\&+ (r+1)(|x|^{2r}+|y|^{2r})|x-y|^2.
        \end{aligned}\]
        Since \[\begin{aligned}
            |x|^{2r+2}+|y|^{2r+2}-|x|^{2r}|y|^{2r+2}-|y|^{2r}|x|^{2r+2}&=|x|^{2}(|x|^{2r}-|y|^{2r})-|y|^2(|x|^{2r}-|y|^{2r})\\&=
        (|x|^2-|y|^2)(|x|^{2r}-|y|^{2r})\\&\geq 0,
        \end{aligned}\]
        one deduces
        \begin{equation}
            \langle \nabla f(x)-\nabla f(y),x-y\rangle \geq (r+1)(|x|^{2r}+|y|^{2r})|x-y|^2 \quad \forall x,y \in \mathbb{R}^d.
        \end{equation}
       Noting that by the gradient local Lipschitz assumption on $g$, there holds
        \[\langle \nabla g(x)- \nabla g(y),x-y\rangle\geq -L(1+|x|^l+|y|^l) |x-y|^2 \quad \forall x,y \in \mathbb{R}^d,\] the result immedately follows.
    \end{proof}
    \section{Acknowledgements}
    The authors would like to express their gratidute to Mufan Bill Li for the useful discussions.
\bibliography{referencenew}

\begin{thebibliography}{44}
\providecommand{\natexlab}[1]{#1}
\providecommand{\url}[1]{\texttt{#1}}
\expandafter\ifx\csname urlstyle\endcsname\relax
  \providecommand{\doi}[1]{doi: #1}\else
  \providecommand{\doi}{doi: \begingroup \urlstyle{rm}\Url}\fi

\bibitem[Aida and Stroock(1994)]{aida1994moment}
S.~Aida and D.~Stroock.
\newblock Moment estimates derived from poincar{\'e} and logarithmic sobolev inequalities.
\newblock \emph{Mathematical Research Letters}, 1\penalty0 (1):\penalty0 75--86, 1994.

\bibitem[Bakry and {\'E}mery(2006)]{bakry2006diffusions}
D.~Bakry and M.~{\'E}mery.
\newblock Diffusions hypercontractives.
\newblock In \emph{S{\'e}minaire de Probabilit{\'e}s XIX 1983/84: Proceedings}, pages 177--206. Springer, 2006.

\bibitem[Balasubramanian et~al.(2022)Balasubramanian, Chewi, Erdogdu, Salim, and Zhang]{balasubramanian2022towards}
K.~Balasubramanian, S.~Chewi, M.~A. Erdogdu, A.~Salim, and S.~Zhang.
\newblock Towards a theory of non-log-concave sampling: first-order stationarity guarantees for langevin monte carlo.
\newblock In \emph{Conference on Learning Theory}, pages 2896--2923. PMLR, 2022.

\bibitem[Barkhagen et~al.(2021)Barkhagen, Chau, Moulines, R{\'a}sonyi, Sabanis, and Zhang]{convex}
M.~Barkhagen, N.~H. Chau, {\'E}.~Moulines, M.~R{\'a}sonyi, S.~Sabanis, and Y.~Zhang.
\newblock On stochastic gradient langevin dynamics with dependent data streams in the logconcave case.
\newblock \emph{Bernoulli}, 27\penalty0 (1):\penalty0 1--33, 2021.

\bibitem[Brosse et~al.(2019)Brosse, Durmus, Moulines, and Sabanis]{tula}
N.~Brosse, A.~Durmus, {\'E}.~Moulines, and S.~Sabanis.
\newblock The tamed unadjusted {L}angevin algorithm.
\newblock \emph{Stochastic Processes and their Applications}, 129\penalty0 (10):\penalty0 3638--3663, 2019.

\bibitem[Chau et~al.(2021)Chau, Moulines, R{\'a}sonyi, Sabanis, and Zhang]{nonconvex}
N.~H. Chau, {\'E}.~Moulines, M.~R{\'a}sonyi, S.~Sabanis, and Y.~Zhang.
\newblock On stochastic gradient langevin dynamics with dependent data streams: The fully nonconvex case.
\newblock \emph{SIAM Journal on Mathematics of Data Science}, 3\penalty0 (3):\penalty0 959--986, 2021.

\bibitem[Cheng et~al.(2018)Cheng, Chatterji, Abbasi-Yadkori, Bartlett, and Jordan]{berkeley}
X.~Cheng, N.~S. Chatterji, Y.~Abbasi-Yadkori, P.~L. Bartlett, and M.~I. Jordan.
\newblock Sharp convergence rates for {L}angevin dynamics in the nonconvex setting.
\newblock \emph{arXiv preprint arXiv:1805.01648}, 2018.

\bibitem[Chewi et~al.(2021)Chewi, Erdogdu, Li, Shen, and Zhang]{chewi2021analysis}
S.~Chewi, M.~A. Erdogdu, M.~B. Li, R.~Shen, and M.~Zhang.
\newblock Analysis of langevin monte carlo from poincar$\backslash$'e to log-sobolev.
\newblock \emph{arXiv preprint arXiv:2112.12662}, 2021.

\bibitem[Dalalyan(2017)]{dalalyan2017theoretical}
A.~S. Dalalyan.
\newblock Theoretical guarantees for approximate sampling from smooth and log-concave densities.
\newblock \emph{Journal of the Royal Statistical Society: Series B (Statistical Methodology)}, 79\penalty0 (3):\penalty0 651--676, 2017.

\bibitem[Durmus and Moulines(2017)]{durmus2017nonasymptotic}
A.~Durmus and E.~Moulines.
\newblock Nonasymptotic convergence analysis for the unadjusted {L}angevin algorithm.
\newblock \emph{The Annals of Applied Probability}, 27\penalty0 (3):\penalty0 1551--1587, 2017.

\bibitem[Durmus and Moulines(2019)]{durmus2019high}
A.~Durmus and E.~Moulines.
\newblock High-dimensional {B}ayesian inference via the unadjusted {L}angevin algorithm.
\newblock \emph{Bernoulli}, 25\penalty0 (4A):\penalty0 2854--2882, 2019.

\bibitem[Eberle et~al.(2019)Eberle, Guillin, and Zimmer]{Harris}
A.~Eberle, A.~Guillin, and R.~Zimmer.
\newblock Quantitative {H}arris-type theorems for diffusions and {M}c{K}ean--{V}lasov processes.
\newblock \emph{Transactions of the American Mathematical Society}, 371\penalty0 (10):\penalty0 7135--7173, 2019.

\bibitem[Erdogdu and Hosseinzadeh(2021)]{erdogdu2021convergence}
M.~A. Erdogdu and R.~Hosseinzadeh.
\newblock On the convergence of langevin monte carlo: The interplay between tail growth and smoothness.
\newblock In \emph{Conference on Learning Theory}, pages 1776--1822. PMLR, 2021.

\bibitem[Erdogdu et~al.(2022)Erdogdu, Hosseinzadeh, and Zhang]{erdogdu2022convergence}
M.~A. Erdogdu, R.~Hosseinzadeh, and S.~Zhang.
\newblock Convergence of langevin monte carlo in chi-squared and r{\'e}nyi divergence.
\newblock In \emph{International Conference on Artificial Intelligence and Statistics}, pages 8151--8175. PMLR, 2022.

\bibitem[Freidlin(1985)]{freidlin1985functional}
M.~I. Freidlin.
\newblock \emph{Functional integration and partial differential equations}.
\newblock Princeton university press, 1985.

\bibitem[Gordon(1972)]{gordon1972diffeomorphisms}
W.~B. Gordon.
\newblock On the diffeomorphisms of euclidean space.
\newblock \emph{The American Mathematical Monthly}, 79\penalty0 (7):\penalty0 755--759, 1972.

\bibitem[Gross(1975)]{gross1975logarithmic}
L.~Gross.
\newblock Logarithmic sobolev inequalities.
\newblock \emph{American Journal of Mathematics}, 97\penalty0 (4):\penalty0 1061--1083, 1975.

\bibitem[Holley and Stroock(1987)]{holley1986logarithmic}
R.~Holley and D.~W. Stroock.
\newblock Logarithmic sobolev inequalities and stochastic ising models.
\newblock \emph{Journal of Statistical Physics}, 46:\penalty0 1159–1194, 1987.

\bibitem[Hutzenthaler et~al.(2011)Hutzenthaler, Jentzen, and Kloeden]{hutzenthaler2011}
M.~Hutzenthaler, A.~Jentzen, and P.~E. Kloeden.
\newblock Strong and weak divergence in finite time of euler{\textquoteright}s method for stochastic differential equations with non-globally lipschitz continuous coefficients.
\newblock \emph{Proceedings of the Royal Society of London A: Mathematical, Physical and Engineering Sciences}, 467\penalty0 (2130):\penalty0 1563--1576, 2011.
\newblock ISSN 1364-5021.

\bibitem[Hutzenthaler et~al.(2012)Hutzenthaler, Jentzen, and Kloeden]{hutzenthaler2012}
M.~Hutzenthaler, A.~Jentzen, and P.~E. Kloeden.
\newblock Strong convergence of an explicit numerical method for sdes with nonglobally lipschitz continuous coefficients.
\newblock \emph{Ann. Appl. Probab.}, 22\penalty0 (4):\penalty0 1611--1641, 08 2012.

\bibitem[Johnston and Sabanis(2024)]{johnston2024strongly}
T.~Johnston and S.~Sabanis.
\newblock A strongly monotonic polygonal euler scheme.
\newblock \emph{Journal of Complexity}, 80:\penalty0 101801, 2024.

\bibitem[Johnston et~al.(2023)Johnston, Lytras, and Sabanis]{johnston2023kinetic}
T.~Johnston, I.~Lytras, and S.~Sabanis.
\newblock Kinetic langevin mcmc sampling without gradient lipschitz continuity--the strongly convex case.
\newblock \emph{arXiv preprint arXiv:2301.08039}, 2023.

\bibitem[Lamberton and Lapeyre(2011)]{lamberton2011introduction}
D.~Lamberton and B.~Lapeyre.
\newblock \emph{Introduction to stochastic calculus applied to finance}.
\newblock CRC press, 2011.

\bibitem[Ledoux(1999)]{Ledoux}
M.~Ledoux.
\newblock \emph{Concentration of measure and logarithmic Sobolev inequalities}.
\newblock Springer, 1999.

\bibitem[Li and Erdogdu(2020)]{li2020riemannian}
M.~B. Li and M.~A. Erdogdu.
\newblock Riemannian langevin algorithm for solving semidefinite programs.
\newblock \emph{arXiv preprint arXiv:2010.11176}, 2020.

\bibitem[Lim and Sabanis(2021)]{lim2021polygonal}
D.-Y. Lim and S.~Sabanis.
\newblock Polygonal unadjusted langevin algorithms: Creating stable and efficient adaptive algorithms for neural networks.
\newblock \emph{arXiv preprint arXiv:2105.13937}, 2021.

\bibitem[Lim et~al.(2023)Lim, Neufeld, Sabanis, and Zhang]{lim2021non}
D.-Y. Lim, A.~Neufeld, S.~Sabanis, and Y.~Zhang.
\newblock {Non-asymptotic estimates for TUSLA algorithm for non-convex learning with applications to neural networks with ReLU activation function}.
\newblock \emph{IMA Journal of Numerical Analysis}, page drad038, 2023.

\bibitem[Lovas et~al.(2023)Lovas, Lytras, R{\'a}sonyi, and Sabanis]{TUSLA}
A.~Lovas, I.~Lytras, M.~R{\'a}sonyi, and S.~Sabanis.
\newblock Taming neural networks with tusla: Nonconvex learning via adaptive stochastic gradient langevin algorithms.
\newblock \emph{SIAM Journal on Mathematics of Data Science}, 5\penalty0 (2):\penalty0 323--345, 2023.

\bibitem[Ma et~al.(2019)Ma, Chen, Jin, Flammarion, and Jordan]{ma2019sampling}
Y.-A. Ma, Y.~Chen, C.~Jin, N.~Flammarion, and M.~I. Jordan.
\newblock Sampling can be faster than optimization.
\newblock \emph{Proceedings of the National Academy of Sciences}, 116\penalty0 (42):\penalty0 20881--20885, 2019.

\bibitem[Majka et~al.(2020)Majka, Mijatovi{\'c}, and Szpruch]{majka2020nonasymptotic}
M.~B. Majka, A.~Mijatovi{\'c}, and {\L}.~Szpruch.
\newblock Nonasymptotic bounds for sampling algorithms without log-concavity.
\newblock \emph{The Annals of Applied Probability}, 30\penalty0 (4):\penalty0 1534--1581, 2020.

\bibitem[Menz and Schlichting(2014)]{menz2014poincare}
G.~Menz and A.~Schlichting.
\newblock Poincar{\'e} and logarithmic sobolev inequalities by decomposition of the energy landscape.
\newblock \emph{The Annals of Probability}, 42\penalty0 (5):\penalty0 1809--1884, 2014.

\bibitem[Mou et~al.(2022)Mou, Flammarion, Wainwright, and Bartlett]{mou2022improved}
W.~Mou, N.~Flammarion, M.~J. Wainwright, and P.~L. Bartlett.
\newblock Improved bounds for discretization of langevin diffusions: Near-optimal rates without convexity.
\newblock \emph{Bernoulli}, 28\penalty0 (3):\penalty0 1577--1601, 2022.

\bibitem[Neufeld et~al.(2022)Neufeld, En, and Zhang]{neufeld2022non}
A.~Neufeld, M.~N.~C. En, and Y.~Zhang.
\newblock Non-asymptotic convergence bounds for modified tamed unadjusted langevin algorithm in non-convex setting.
\newblock \emph{arXiv preprint arXiv:2207.02600}, 2022.

\bibitem[Nguyen et~al.(2021)Nguyen, Dang, and Chen]{nguyen2021unadjusted}
D.~Nguyen, X.~Dang, and Y.~Chen.
\newblock Unadjusted langevin algorithm for non-convex weakly smooth potentials.
\newblock \emph{arXiv preprint arXiv:2101.06369}, 2021.

\bibitem[Pardoux and R{\^a}șcanu(2014)]{pardoux2014stochastic}
E.~Pardoux and A.~R{\^a}șcanu.
\newblock \emph{Stochastic differential equations, Backward SDEs, Partial differential equations}, volume~69.
\newblock Springer, 2014.

\bibitem[Petersen et~al.(2008)Petersen, Pedersen, et~al.]{petersen2008matrix}
K.~B. Petersen, M.~S. Pedersen, et~al.
\newblock The matrix cookbook.
\newblock \emph{Technical University of Denmark}, 7\penalty0 (15):\penalty0 510, 2008.

\bibitem[Raginsky et~al.(2017)Raginsky, Rakhlin, and Telgarsky]{raginsky}
M.~Raginsky, A.~Rakhlin, and M.~Telgarsky.
\newblock Non-convex learning via {S}tochastic {G}radient {L}angevin {D}ynamics: a nonasymptotic analysis.
\newblock In \emph{Conference on Learning Theory}, pages 1674--1703, 2017.

\bibitem[Sabanis(2013)]{tamed-euler}
S.~Sabanis.
\newblock A note on tamed euler approximations.
\newblock \emph{Electron. Commun. Probab.}, 18\penalty0 (47):\penalty0 1--10, 2013.

\bibitem[Sabanis(2016)]{SabanisAoAP}
S.~Sabanis.
\newblock Euler approximations with varying coefficients: the case of superlinearly growing diffusion coefficients.
\newblock \emph{Ann. Appl. Probab.}, 26\penalty0 (4):\penalty0 2083--2105, 2016.

\bibitem[Sabanis and Zhang(2019)]{hola}
S.~Sabanis and Y.~Zhang.
\newblock Higher order {L}angevin {M}onte {C}arlo algorithm.
\newblock \emph{Electronic Journal of Statistics}, 13\penalty0 (2):\penalty0 3805--3850, 2019.

\bibitem[Vempala and Wibisono(2019)]{vempala2019rapid}
S.~Vempala and A.~Wibisono.
\newblock Rapid convergence of the unadjusted langevin algorithm: Isoperimetry suffices.
\newblock \emph{Advances in neural information processing systems}, 32, 2019.

\bibitem[Wainwright(2019)]{wainwright2019high}
M.~J. Wainwright.
\newblock \emph{High-dimensional statistics: A non-asymptotic viewpoint}, volume~48.
\newblock Cambridge university press, 2019.

\bibitem[Yan(2014)]{yan2012extension}
M.~Yan.
\newblock Extension of convex function.
\newblock \emph{Journal of Convex Analysis}, 21\penalty0 (4):\penalty0 965--987, 2014.

\bibitem[Zhang et~al.(2023)Zhang, Akyildiz, Damoulas, and Sabanis]{zhang2023nonasymptotic}
Y.~Zhang, {\"O}.~D. Akyildiz, T.~Damoulas, and S.~Sabanis.
\newblock Nonasymptotic estimates for stochastic gradient langevin dynamics under local conditions in nonconvex optimization.
\newblock \emph{Applied Mathematics \& Optimization}, 87\penalty0 (2):\penalty0 25, 2023.

\end{thebibliography}
\end{document}